\documentclass{daj}

\dajAUTHORdetails{%
  title = {Boolean Functions on $S_n$ Which Are Nearly Linear}, 
  author = {Yuval Filmus},
  plaintextauthor = {Yuval Filmus},
    %
    %
  plaintexttitle = {Boolean Functions on S_n Which Are Nearly Linear}, 
    %
    %
    %
   %
  keywords = {analysis of boolean functions, symmetric group},
}   

\dajEDITORdetails{%
   year={2021},
   number={25},
   received={19 July 2021},   
   published={13 December 2021},  
   doi={10.19086/da.30186},       
}   

\usepackage{amsmath,amssymb,amsfonts,amsthm,enumerate,hyperref,cleveref}
\usepackage{thmtools,thm-restate}
\usepackage{fullpage,color}

\providecommand{\RR}{\mathbb{R}}
\providecommand{\ZZ}{\mathbb{Z}}
\providecommand{\NN}{\mathbb{N}}
\DeclareMathOperator*{\E}{\mathbb{E}}
\DeclareMathOperator{\dist}{dist}
\DeclareMathOperator{\round}{round}

\providecommand{\cC}{\mathcal{C}}
\providecommand{\cL}{\mathcal{L}}
\providecommand{\va}{\mathbf{a}}
\providecommand{\vb}{\mathbf{b}}
\providecommand{\id}{\mathrm{id}}

\newtheorem{theorem}{Theorem}[section]
\newtheorem{lemma}[theorem]{Lemma}
\newtheorem{corollary}[theorem]{Corollary}
\newtheorem{conjecture}[theorem]{Conjecture}

\begin{document}

\begin{frontmatter}[classification=text]

\title{Boolean Functions on $S_n$ \\ Which Are Nearly Linear} 

\author[yuvalf]{Yuval Filmus\thanks{This project has received funding from the European Union's Horizon 2020 research and innovation programme under grant agreement No~802020-ERC-HARMONIC.}}


\begin{abstract}
We show that if $f\colon S_n \to \{0,1\}$ is $\epsilon$-close to linear in $L_2$ and $\E[f] \leq 1/2$ then $f$ is $O(\epsilon)$-close to a union of ``mostly disjoint'' cosets, and moreover this is sharp: any such union is close to linear. This constitutes a sharp Friedgut--Kalai--Naor theorem for the symmetric group.

Using similar techniques, we show that if $f\colon S_n \to \RR$ is linear, $\Pr[f \notin \{0,1\}] \leq \epsilon$, and $\Pr[f = 1] \leq 1/2$, then $f$ is $O(\epsilon)$-close to a union of mostly disjoint cosets, and this is also sharp; and that if $f\colon S_n \to \RR$ is linear and $\epsilon$-close to $\{0,1\}$ in $L_\infty$ then $f$ is $O(\epsilon)$-close in $L_\infty$ to a union of disjoint cosets.
\end{abstract}
\end{frontmatter}

\section{Introduction} \label{sec:introduction}

Consider a Boolean function $f\colon S_n \to \{0,1\}$, where $S_n$ is the symmetric group on $n$ elements, consisting of all permutations of $[n] = \{1,\ldots,n\}$. We say that $f$ is \emph{linear} if it can be written as
\[
 f = \sum_{i=1}^n \sum_{j=1}^n c_{i,j} x_{i,j},
\]
where $x_{i,j} \in \{0,1\}$ is the indicator for the input permutation sending $i$ to $j$. Ellis, Friedgut and Pilpel~\cite{EFP11} showed that if $f$ is a Boolean linear function on $S_n$ then $f$ must be a \emph{dictator}: either $f(\pi)$ depends only on $\pi(i)$ for some $i \in [n]$, or it depends only on $\pi^{-1}(j)$ for some $j \in [n]$ (see~\cite{DFLLV2021} for an alternative proof).

In this paper, we answer several relaxed versions of the same question:
\begin{enumerate}
 \item What is the structure of a Boolean function $f\colon S_n \to \{0,1\}$ which is $\epsilon$-close to linear, in the sense that there exists a linear function $g\colon S_n \to \RR$ satisfying $\E[(f-g)^2] \leq \epsilon$?
 \item What is the structure of a linear function $f\colon S_n \to \RR$ which is $\epsilon$-close to Boolean, in the sense that $\E[\dist(f,\{0,1\})^2] \leq \epsilon$? (Here $\dist(f,\{0,1\}) = \min(|f-0|, |f-1|)$.)
 \item What is the structure of a linear function $f\colon S_n \to \RR$ which is $\epsilon$-close to Boolean, in the sense that $\Pr[f \notin \{0,1\}] \leq \epsilon$?
 \item What is the structure of a linear function $f\colon S_n \to \RR$ which is $\epsilon$-close to Boolean, in the sense that $\dist(f(\pi), \{0,1\}) \leq \epsilon$ for all $\pi \in S_n$?
\end{enumerate}
The first two questions measure distance in the $L_2$~metric, the third one measures distance in the $L_0$~metric, and the fourth one measures distance in the $L_\infty$~metric. Below, all notions of distance are $L_2$ unless explicitly stated.

The answer to the first three questions is similar, and involves the notion of an $\epsilon$-disjoint family of cosets. A \emph{coset} $(i,j)$ consists of all permutations sending $i$ to $j$, for some $i,j \in [n]$.
Two cosets $(i,j),(k,\ell)$ are \emph{disjoint} if the corresponding sets of permutations are disjoint; this happens when either $i = k$ or $j = \ell$ (but not both).
An \emph{$\epsilon$-disjoint family of cosets} is a collection $\cC$ of $O(n)$ cosets such that the number of pairs $(i,j),(k,\ell) \in \cC$ which are not disjoint is $O(\epsilon n^2)$.

We can now state our main structure theorems, answering all of the questions above. In the first three theorems, we assume that either $\E[f] \leq 1/2$ or $\Pr[f=1] \leq 1/2$. To obtain a theorem covering all possible $f$, we simply need to consider $1-f$ when $\E[f] > 1/2$ or $\Pr[f=1] > 1/2$.

\begin{restatable}{theorem}{mainone} \label{thm:main-1}
 If $f\colon S_n \to \{0,1\}$ is $\epsilon$-close to linear and $\E[f] \leq 1/2$ then there exists an $\epsilon$-disjoint family of cosets $\cC$ such that $f$ is $\epsilon$-close to $g = \max_{(i,j) \in \cC} x_{i,j}$, that is, $\Pr[f \neq g] = O(\epsilon)$.
 
 Conversely, if $\cC$ is an $\epsilon$-disjoint family of cosets then $g = \max_{(i,j) \in \cC} x_{i,j}$ is $O(\epsilon)$-close to linear.
\end{restatable}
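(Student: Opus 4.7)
For the converse direction, set $\tilde g := \sum_{(i,j) \in \cC} x_{i,j}$, a linear function, and let $k(\pi) := \sum_{(i,j)\in\cC} x_{i,j}(\pi)$ count the cosets of $\cC$ containing $\pi$. Then $\tilde g(\pi) - \max_{(i,j)\in\cC} x_{i,j}(\pi) = (k(\pi)-1)_+$, and from $(k-1)^2 \leq k(k-1) = 2\binom{k}{2}$ for $k \geq 1$ we obtain
\[
\bigl\|\tilde g - \max_{(i,j)\in\cC} x_{i,j}\bigr\|_2^2 \;\leq\; 2\,\E\!\bigl[\tbinom{k(\pi)}{2}\bigr] \;=\; 2\!\!\sum_{\{(i,j),(k,\ell)\}\subseteq\cC}\!\!\Pr[\pi(i)=j,\pi(k)=\ell].
\]
Each summand equals $1/(n(n-1))$ for non-disjoint pairs and $0$ otherwise, so the $\epsilon$-disjointness of $\cC$ bounds the right-hand side by $O(\epsilon)$.

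For the forward direction, let $g$ be the orthogonal $L_2$-projection of $f$ onto the space of linear functions, so that $\|f - g\|_2^2 \leq \epsilon$ and the error $e := f - g$ is orthogonal to every $x_{i,j}$. The conditional marginals $p_{i,j} := \E[f \mid \pi(i)=j] = n\E[f \cdot x_{i,j}] = n\E[g \cdot x_{i,j}]$ are therefore determined by $g$, lie in $[0,1]$, and satisfy $\sum_j p_{i,j} = \sum_i p_{i,j} = n\alpha$ with $\alpha := \E[f] \leq 1/2$. Decomposing $g = \alpha + g_1$ with $g_1$ in the $(n-1,1)$-isotypic component and writing $g_1 = \sum c_{i,j} x_{i,j}$ in its unique doubly-centered form, a direct calculation yields $c_{i,j} = \tfrac{n-1}{n}(p_{i,j}-\alpha)$ and the Plancherel-type identity $\|g_1\|_2^2 = \tfrac{n-1}{n^2}\sum(p_{i,j}-\alpha)^2$. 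Combined with $\|g_1\|_2^2 = \V[f] - \|e\|_2^2 \in [\alpha(1-\alpha)-\epsilon,\alpha(1-\alpha)]$, this pins $\sum(p_{i,j}-\alpha)^2$ within an $O(n\epsilon)$ interval around $\tfrac{n^2}{n-1}\alpha(1-\alpha) = \Theta(n)$.

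I would set $\cC := \{(i,j) : p_{i,j} \geq 3/4\}$. Since $\alpha \leq 1/2$, each $(i,j)\in\cC$ contributes at least $(3/4-\alpha)^2 \geq 1/16$ to $\sum(p-\alpha)^2$, immediately yielding $|\cC| = O(n)$. The remaining claims are (b) $\cC$ has $O(\epsilon n^2)$ non-disjoint pairs, and (c) $\Pr[f \neq \max_{(i,j)\in\cC}x_{i,j}] = O(\epsilon)$. A direct expansion gives
\[
\bigl\|f - \textstyle\sum_{(i,j)\in\cC}x_{i,j}\bigr\|_2^2 \;=\; \bigl(\alpha - \tfrac{|\cC|}{n}\bigr) + \tfrac{2}{n}\sum_{(i,j)\in\cC}(1-p_{i,j}) + \tfrac{2P}{n(n-1)},
\]
where $P$ counts non-disjoint pairs in $\cC$. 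Using $\Pr[f \neq h] \leq \|f-h\|_2^2$ for Boolean $h$, the triangle inequality through $\sum_{(i,j)\in\cC}x_{i,j}$, and the converse direction (which handles $\sum x_{i,j} \mapsto \max x_{i,j}$ once (b) is known), claim (c) reduces to showing that this $L_2$-expression is $O(\epsilon)$.

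The main obstacle is the bimodality needed to control each of the three terms: $|\cC|/n \approx \alpha$, $\sum_{(i,j)\in\cC}(1-p_{i,j}) = O(n\epsilon)$, and $P = O(\epsilon n^2)$. The aggregate $L_2$-estimate on $\sum(p_{i,j}-\alpha)^2$ alone cannot certify this pointwise picture. I plan to exploit the Boolean identity $f^2 = f$ beyond first order, either (i) by analyzing the second-order marginals $q_{(i,j),(k,\ell)} := \E[f \mid \pi(i)=j, \pi(k)=\ell]$ via a level-two Plancherel-type identity, showing they cluster near $1$ on non-disjoint pairs in $\cC$ so that deviations aggregate to an $O(\epsilon n^2)$ count for (b) and an $O(\epsilon n)$ defect for the $(1-p_{i,j})$ sum; or (ii) by invoking a level-one hypercontractive bound $\|g_1\|_4 = O(\|g_1\|_2)$ and comparing with $\|f\|_4^4 = \alpha$, in the spirit of the FKN theorem on the hypercube, to force the coefficients $c_{i,j}$ into a dictator-like pattern.
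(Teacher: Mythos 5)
Your converse argument is correct and, while phrased more directly, is essentially the same computation as the paper's: both bound $\E[\dist(\tilde g,\{0,1\})^2]$ by $\E[\tilde g(\tilde g - 1)] = 2P/(n(n-1))$ where $P$ counts non-disjoint pairs, and then observe that $\max_{(i,j)\in\cC} x_{i,j} = \round(\tilde g,\{0,1\})$. That part is fine.

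The forward direction, however, has a genuine gap, and you have correctly identified exactly where it is. Your setup — the orthogonal projection, the marginals $p_{i,j} = \E[f \mid \pi(i)=j]$, the threshold set $\cC=\{(i,j):p_{i,j}\geq 3/4\}$, and the exact identity
\[
\bigl\|f - \textstyle\sum_{(i,j)\in\cC}x_{i,j}\bigr\|_2^2 \;=\; \bigl(\alpha - \tfrac{|\cC|}{n}\bigr) + \tfrac{2}{n}\sum_{(i,j)\in\cC}(1-p_{i,j}) + \tfrac{2P}{n(n-1)}
\]
— is all correct, and it cleanly reduces the theorem to three claims. But those three claims collectively assert the bimodality of the $p_{i,j}$, and that \emph{is} the theorem. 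The aggregate Plancherel bound $\sum(p_{i,j}-\alpha)^2 = \Theta(n)$ that you derive is compatible with many non-bimodal configurations (e.g.\ $\Theta(n)$ marginals spread around $1/2$), so it cannot by itself certify that $|\cC|/n \approx \alpha$, that the in-$\cC$ marginals are close to $1$ with $O(n\epsilon)$ total defect, or that $P = O(\epsilon n^2)$. Your two proposed continuations are speculative: route (ii) runs into the difficulty that hypercontractivity on $S_n$ does not come with cube-quality constants for the degree-$1$ part in the regime of interest, and route (i) (controlling second-order marginals) is where all the real work would go. The paper avoids this head-on analysis of the $p_{i,j}$ entirely: it restricts $f$ to the sub-cubes $S_{\va,\vb}$, applies the classical FKN theorem there (\Cref{cor:FKN}) to get near-integrality of the cube-level coefficients $c_{i_1,j_1}+c_{i_2,j_2}-c_{i_1,j_2}-c_{i_2,j_1}$, and then painstakingly rebuilds an integer sparse representation (\Cref{lem:L2-sparse}), recenters it to be hit rarely by a random permutation (\Cref{lem:sporadic}), and only then extracts $\cC$ and the non-disjointness count (\Cref{lem:sum-of-cosets}). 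Your reduction is a reasonable framing, but as written the forward direction is incomplete: the bimodality you flag as the ``main obstacle'' is the content of the theorem, not a remaining technicality.
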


\begin{restatable}{theorem}{maintwo} \label{thm:main-2}
 If $f\colon S_n \to \RR$ is linear, $\epsilon$-close to Boolean, and satisfies $\E[f] \leq 1/2$ then there exists an $\epsilon$-disjoint family of cosets $\cC$ such that $f$ is $O(\epsilon)$-close to $g = \sum_{(i,j) \in \cC} x_{i,j}$, that is, $\E[(f-g)^2] = O(\epsilon)$.
 
 Conversely, if $\cC$ is an $\epsilon$-disjoint family of cosets then $g = \sum_{(i,j) \in \cC} x_{i,j}$ is $O(\epsilon)$-close to Boolean.
\end{restatable}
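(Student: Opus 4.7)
The plan is to reduce Theorem~\ref{thm:main-2} to Theorem~\ref{thm:main-1} by rounding $f$ pointwise to a Boolean function $h$ and then invoking the Boolean structure theorem on $h$. I would handle the converse direction first, since it also supplies a lemma used in the forward direction.

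For the converse direction, given an $\epsilon$-disjoint family $\cC$, I would set $g = \sum_{(i,j) \in \cC} x_{i,j}$ and $m = \max_{(i,j)\in\cC} x_{i,j}$, and show that $g$ is $O(\epsilon)$-close in $L_2$ to the Boolean function $m$. The key identity is
\[
 \E[g(g-1)] = \sum_{(i,j) \neq (k,\ell) \in \cC} \E[x_{i,j} x_{k,\ell}],
\]
where disjoint pairs contribute $0$ because their cosets do not intersect, while each ordered pair of distinct non-disjoint cosets (which satisfies $i \neq k$ and $j \neq \ell$) contributes $1/(n(n-1))$. The $\epsilon$-disjointness assumption gives $O(\epsilon n^2)$ such pairs, hence $\E[g(g-1)] = O(\epsilon)$. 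Since $(g-m)^2 \leq g(g-1)$ pointwise (both vanish on $\{g=0\}$, and on $\{g \geq 1\}$ we have $(g-1)^2 = g(g-1) - (g-1) \leq g(g-1)$), we conclude $\E[(g-m)^2] = O(\epsilon)$, which establishes the converse.

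For the forward direction, let $h(\pi) = \round(f(\pi))$ be the pointwise rounding of $f$ to $\{0,1\}$. Then $|f-h| = \dist(f,\{0,1\})$, so $\E[(f-h)^2] \leq \epsilon$: the Boolean function $h$ is $\epsilon$-close to the linear function $f$. Moreover $|\E[h] - \E[f]| \leq \sqrt{\epsilon}$, so $\E[h] \leq 1/2 + O(\sqrt{\epsilon})$. Applying Theorem~\ref{thm:main-1} to $h$ produces an $\epsilon$-disjoint family $\cC$ with $\Pr[h \neq m] = O(\epsilon)$, where $m = \max_{(i,j)\in\cC} x_{i,j}$. Setting $g = \sum_{(i,j)\in\cC} x_{i,j}$, the $L_2$ triangle inequality applied to $f - g = (f - h) + (h - m) + (m - g)$ bounds each summand by $O(\sqrt{\epsilon})$---the first by the rounding step, the second because $h,m \in \{0,1\}$ differ with probability $O(\epsilon)$, and the third by the converse direction already proven---so $\E[(f-g)^2] = O(\epsilon)$.

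The one genuinely delicate point I anticipate is the additive slack in $\E[h] \leq 1/2 + O(\sqrt{\epsilon})$, which is not quite the hypothesis $\E[h] \leq 1/2$ demanded by Theorem~\ref{thm:main-1}. I expect this to be resolvable either by shifting the rounding threshold slightly---so that $\E[h] \leq 1/2$ holds exactly while $\E[(f-h)^2]$ remains $O(\epsilon)$---or by checking that the proof of Theorem~\ref{thm:main-1} tolerates an additive $O(\sqrt{\epsilon})$ relaxation of its expectation hypothesis. Once that boundary case is handled, the remainder of the argument is a short chain of triangle inequalities, so the substantive content really is carried by Theorem~\ref{thm:main-1} and the second-moment calculation in the converse direction.
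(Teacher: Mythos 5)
Your converse argument is correct and is essentially the paper's (the converse part of Lemma~\ref{lem:sum-of-cosets}). The forward direction, however, has a real gap at precisely the point you flag, and it is not a removable boundary nuisance. Consider $f = (1-\sqrt{\epsilon})\sum_{j\in J} x_{1,j}$ with $|J| = n/(2(1-\sqrt{\epsilon}))$. Then $\E[f] = 1/2$ and $\E[\dist(f,\{0,1\})^2] \approx \epsilon/2$, so the hypotheses hold; but $f$ takes only the two values $0$ and $1-\sqrt{\epsilon}$, with $\Pr[f=1-\sqrt{\epsilon}] \approx 1/2 + \sqrt{\epsilon}/2$. Therefore \emph{every} Boolean $h'$ with $\E[h']\le 1/2$ must equal $0$ on a subset of $\{f = 1-\sqrt{\epsilon}\}$ of measure at least about $\sqrt{\epsilon}/2$, and each such point contributes $(1-\sqrt{\epsilon})^2 \approx 1$ to $\E[(f-h')^2]$, giving $\E[(f-h')^2]=\Omega(\sqrt{\epsilon})$. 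So neither the plain rounding $h=\round(f)$ nor any shifted-threshold variant of it can be fed into Theorem~\ref{thm:main-1} without losing a square root: you would only conclude an $O(\sqrt{\epsilon})$ bound, not the required $O(\epsilon)$.

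The underlying asymmetry is that rounding $f \mapsto \round(f)$ does not preserve expectation, while truncation $f \mapsto f^{\le 1}$ (passing from a Boolean $f$ to a nearby linear function) preserves $\E$ exactly. For this reason the paper proves Theorem~\ref{thm:main-2} first --- by a reduction to the cube, the classical FKN theorem, a ``sparse representation'' step, and a combinatorial ``sporadic representation'' argument --- and only then derives Theorem~\ref{thm:main-1} from it via the identity $\E[f^{\le 1}]=\E[f]$. Running the reduction in your direction is thus circular within the paper's architecture; and even granting an independent proof of Theorem~\ref{thm:main-1}, the mean drift above forces you, in the case $\E[h]>1/2$, to apply Theorem~\ref{thm:main-1} to $1-h$ and then argue that a nearly balanced $\epsilon$-disjoint family must concentrate on a single line, so that its complement is again a sum of cosets. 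That is precisely the content of the paper's Lemma~\ref{lem:constant-or-dictator}; without it or an equivalent, the forward direction does not close.
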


\begin{restatable}{theorem}{mainthree} \label{thm:main-3}
 If $f\colon S_n \to \RR$ is linear, $\epsilon$-close to Boolean in~$L_0$, and satisfies $\Pr[f=1] \leq 1/2$ then there exists an $\epsilon$-disjoint family of cosets $\cC$ such that $f$ is $O(\epsilon)$-close to $g = \sum_{(i,j) \in \cC} x_{i,j}$, that is, $\Pr[f \neq g] = O(\epsilon)$.
 
 Conversely, if $\cC$ is an $\epsilon$-disjoint family of cosets then $g = \sum_{(i,j) \in \cC} x_{i,j}$ is $O(\epsilon)$-close to Boolean in $L_0$.
\end{restatable}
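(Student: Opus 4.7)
Since $g = \sum_{(i,j) \in \cC} x_{ij}$ is a non-negative integer-valued function, $g(\pi) \notin \{0,1\}$ happens only when $g(\pi) \geq 2$, which requires $\pi$ to lie in two distinct cosets $(i,j), (k,\ell) \in \cC$; by the definition of disjointness this forces $i \neq k$ and $j \neq \ell$. Each such ``non-disjoint'' pair is jointly hit with probability $\frac{1}{n(n-1)}$, and $\cC$ contains at most $O(\epsilon n^2)$ such pairs, so a union bound gives $\Pr[g \notin \{0,1\}] = O(\epsilon)$.

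\textbf{Forward direction.} The plan is to reduce to Theorem~\ref{thm:main-2}. Set $B = \{\pi : f(\pi) \in \{0,1\}\}$, so $\Pr[\overline{B}] \leq \epsilon$. The key technical step is to promote the $L_0$ assumption to an $L_2$ estimate, namely $\E[\dist(f,\{0,1\})^2] = O(\epsilon)$. Once this is in hand, the bound $\E[f] \leq \Pr[f=1] + \E[|f|\,1_{\overline{B}}] \leq 1/2 + O(\sqrt{\epsilon})$ (Cauchy--Schwarz, using that $\|f\|_2 = O(1)$ as shown below) lets us apply Theorem~\ref{thm:main-2}, producing an $\epsilon$-disjoint family $\cC$ with $\E[(f-g)^2] = O(\epsilon)$ for $g = \sum_{(i,j) \in \cC} x_{ij}$. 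Since $g$ and $f|_{B}$ are both integer-valued, $|f-g| \geq 1$ on $B \cap \{f \neq g\}$, hence
\[
 \Pr[f \neq g] \leq \Pr[\overline{B}] + \Pr[B,\, f \neq g] \leq \epsilon + \E[(f-g)^2] = O(\epsilon),
\]
as required.

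\textbf{Main obstacle: the $L_2$ estimate.} Because $f$ is linear on $S_n$, it sits in the sum of the first two isotypic components of $L^2(S_n)$, and hypercontractivity for the symmetric group yields $\|f\|_p \leq C_p \|f\|_2$ for explicit constants $C_p$. Using that $f^2 = f$ on $B$ (so $\E[f^2 1_B] \leq \Pr[f=1] \leq 1/2$) together with a H\"older bound $\E[f^2 1_{\overline{B}}] \leq C_p^2 \|f\|_2^2\, \Pr[\overline{B}]^{1-2/p}$ gives $\|f\|_2 = O(1)$, whence
\[
 \E[\dist(f,\{0,1\})^2] \leq \E[(f^2 + (f-1)^2)\, 1_{\overline{B}}] = O(\epsilon^{1-2/p})
\]
for any $p \geq 2$. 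The naive choice $p=4$ yields only $O(\sqrt{\epsilon})$, and even optimizing $p \sim \log(1/\epsilon)$ leaves an $O(\epsilon \log(1/\epsilon))$ residue. Closing this gap is the delicate step: I would bootstrap by applying Theorem~\ref{thm:main-2} with the weaker bound to obtain a preliminary family $\cC_0$, exploiting the integrality of $g_0 = \sum_{\cC_0} x_{ij}$ together with $\Pr[f \notin \{0,1\}] \leq \epsilon$ to prune spurious non-disjoint cosets from $\cC_0$, and iterating to reach an $\epsilon$-disjoint family with the sharp $O(\epsilon)$ error. Alternatively one could try to establish, directly, a sub-Gaussian tail estimate for linear functions on $S_n$ sharp enough to avoid the $\sqrt{\epsilon}$ loss at one stroke.
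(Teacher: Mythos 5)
Your converse direction is correct and matches the paper's, which derives it from \Cref{lem:sum-of-cosets}.

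The forward direction has a genuine gap, which you identify but do not close. The $L_0\to L_2$ passage inherently loses a factor: H\"older together with hypercontractivity yields $\E[\dist(f,\{0,1\})^2]=O(\epsilon^{1-2/p})$, and even with $p$ optimized this is $O(\epsilon\log(1/\epsilon))$. Feeding such a bound into Theorem~\ref{thm:main-2} produces only $\Pr[f\neq g]=O(\epsilon\log(1/\epsilon))$, short of the required $O(\epsilon)$. The loss is intrinsic to this route, because the $L_0$ hypothesis controls the measure, not the magnitude, of $f$ off the good set, while $f$ is unbounded. Neither of your two repair ideas is yet an argument: the ``bootstrap'' does not say why a pruning step improves the exponent or why iterating converges to the sharp rate, and a sharper tail bound would not address the magnitude problem. (There is also a smaller loose end: you establish only $\E[f]\le 1/2+O(\sqrt{\epsilon})$, while Theorem~\ref{thm:main-2} as stated requires $\E[f]\le 1/2$.)

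The paper does not pass through $L_2$ at all. It builds a parallel $L_0$ pipeline: an $L_0$-analog of the FKN corollary on the cube (\Cref{lem:L0-FKN}), a square-system conditioning lemma (\Cref{lem:L0-relative}), and an $L_0$ sparse-representation lemma (\Cref{lem:L0-sparse}) that directly produces $g=e+\sum_{i,j} e_{i,j}x_{i,j}$ with $e\in\ZZ$, $e_{i,j}\in\{0,\pm1\}$, only $O(n)$ of them nonzero, and $\Pr[f\neq g]=O(\epsilon)$ sharp. After that, \Cref{lem:sporadic,lem:sum-of-cosets,lem:constant-or-dictator} apply exactly as in the $L_2$ case. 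The remark closing \Cref{sec:L0-FKN} explains why this separate treatment is necessary: unlike in $L_2$, having many nonzero linear coefficients does not force $\Pr[f\notin\{0,1\}]$ to be large, so the sparse structure has to be extracted by careful counting over square systems rather than by the $L_2$ averaging argument.
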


Answering the fourth question, we can guarantee that the cosets in $\cC$ are completely disjoint.

\begin{restatable}{theorem}{mainfour} \label{thm:main-4}
There exists a constant $\epsilon_0 > 0$ for which the following holds.

If $f\colon S_n \to \RR$ is linear and $\epsilon$-close to Boolean in~$L_\infty$, where $\epsilon \leq \epsilon_0$, then $g(\pi) = \round(f(\pi), \{0,1\})$ is a dictator.
\end{restatable}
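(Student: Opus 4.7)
The plan is to bootstrap from Theorem~\ref{thm:main-2} and then exploit the $L_\infty$ hypothesis to force the additional rigidity. Since $L_\infty$-closeness with parameter $\epsilon$ implies $L_2$-closeness with parameter $\epsilon^2$, and we may assume $\E[f] \leq 1/2$ by replacing $f$ with $1-f$ if necessary, Theorem~\ref{thm:main-2} yields an $O(\epsilon^2)$-disjoint family of cosets $\cC$ such that $g_0 = \sum_{(i,j) \in \cC} x_{i,j}$ satisfies $\E[(f-g_0)^2] = O(\epsilon^2)$. The goal is then twofold: (i)~upgrade ``$O(\epsilon^2)$-disjoint'' to exactly disjoint, after which a short combinatorial argument shows the family must share a common coordinate (all with the same $i$ or all with the same $j$), making $g_0$ a dictator; and (ii)~verify that $\round(f) = g_0$ pointwise.

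Ruling out non-disjoint pairs is the heart of the proof. Suppose $(i_1, j_1), (i_2, j_2) \in \cC$ are non-disjoint, so $i_1 \neq i_2$ and $j_1 \neq j_2$. On the set $A = \{\pi : \pi(i_1) = j_1,\ \pi(i_2) = j_2\}$, which has density $\frac{1}{n(n-1)}$, we have $g_0 \geq 2$ while $f \leq 1+\epsilon$, so $|f - g_0| \geq 1-\epsilon$ pointwise on $A$. Merely comparing $L_2$ norms only excludes this when $\epsilon \gtrsim 1/n$, which is too weak for a constant $\epsilon_0$. To obtain an $n$-independent bound I would induct on $n$: the restriction of $f$ to any coset $\{\pi : \pi(i) = j\}$ is, up to an additive constant, a linear function on a copy of $S_{n-1}$ still $\epsilon$-close to Boolean in $L_\infty$, and so by the inductive hypothesis its rounding is a dictator on $S_{n-1}$. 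The dictator structures obtained from these $n^2$ different restrictions constrain the coefficient matrix of $f$ tightly enough to both forbid any non-disjoint pair in $\cC$ and to propagate a consistent dictator pattern globally, which simultaneously yields~(ii).

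The main technical obstacle is ensuring that the $n^2$ local dictators are mutually compatible: the dictator obtained from restricting to $\{\pi : \pi(i) = j\}$ must cohere with those from $\{\pi : \pi(i) = j'\}$ and $\{\pi : \pi(i') = j\}$, and the various possible ``pivot'' coordinates must not conflict across different restrictions; one must also handle base cases for small $n$. A cleaner alternative, sidestepping induction entirely, would be to use a subgaussian concentration inequality for linear statistics of random permutations to upgrade the $L_2$ approximation from Theorem~\ref{thm:main-2} into a pointwise bound $\|f - g_0\|_\infty = O(\epsilon)$ with $n$-independent constants, which would directly forbid the event $A$ and yield the entire theorem in one shot.
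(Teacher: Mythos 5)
Your approach---bootstrap from Theorem~\ref{thm:main-2} and then upgrade the $L_2$ conclusion to the rigid, pointwise structure the theorem demands---is genuinely different from the paper's. The paper does not invoke Theorem~\ref{thm:main-2} at all for the $L_\infty$ case. Instead it re-runs the reduction to squares from scratch, using the $L_\infty$ hypothesis to obtain a \emph{pointwise} bound $\dist(c_{i_1,j_1}+c_{i_2,j_2}-c_{i_1,j_2}-c_{i_2,j_1},\{0,\pm1\}) \le 2\epsilon$ for \emph{every} quadruple (Lemma~\ref{lem:Linf-sparse}), rearranges to a ``sporadic'' representation whose rows and columns have many entries near zero (Lemma~\ref{lem:Linf-sporadic}), and then reads off the dictator structure directly from the integer-rounded coefficient matrix, again using only pointwise difference bounds obtained by comparing $f$ at carefully chosen pairs of permutations (Lemma~\ref{lem:Linf-sum-of-cosets}). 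None of the $L_2$ machinery, and in particular no $\epsilon$-disjoint-family intermediary, appears.

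The gap in your plan sits exactly where you flag it. You correctly observe that $\E[(f-g_0)^2]=O(\epsilon^2)$ is consistent with a unit-size discrepancy on a single intersection event $A$ of density $\Theta(1/n^2)$, so Theorem~\ref{thm:main-2} alone cannot supply an $n$-independent $\epsilon_0$. Neither of your two repairs closes this gap. The concentration route cannot work as stated: $f-g_0$ is a linear function on $S_n$, and linear functions on $S_n$ do \emph{not} obey an $L_2$-to-$L_\infty$ comparison with $n$-independent constants. For example $h=x_{1,1}$ has $\E[h^2]=1/n\to 0$ while $\|h\|_\infty=1$; this is precisely the shape of the obstruction you yourself exhibit (a unit-size spike on a $\Theta(1/n^2)$-density event). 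A tail inequality for linear statistics of permutations quantifies this phenomenon, it does not rule it out, and it will never deliver $\|f-g_0\|_\infty=O(\epsilon)$ from an $L_2$ bound alone without using the $L_\infty$ hypothesis on $f$ at the level of coefficients. The induction route is more promising, but the statement that ``the dictator structures obtained from these $n^2$ different restrictions constrain the coefficient matrix of $f$ tightly enough'' is a claim, not an argument, and it is not a formality: if $f$ is close to $\sum_{j\in J}x_{1,j}$, then restricting to $\pi(1)=j$ gives a near-constant (a degenerate dictator) for every $j$, so the inductive hypothesis applied along row $1$ yields no information, and you must show that restrictions along other rows and columns are both informative and mutually coherent---the very content you would need to prove. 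Absent that, the plan does not establish that $\cC$ is exactly disjoint nor that $\round(f)=g_0$ pointwise; the paper resolves both questions simultaneously by pinning down the coefficients $c_{i,j}$ directly, a mechanism your outline never reaches.
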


Since $x_{i,j}$ is the indicator function of the coset $(i,j)$, the function $g$ in \Cref{thm:main-1} is the indicator function of the union of the cosets in $\cC$.

\smallskip

If $f$ is not very close to a constant, then we can approximate $f$ by a dictator in all settings.

\begin{restatable}{theorem}{mainonea} \label{thm:main-1a}
 The following holds for some constant $K>0$.
 If $f\colon S_n \to \{0,1\}$ is $\epsilon$-close to linear and $\delta \leq \E[f] \leq 1-\delta$, where $\delta \geq K\sqrt{\epsilon}$, then $f$ is $O(\epsilon/\delta)$-close to a dictator.
\end{restatable}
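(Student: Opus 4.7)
The plan is to derive \Cref{thm:main-1a} from \Cref{thm:main-1} by showing that, once $\E[f] \geq \delta \gg \sqrt{\epsilon}$, the $\epsilon$-disjoint family produced by \Cref{thm:main-1} must concentrate almost entirely in a single row or column of the $n \times n$ grid. Assume WLOG $\E[f] \leq 1/2$ (else replace $f$ by $1-f$), and apply \Cref{thm:main-1} to obtain $\cC$, $c := |\cC| = O(n)$, and $g = \max_{(i,j) \in \cC} x_{i,j}$ with $\Pr[f \neq g] = O(\epsilon)$. The union bound $\E[g] \leq c/n$ together with $\E[g] \geq \delta - O(\epsilon)$ forces $c \geq \delta n/2$, provided $K$ is chosen large enough to absorb the $O(\epsilon)$ slack.

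Introduce row/column degrees $d_i = |\{j : (i,j) \in \cC\}|$ and $e_j = |\{i : (i,j) \in \cC\}|$. Two cosets in $\cC$ are non-disjoint iff they share neither coordinate, so the $\epsilon$-disjointness of $\cC$ reads
\[
 \sum_i \binom{d_i}{2} + \sum_j \binom{e_j}{2} \;\geq\; \binom{c}{2} - O(\epsilon n^2).
\]
Using $\sum_i d_i^2 \leq d^{\mathrm{row}}_{\max} \cdot c$ and $\sum_j e_j^2 \leq d^{\mathrm{col}}_{\max} \cdot c$, this rearranges to $d^{\mathrm{row}}_{\max} + d^{\mathrm{col}}_{\max} \geq c - O(\epsilon n^2 / c)$. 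Separately, if $i^*, j^*$ attain these maxima, then every pair $(i^*, j), (i, j^*) \in \cC$ with $i \neq i^*$ and $j \neq j^*$ is non-disjoint, so
\[
 (d_{i^*} - 1)(e_{j^*} - 1) \;\leq\; O(\epsilon n^2).
\]
WLOG $d_{i^*} \geq e_{j^*}$; the sum bound then gives $d_{i^*} \gtrsim c$, the product bound forces $e_{j^*} = O(\epsilon n^2 / c)$, and feeding this back into the sum bound yields $c - d_{i^*} = O(\epsilon n^2 / c)$.

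The dictator $h(\pi) = [\pi(i^*) \in J_{i^*}]$, with $J_{i^*} = \{j : (i^*, j) \in \cC\}$, satisfies $h \leq g$ pointwise, so a union bound over the $c - d_{i^*}$ cosets of $\cC$ outside row $i^*$ gives
\[
 \Pr[f \neq h] \;\leq\; \Pr[f \neq g] + \Pr[g \neq h] \;\leq\; O(\epsilon) + \frac{c - d_{i^*}}{n} \;=\; O(\epsilon) + O(\epsilon n / c) \;=\; O(\epsilon/\delta),
\]
where the last equality uses $c \geq \delta n / 2$.

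The hard part is the structural step above: the sum bound alone is compatible with a ``cross'' $\cC$ made of a nearly full row and a nearly full column of sizes $\approx c/2$, which is not close to any dictator. The product bound $(d_{i^*}-1)(e_{j^*}-1) = O(\epsilon n^2)$, combined with $d_{i^*} \gtrsim c \gtrsim \delta n$, rules this out precisely in the regime $\delta \geq K\sqrt{\epsilon}$, forcing $e_{j^*} = O(\epsilon n / \delta) \ll c$ and collapsing the cross onto a single row (or, by symmetry, a single column).
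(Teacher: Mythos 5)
Your argument is correct, and it is a genuinely different route from the paper's. The paper derives \Cref{thm:main-1a} by projecting $f$ to $F = f^{\leq 1}$, applying \Cref{thm:main-2a} to $F$ to get a dictator close in $L_2$, and transferring back; \Cref{thm:main-2a} in turn invokes \Cref{lem:constant-or-dictator}, a reusable either-or dichotomy (family is small, or concentrated in a heavy line). You instead start from \Cref{thm:main-1}, use the assumption $\E[f] \geq \delta$ to force $|\cC| = c \gtrsim \delta n$, and then give a fresh combinatorial argument on $\cC$. Your ``sum bound'' $\sum_i \binom{d_i}{2} + \sum_j \binom{e_j}{2} \geq \binom{c}{2} - O(\epsilon n^2)$ plays the role that the heavy-line dichotomy plays in \Cref{lem:constant-or-dictator}: it certifies that at least one line is $\Omega(c)$-heavy, bypassing the paper's case split. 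Your ``product bound'' $(d_{i^*}-1)(e_{j^*}-1) \leq O(\epsilon n^2)$ is then essentially the same inequality the paper uses in the heavy-line branch (``every entry in $\cC \setminus \cL$ is disjoint from at most one entry of $\cL$''). So the combinatorial substance is equivalent, but your packaging is more direct for this particular theorem; the paper's packaging is more modular, since \Cref{lem:constant-or-dictator} is also reused for \Cref{thm:main-2a} and \Cref{thm:main-3a}.

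Two small points to tighten. First, the product bound literally gives $e_{j^*} - 1 \leq O(\epsilon n^2/c)$, not $e_{j^*} \leq O(\epsilon n^2/c)$; this is harmless because the sum bound is actually $d_{i^*} + e_{j^*} \geq c + 1 - O(\epsilon n^2/c)$ (the $+1$ from $\sum_i d_i(d_i-1) \leq (d_{i^*}-1)c$), so the stray $\pm 1$'s cancel and you do get $c - d_{i^*} = O(\epsilon n^2/c)$ cleanly. Second, the step ``$d_{i^*} - 1 \gtrsim c$'' needs $c$ bounded away from $O(1)$; if $c = O(1)$ then $\delta = O(1/n)$, and then $L\epsilon n^2 \leq L\delta^2 n^2/K^2 = O(1/K^2) < 1$ for $K$ large, so $\cC$ has no non-disjoint pairs and $g$ is already a dictator. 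This is exactly the degenerate case that \Cref{lem:constant-or-dictator} dispatches by noting ``if $\delta < 1/\sqrt{L}n$ then $L\epsilon n^2 < 1$.'' Add that line and your proof is complete.
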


\begin{restatable}{theorem}{maintwoa} \label{thm:main-2a}
 The following holds for some constant $K>0$.
 If $f\colon S_n \to \RR$ is linear, $\epsilon$-close to Boolean, and $\delta \leq \E[f] \leq 1-\delta$, where $\delta \geq K\sqrt{\epsilon}$, then $\Pr[f \neq g] = O(\epsilon/\delta)$ for some dictator $g$.
\end{restatable}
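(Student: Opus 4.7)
The plan is to apply Theorem~\ref{thm:main-2} to obtain an approximating sum of coset indicators $g_1 = \sum_{(i,j) \in \cC} x_{i,j}$ for $f$, and then to use the lower bound $\E[f] \geq \delta$ to force $\cC$ to be dominated by a single row (or column), so that $g_1$ is in turn $O(\epsilon/\delta)$-close to a dictator $g$.

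As a preliminary, I would assume WLOG that $\E[f] \leq 1/2$ (otherwise work with $1-f$, which is still linear, $\epsilon$-close to Boolean, and has mean in $[\delta, 1/2]$). Theorem~\ref{thm:main-2} then provides an $\epsilon$-disjoint family $\cC$ with $\E[(f-g_1)^2] = O(\epsilon)$. Since $\E[g_1] = |\cC|/n$ and $|\E[f]-\E[g_1]| \leq \sqrt{\E[(f-g_1)^2]} = O(\sqrt\epsilon)$, taking $K$ large enough in the hypothesis $\delta \geq K\sqrt\epsilon$ guarantees $|\cC| \geq \delta n/2 = \Omega(\delta n)$.

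The core combinatorial step exploits $\epsilon$-disjointness to locate a dominant row. Let $r_i$ and $c_j$ denote the row and column counts of $\cC$. Two distinct cosets are disjoint iff they share a row or column, so the non-disjoint pair count equals $\binom{|\cC|}{2} - \sum_i \binom{r_i}{2} - \sum_j \binom{c_j}{2} = O(\epsilon n^2)$, which rearranges to $\sum_i r_i^2 + \sum_j c_j^2 \geq |\cC|^2 - O(\epsilon n^2)$. WLOG (by replacing $f(\pi)$ by $f(\pi^{-1})$ if needed) $\sum_i r_i^2 \geq |\cC|^2/2 - O(\epsilon n^2)$. Letting $i^*$ maximize $r_i$ and setting $R := r_{i^*}$, the inequality $R \cdot |\cC| \geq \sum_i r_i^2$ yields $R \geq |\cC|/2 - O(\epsilon n^2/|\cC|) = \Omega(\delta n)$. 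Each of the $|\cC|-R$ cosets outside row $i^*$ is non-disjoint with at least $R-1$ of the row-$i^*$ cosets (all except possibly the one sharing its column), so the non-disjoint pair count is also at least $(|\cC|-R)(R-1)$, forcing $|\cC|-R \leq O(\epsilon n^2)/(R-1) = O(\epsilon n/\delta)$.

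The dictator is then $g = \sum_{j : (i^*,j) \in \cC} x_{i^*,j}$, depending only on $\pi(i^*)$. Since $g_1 - g$ is a sum of $|\cC|-R = O(\epsilon n/\delta)$ coset indicators, $\Pr[g_1 \neq g] \leq (|\cC|-R)/n = O(\epsilon/\delta)$, and a direct second-moment calculation (using that pairs of indicators have covariance $O(1/n^2)$) gives $\E[(g_1-g)^2] = O(\epsilon/\delta)$. Combining with $\E[(f-g_1)^2] = O(\epsilon)$ via the triangle inequality yields $\E[(f-g)^2] = O(\epsilon/\delta)$, and Markov's inequality then gives $\Pr[|f-g| \geq 1/2] = O(\epsilon/\delta)$, which is the natural reading of $\Pr[f \neq g]$ given that $g$ is Boolean. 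The main obstacle is the combinatorial balancing in the previous paragraph: the upper bound $|\cC|-R = O(\epsilon n/\delta)$ depends on the matching lower bound $R = \Omega(\delta n)$, which in turn demands that the error term $O(\epsilon n^2/|\cC|)$ be dominated by $|\cC|/2$, i.e., $|\cC|^2 \gg \epsilon n^2$. This is exactly where the hypothesis $\delta \geq K\sqrt\epsilon$ must be invoked, and the constant $K$ has to be chosen large enough to absorb the implicit constants from Theorem~\ref{thm:main-2} without circularity.
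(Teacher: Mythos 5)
Your proposal is essentially correct, and it reaches the same destination as the paper via a closely related but not identical route, so a comparison is in order.

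The paper's proof does not invoke \Cref{thm:main-2} as a black box; it instead re-enters the intermediate constructions (\Cref{lem:L2-sparse}, \Cref{lem:sporadic}, \Cref{lem:sum-of-cosets}) to obtain an $\epsilon$-disjoint family $\cC$ and an approximant $h = \sum_{(i,j)\in\cC} x_{i,j}$ to either $g$ or $1-g$, and then applies a dedicated structural lemma, \Cref{lem:constant-or-dictator}, which asserts a dichotomy for any $\epsilon$-disjoint family and any $\delta \geq \sqrt\epsilon$: either $\E[h^2] = O(\delta)$ (the ``nearly constant'' case), or $h$ is $O(\epsilon/\delta)$-close to a dictator. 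That lemma's proof identifies a \emph{heavy line} (a row or column with at least $K\delta n$ cosets) and shows all but $O((\epsilon/\delta)n)$ cosets lie on it. Your argument replaces this heavy-line dichotomy with a direct second-moment count: writing the number of non-disjoint pairs as $\binom{|\cC|}{2} - \sum_i\binom{r_i}{2} - \sum_j\binom{c_j}{2}$, you deduce $\sum r_i^2 + \sum c_j^2 \geq |\cC|^2 - O(\epsilon n^2)$, extract a maximal row by $R|\cC| \geq \sum r_i^2$, and then bound $|\cC|-R$ by counting non-disjoint pairs against that row. Both arguments use the hypothesis $\delta \geq K\sqrt\epsilon$ at exactly the same juncture, to ensure the error term $O(\epsilon n^2/|\cC|)$ is swamped by $|\cC|/2$. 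What the paper's formulation buys is modularity: \Cref{lem:constant-or-dictator} is reused verbatim in the proof of \Cref{thm:main-3a}, and the dichotomy form (constant-or-dictator) is needed there. What your formulation buys is directness and the ability to treat \Cref{thm:main-2} as a sealed interface, avoiding the $G\in\{g,1-g\}$ case split that the paper's proof has to carry around. One further remark: the paper's theorem statement writes the conclusion as $\Pr[f\neq g] = O(\epsilon/\delta)$, but its own proof actually terminates with $\E[(f-H)^2] = O(\epsilon/\delta)$, which is what your argument establishes as well; for a real-valued $f$ that is only $L_2$-close to Boolean, a literal $\Pr[f\neq g]$ bound does not follow, and your reading (or, equivalently, interpreting the claim as the $L_2$ bound, matching \Cref{thm:main-2}) is the one consistent with the proof.
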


\begin{restatable}{theorem}{mainthreea} \label{thm:main-3a}
 The following holds for some constant $K>0$. 
 If $f\colon S_n \to \RR$ is linear, $\epsilon$-close to Boolean in $L_0$, and $\delta \leq \Pr[f=1] \leq 1-\delta$, where $\delta \geq K\sqrt{\epsilon}$, then $\Pr[f \neq g] = O(\epsilon/\delta)$ for some dictator $g$.
\end{restatable}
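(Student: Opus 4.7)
The plan is to deduce Theorem \ref{thm:main-3a} from Theorem \ref{thm:main-3} by a purely combinatorial analysis: the lower bound $\Pr[f=1] \geq \delta$ forces the coset family supplied by Theorem \ref{thm:main-3} to be large, and largeness together with $\epsilon$-disjointness forces the family to agree with a dictator up to $O(\epsilon n/\delta)$ stray cosets.

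By replacing $f$ with $1-f$ if necessary, one may assume $\Pr[f=1] \leq 1/2$; the lower bound on $\Pr[1-f=1] = \Pr[f=0]$ survives up to a factor of two, since $\Pr[f=0] \geq 1 - \Pr[f=1] - \epsilon \geq \delta - \epsilon \geq \delta/2$ once $K$ is large enough. Applying Theorem \ref{thm:main-3} yields an $\epsilon$-disjoint family of cosets $\cC$ with $\Pr[f \neq g_\cC] = O(\epsilon)$, where $g_\cC = \sum_{(i,j) \in \cC} x_{i,j}$. Setting $m = |\cC|$, the union bound gives $\Pr[g_\cC \geq 1] \leq m/n$, while $\Pr[g_\cC = 1] \geq \Pr[f=1] - \Pr[f \neq g_\cC] \geq \delta/2$ for $K$ large, so $m \geq \delta n / 2$.

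Now view $\cC$ as the edge set of a bipartite graph on $[n] \sqcup [n]$, where the coset $(i,j)$ is the edge between $i$ on the left and $j$ on the right. Two cosets are non-disjoint exactly when the two edges share no vertex, so the $\epsilon$-disjointness condition reads $\binom{m}{2} - \sum_v \binom{d_v}{2} = O(\epsilon n^2)$, where $d_v$ is the degree. Rearranging and using $\sum_v d_v^2 \leq M \cdot \sum_v d_v = 2Mm$, with $M$ the maximum degree, one obtains $M \geq m/2 - O(\epsilon n^2/m) = m/2 - O(\epsilon n/\delta)$, which exceeds $m/3$ for $K$ large. Let $v^*$ attain $M$ and let $T \subseteq \cC$ be the cosets whose edge misses $v^*$, so $|T| = m - M$. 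Any edge in $T$ is non-disjoint with all but at most one of the $M$ edges at $v^*$ (the single one sharing its other endpoint, if present), contributing at least $|T|(M-1)$ to the non-disjoint count. Thus $|T|(M-1) = O(\epsilon n^2)$, and since $M = \Omega(\delta n)$, we conclude $|T| = O(\epsilon n/\delta)$.

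Finally, let $g$ be the indicator of the union of the cosets at $v^*$; these cosets are pairwise disjoint and share a common coordinate, so $g$ is a dictator. Since $g_\cC - g = \sum_{(i,j) \in T} x_{i,j}$, the union bound gives $\Pr[g_\cC \neq g] \leq |T|/n = O(\epsilon/\delta)$, and combining with $\Pr[f \neq g_\cC] = O(\epsilon)$ yields $\Pr[f \neq g] = O(\epsilon/\delta)$, as required. The step I expect to be the main obstacle is the bootstrap from $M \geq m/2$ to $M \geq m - O(\epsilon n/\delta)$: without the assumption $\delta \geq K\sqrt{\epsilon}$ with $K$ sufficiently large, the first-stage error $O(\epsilon n/\delta)$ swamps $m/6$ and the second-stage argument cannot single out one dominant star, so the hypothesis on $\delta$ is used in an essential quantitative way.
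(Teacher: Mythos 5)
Your proof is correct and the mathematical content is essentially the paper's: once $\cC$ is forced to be large ($|\cC|=\Omega(\delta n)$) by the hypothesis $\Pr[f=1]\geq\delta$, the $\epsilon$-disjointness budget forces a single dominant line carrying all but $O(\epsilon n/\delta)$ of the cosets, and that line yields the dictator. The paper packages exactly this ``heavy line'' analysis as \Cref{lem:constant-or-dictator} (stated as a two-way alternative so it can also serve \Cref{thm:main-2a}) and applies it with $\delta\gets K'\delta$ after redoing the internal steps of \Cref{thm:main-3} to track the $G\in\{g,1-g\}$ case split; you instead invoke \Cref{thm:main-3} as a black box, which sidesteps that case split, and re-derive the heavy-line bound inline in bipartite-graph/max-degree language. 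Your closing remark about where $\delta\geq K\sqrt{\epsilon}$ is used quantitatively matches the paper's use of it to rule out the ``close to constant'' alternative of \Cref{lem:constant-or-dictator}.
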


The tradeoff between $\delta$ and $\epsilon/\delta$ is tight, as the following example shows:
\[
 f = \sum_{j=1}^{\delta n} x_{1j} + \sum_{j=1}^{(\epsilon/\delta)n} x_{2j}.
\]
This function is roughly $(\delta + \epsilon/\delta)$-far from constant and $\min(\delta,\epsilon/\delta)$-far from a dictator.

\Cref{thm:main-1,thm:main-2,thm:main-1a,thm:main-2a} improve on prior work of the author together with Ellis and Friedgut~\cite{EFF1,EFF2}, on which we shall have more to say below. (We are not aware of prior work considering the~$L_0$ or $L_\infty$ versions.)

\paragraph{Background} The $n$-dimensional \emph{Boolean cube} is the set $\{0,1\}^n$. A function $f$ on the Boolean cube is \emph{linear} if it can be written as
\[
 f = c + \sum_{i=1}^n c_i x_i,
\]
where $x_i$ is the value of the $i$'th input coordinate. It is not hard to see that a Boolean linear function on the Boolean cube is either constant or of the form $x_i$ or $1 - x_i$.

The classical Friedgut--Kalai--Naor (FKN) theorem~\cite{FKN02} states that if $f$ is a Boolean function which is $\epsilon$-close to linear, then $f$ is $O(\epsilon)$-close to some Boolean linear function, and the same holds if we assume that $f$ is a linear function which is $\epsilon$-close to Boolean. (The $L_0$~version is less interesting, since if $f$ is a linear function then either $f$ is Boolean or $\Pr[f \notin \{0,1\}] \geq 1/4$; and the $L_\infty$~version is easy.)

The FKN theorem is a standard result in an area of study known as \emph{Boolean function analysis}~\cite{ODonnell}. Traditionally, Boolean function analysis concerns itself with functions on the Boolean cube (and sometimes, Gaussian space). In the past decade, Boolean function analysis has been expanded to many other domains, due to applications in combinatorics and theoretical computer science. 

One of the first domains to be explored in this way is the symmetric group. As part of their proof of the Deza--Frankl conjecture in Erd\H{o}s--Ko--Rado theory, Ellis, Friedgut and Pilpel~\cite{EFP11} showed that a Boolean linear function on $S_n$ is a dictator; we call this an \emph{exact} FKN theorem. Motivated by this application, Ellis, Friedgut and the author studied the structure of Boolean functions on $S_n$ which are close to linear, in a sequence of two papers~\cite{EFF1,EFF2}. The first paper shows that if $f\colon S_n \to \{0,1\}$ is sparse, that is $\E[f] = c/n$, and $\epsilon c/n$-close to linear, then $f$ is $O(c^2 (\sqrt{\epsilon} + 1/n)/n))$-close to a maximum of $\round(c)$ cosets, and moreover $c$ is close to an integer. The second paper shows that if $f\colon S_n \to \{0,1\}$ is balanced, that is $\eta \leq \E[f] \leq 1-\eta$, and $\epsilon$-close to linear, then $f$ is $O(\eta^{-1} (\epsilon^{1/7} + 1/n^{1/3}))$-close to a dictator. Both of these results are improved by our main theorems.

Exact and general FKN theorems have been proved for many other domains, such as product domains~\cite{RS15,JOW15,Nayar14}, the slice~\cite{Filmus2016b}, the multislice~\cite{Filmus2020}, and simplicial complexes~\cite{DDFH2018}. An exact FKN theorem was proved for several classical association schemes~\cite{FI2019a}, but general FKN theorems are not known in these domains.

\smallskip

The higher-dimensional analog of the exact FKN theorem states that a degree~$d$ Boolean function on the cube depends on $O(2^d)$ coordinates~\cite{NisanSzegedy94,CHS20,Wellens19}, and can be expressed as a decision tree of depth $O(d^3)$~\cite{NisanSzegedy94,Midrijanis04}. The corresponding FKN theorem is due to Kindler and Safra~\cite{Kindler02,KS04}, and states that if a Boolean function is $\epsilon$-close to degree~$d$ then it is $O(\epsilon)$-close to a Boolean degree~$d$ function. Together with Ihringer, we extended the former result to the slice~\cite{FI2019b}, and the latter result was extended to the slice by Keller and Klein~\cite{KK20}. Another extension of the Kindler--Safra theorem, to the biased Boolean cube, is due to Dinur, Harsha and the author~\cite{DFH2019}; this domain shares some of the confounding features of the symmetric group.

A function on the symmetric group has degree~$d$ if it can be written as a degree~$d$ polynomial in the variables $x_{i,j}$. When $d \geq 2$, a Boolean degree~$d$ function no longer depends on a constant number of ``coordinates'', but it can be written as a decision tree of depth $O(d^8)$~\cite{DFLLV2021}, whose internal nodes contain queries of the form ``$\pi(i) = ?$'' or ``$\pi^{-1}(j) = ?$''.

It is not clear what the extension of the Kindler--Safra theorem to the slice should look like; some ideas can be gleaned from~\cite{DFH2019}. Nevertheless, in the balanced case it is natural to conjecture the following.

\begin{conjecture} \label{cnj:kindler-safra-sn}
 If $f\colon S_n \to \{0,1\}$ is $\epsilon$-close to degree~$d$ and $1/3 \leq \E[f] \leq 2/3$ then $f$ is $O(\epsilon)$-close to a Boolean degree~$d$ function.
\end{conjecture}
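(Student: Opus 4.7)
The plan is to prove the conjecture by induction on the degree $d$, using the main theorems of the present paper as the base case. The base case $d=1$ follows from \Cref{thm:main-1a}: under the hypothesis $1/3 \leq \E[f] \leq 2/3$ we have $\delta = 1/3 \geq K\sqrt{\epsilon}$ for sufficiently small $\epsilon$, so $f$ is $O(\epsilon)$-close to a dictator, which is precisely a Boolean degree-$1$ function.

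For the inductive step, suppose the conjecture holds for degree $d-1$. Given $f\colon S_n\to\{0,1\}$ with $\E[(f-h)^2] \leq \epsilon$ for some degree-$d$ function $h$, I would consider the restrictions $f^{i\to j}$ defined on the coset $\{\pi : \pi(i)=j\}$, which naturally lives on permutations $[n]\setminus\{i\} \to [n]\setminus\{j\}$. For each fixed $i$, the identity $\E[(f-h)^2] = \tfrac{1}{n}\sum_j \E[(f^{i\to j}-h^{i\to j})^2]$, together with the fact that $h^{i\to j}$ has degree $\leq d-1$ in the remaining variables, implies that most of the $n$ restrictions $\{f^{i\to j}\}_j$ are $O(\epsilon)$-close to degree $d-1$. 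The approximating global function is then constructed piecewise: choose a ``good'' coordinate $i$, and set $g(\pi) = g^{i\to\pi(i)}(\pi)$, where $g^{i\to j}$ is either the Boolean degree-$(d-1)$ approximant produced by the inductive hypothesis (when $\E[f^{i\to j}]$ is balanced) or a constant $0$ or $1$ (when $\E[f^{i\to j}]$ is near $0$ or $1$). Since $g = \sum_j x_{i,j} g^{i\to j}$ with each $g^{i\to j}$ of degree $\leq d-1$ in the remaining variables, the global $g$ has degree at most $d$ on $S_n$, and its Boolean-ness is inherited leaf-by-leaf.

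The main obstacle lies in two linked subtleties. First, the inductive hypothesis as stated requires balancedness $1/3 \leq \E[f^{i\to j}] \leq 2/3$, but many restrictions may fall outside this range, particularly those corresponding to coordinates that are ``important'' for the decision-tree representation of $h$ guaranteed by \cite{DFLLV2021}. Handling such unbalanced restrictions requires either strengthening the inductive statement to accommodate all bias levels (in the spirit of \Cref{thm:main-1}), or carefully selecting a preferred coordinate $i$ along which the restrictions remain well-behaved. Second, even granted the induction, the choice of the ``right'' coordinate $i$ is delicate: it must simultaneously ensure that the restrictions are close to degree $d-1$, that the Boolean approximants glue together into a function $g$ satisfying $\E[(f-g)^2] = O(\epsilon)$, and that error does not accumulate through the $O(d^8)$-deep recursion suggested by \cite{DFLLV2021}. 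Overcoming this will likely require hypercontractivity-type inequalities on $S_n$ to transfer $L_2$ approximation error across restrictions without blowup, and a finer analysis of the ``generalized junta'' structure of Boolean degree-$d$ functions on the symmetric group than is presently available.
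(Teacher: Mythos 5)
The statement you are attempting is presented in the paper as \Cref{cnj:kindler-safra-sn}, an open \emph{conjecture}; the paper offers no proof, so there is nothing of the author's to compare your argument against. Your base case is correct: with $\delta = 1/3$ and $\epsilon \leq 1/(9K^2)$, \Cref{thm:main-1a} gives $O(\epsilon)$-closeness to a dictator, which is a Boolean degree-$1$ function, and for $\epsilon$ above that threshold the constant approximant $g \equiv 0$ already satisfies $\E[(f-g)^2] = \E[f] \leq 2/3 = O(\epsilon)$.

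Your inductive step, however, fails at a point you did not flag: restricting a degree-$d$ function to a coset does \emph{not} reduce the degree. If $h$ has degree $d$ on $S_n$ then $h^{i\to j}$ generically still has degree $d$ on the coset --- consider a monomial $x_{i_1,j_1}\cdots x_{i_d,j_d}$ in which neither $i$ nor $j$ appears --- so the claim that $h^{i\to j}$ has degree $\leq d-1$, and hence that the restrictions $f^{i\to j}$ are $O(\epsilon)$-close to degree $d-1$, is simply false; the induction on $d$ does not launch. (What drops degree is a \emph{derivative}, i.e., a difference of two coset restrictions, not a single restriction.) To exploit restriction structure you would instead need the decision-tree representation of a \emph{Boolean} degree-$d$ function from~\cite{DFLLV2021}, where restricting along the root query does shorten the tree, but that presupposes you already have a Boolean degree-$d$ approximant in hand, which is what you are trying to construct. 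The further obstacles you do acknowledge --- loss of balancedness under restriction, the absence of an all-bias inductive statement analogous to the union-of-cosets conclusion of \Cref{thm:main-1}, and the error accumulation through an $O(d^8)$-deep recursion --- are also real; collectively, and together with the degree issue above, they are why this is stated in the paper as an open conjecture rather than a theorem.
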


\paragraph{Structure of the paper} We prove \Cref{thm:main-1,thm:main-2,thm:main-1a,thm:main-2a} in \Cref{sec:L2}, \Cref{thm:main-3,thm:main-3a} in \Cref{sec:L0}, and \Cref{thm:main-4} in \Cref{sec:Linf}.

\paragraph{Notation} We assume familiarity with Boolean function analysis on the Boolean cube~\cite{ODonnell}. 
For a finite set $S$, we define $\dist(x,S) = \min_{y \in S}(|x-y|)$ and $\round(x,S) = \operatorname{argmin}_{y \in S}(|x-y|)$ (we will also allow $S = \ZZ$). We will frequently use the inequality $(a+b)^2 \leq 2a^2 + 2b^2$. For a predicate $P$, $[P] = 1$ if $P$ holds and $[P] = 0$ otherwise.

\section{Approximation in \texorpdfstring{$L_2$}{L2}} \label{sec:L2}

In this section we prove \Cref{thm:main-2,thm:main-2a}, and then derive \Cref{thm:main-1,thm:main-1a}. 

The proof of \Cref{thm:main-2,thm:main-2a} proceeds as follows. The starting point is a function $f\colon S_n \to \RR$ which satisfies $\E[\dist(f,\{0,1\})^2] = \epsilon$. In \Cref{sec:L2-sparse}, we show that $f$ can be approximated by a function of the form
\[
 g = e + \sum_{i,j} e_{i,j} x_{i,j},
\]
where $e$ is an integer, $e_{i,j} \in \{0,\pm 1\}$, and $O(n)$ many of the $e_{i,j}$ are non-zero. This uses a reduction to the Boolean cube described in \Cref{sec:L2-reduction}, followed by an application of the classical FKN theorem on the Boolean cube, whose statement appears in \Cref{sec:L2-FKN}.

The next step, appearing in \Cref{sec:sporadic}, is to find an alternative representation
\[
 g = r + \sum_{i,j} r_{i,j} x_{i,j},
\]
where $r$ is an integer, the $r_{i,j}$ are bounded integers, $O(n)$ many of the $r_{i,j}$ are non-zero, and furthermore, with probability $\Omega(1)$, a random permutation $\pi$ satisfies $r_{i,\pi(i)} = 0$ for all $i \in [n]$.

Given this, we deduce in \Cref{sec:sum-of-cosets} that either $g$ or $1-g$ is close to a function of the form
\[
 h = \sum_{(i,j) \in \cC} x_{i,j},
\]
where $\cC$ is an $\epsilon$-disjoint family of cosets. This proves \Cref{thm:main-2} up to the possibility that $h$ approximates $1-g$ rather than $g$.

We prove a version of \Cref{thm:main-2a} for $h$ in \Cref{sec:constant-or-dictator}, and use it to derive \Cref{thm:main-2,thm:main-2a} in \Cref{sec:L2-main}, where we also deduce \Cref{thm:main-1,thm:main-1a}.

\subsection{FKN theorem on the cube} \label{sec:L2-FKN}

We start by stating the classical Friedgut--Kalai--Naor theorem on the Boolean cube.

\begin{theorem}[\cite{FKN02}] \label{thm:FKN}
 Let $f\colon \{0,1\}^n \to \{0,1\}$ be a Boolean function satisfying $\E[(f^{>1})^2] = \epsilon$. Then $\E[(f - g)^2] = O(\epsilon)$ for some function $g \in \{0, 1, x_1, \ldots, x_n, 1-x_1, \ldots, 1-x_n\}$.
\end{theorem}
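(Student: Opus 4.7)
The plan is to follow the standard fourth-moment argument for FKN on the cube. Pass to the $\{\pm 1\}$ representation by setting $F = 1 - 2f \in \{-1,1\}$ and writing $L := F^{\leq 1} = a_0 + \sum_{i=1}^n a_i x_i$, so that $\|F - L\|_2^2 = 4\epsilon$ and $a_0^2 + \sum_i a_i^2 = \|L\|_2^2 = 1 - 4\epsilon$ by Parseval. The goal is to show that the coefficients $(a_0, a_1, \ldots, a_n)$ are close to a point of the form $(\pm 1, 0, \ldots, 0)$ or $(0, \ldots, 0, \pm 1, 0, \ldots, 0)$.

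The central quantitative estimate is
\[
 \E[(1 - L^2)^2] = O(\epsilon).
\]
Since $F^2 \equiv 1$, the left side equals $\E[(F - L)^2 (F+L)^2]$, and substituting the identity $(F+L)^2 = 4F^2 - 4F(F - L) + (F - L)^2$ rewrites it as $16\epsilon - 4\E[F(F-L)^3] + \E[(F-L)^4]$. The two tail terms are bounded by $O(\epsilon)$ by isolating the degree-$1$ factor $L$ from each product and applying the $(2,4)$-hypercontractive inequality $\|L\|_4 \leq \sqrt{3}\,\|L\|_2$, together with Cauchy--Schwarz, the a priori bound $|F| \equiv 1$, and a short bootstrap.

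Given the key estimate, Parseval applied to the degree-$\leq 2$ polynomial $1 - L^2$ yields
\[
 \E[(1-L^2)^2] = \bigl(1 - a_0^2 - \textstyle\sum_i a_i^2\bigr)^2 + 4 a_0^2 \textstyle\sum_i a_i^2 + 2 \textstyle\sum_{i \neq j} a_i^2 a_j^2,
\]
each summand being nonnegative. The first summand is automatically $16\epsilon^2$, while the other two give the structural constraints $a_0^2 \sum_i a_i^2 = O(\epsilon)$ and $\sum_{i \neq j} a_i^2 a_j^2 = O(\epsilon)$. A short case analysis now rounds $L$ to the desired $g$: if $\sum_i a_i^2 = O(\sqrt\epsilon)$ then $a_0^2 = 1 - O(\sqrt\epsilon)$ and $L$ is $O(\epsilon)$-close in $L_2$ to the constant $\operatorname{sgn}(a_0)$; otherwise $a_0^2 = O(\sqrt\epsilon)$, whence $\sum_i a_i^2 = \Omega(1)$ and the identity $(\sum_i a_i^2)^2 - \sum_i a_i^4 = \sum_{i \neq j} a_i^2 a_j^2 = O(\epsilon)$ forces a unique $k$ with $a_k^2 = 1 - O(\sqrt\epsilon)$ and $\sum_{i \neq k} a_i^2 = O(\sqrt\epsilon)$, so $L$ is $O(\epsilon)$-close to $\operatorname{sgn}(a_k) x_k$. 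Translating back from $F$ to $f$ produces a $g \in \{0,\,1,\,x_i,\,1 - x_i\}$ with $\E[(f - g)^2] = O(\epsilon)$.

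The main obstacle is the key estimate: the naive bound $\E[(1-L^2)^2] = O(1)$ is useless, because the tail $F - L = F^{>1}$ has unbounded degree and hypercontractivity does not apply to it directly. The saving is algebraic: after factoring $1 - L^2 = (F-L)(F+L)$ and expanding $(F+L)$, each tail term contains an exposed factor of the degree-$1$ function $L$, to which hypercontractivity \emph{does} apply, while the remaining factor is controlled by $\|F\|_\infty = 1$ and the $L_2$ bound on $F - L$.
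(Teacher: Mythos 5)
The paper does not prove \Cref{thm:FKN} at all: it is the classical Friedgut--Kalai--Naor theorem, cited directly from~[FKN02] and used as a black box (via \Cref{cor:FKN}). So there is no internal proof to compare against, and your sketch has to be judged on its own merits.

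Your high-level outline is the standard fourth-moment proof: pass to $F = 1-2f$, set $L = F^{\leq 1}$, prove the central estimate $\E[(1-L^2)^2] = O(\epsilon)$, read off the Parseval constraints $a_0^2\sum_i a_i^2 = O(\epsilon)$ and $\sum_{i\neq j} a_i^2 a_j^2 = O(\epsilon)$, and round $L$. The identity $(F+L)^2 = 4F^2 - 4F(F-L)+(F-L)^2$, the Parseval formula for $\E[(1-L^2)^2]$, and the final case analysis (modulo some imprecise $O(\sqrt\epsilon)$ intermediate statements that do resolve to $O(\epsilon)$ once you push the constraints through) are all fine.

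The gap is in the bootstrap for the central estimate, and it is not cosmetic. Write $T = F-L$, so $\|T\|_2^2 = 4\epsilon$. Using $T^2 = 1-2FL+L^2$ and $F = L+T$ one computes $\E[T^2L^2] = 4\epsilon - \E[(1-L^2)^2] - 2\E[TL^3] + O(\epsilon^2)$, and since $\E[T^2L^2]\ge 0$ this gives $\E[(1-L^2)^2] \le 4\epsilon + 2|\E[TL^3]|$. If you now bound $|\E[TL^3]|$ with exactly the tools you list --- Cauchy--Schwarz, hypercontractivity applied to $L$, and $|F|\equiv 1$ --- you get $|\E[TL^3]| \le \|T\|_2\,\|L^3\|_2 = O(\sqrt\epsilon)$, hence only $\E[(1-L^2)^2] = O(\sqrt\epsilon)$, which is the wrong exponent. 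The same issue appears if you instead try to bound $\E[(F-L)^4]$ directly: the natural self-improving inequalities you can extract from these ingredients stabilize at $O(1)$ or $O(\sqrt\epsilon)$, not $O(\epsilon)$. To close the loop at $O(\epsilon)$ you need the finer observation that $T$ is orthogonal to all degree-$\le 1$ functions, so $\E[TL^3] = \langle T,(L^3)^{\geq 2}\rangle$, and that the Fourier weight of $(L^3)^{\geq 2}$ is controlled by $\V[L^2]$ itself, namely $\|(L^3)^{\geq 2}\|_2^2 = O\bigl(a_0^2\sum_i a_i^2 + \sum_{i\neq j}a_i^2a_j^2\bigr) = O(\V[L^2])$. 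This yields $|\E[TL^3]| = O\bigl(\sqrt{\epsilon\,\E[(1-L^2)^2]}\bigr)$, and then $V \le 4\epsilon + O(\sqrt{\epsilon V})$ solves to $V = O(\epsilon)$. The fact that the error term must itself be proportional to $\sqrt{V}$ rather than to a fixed $\sqrt\epsilon$ is the heart of the matter, and it is not implied by ``isolate $L$, apply $\|L\|_4\le\sqrt3\|L\|_2$, CS, $|F|\equiv1$, short bootstrap.'' (An alternative correct route is to split on $\{|L|\le M\}$ and apply hypercontractivity to the degree-two function $1-L^2$, not to $L$, together with the Hoeffding tail bound for $L$.)
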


This formulation of the FKN theorem states that a Boolean function which is close to linear is close to a Boolean function depending on at most one coordinate. Another formulation states that a linear function which is close to Boolean is close to a Boolean function depending on at most one coordinate.

\begin{corollary} \label{cor:FKN}
 Let $f\colon \{0,1\}^n \to \RR$ be given by
\[
 f(x_1,\ldots,x_n) = c + \sum_{i=1}^n c_i x_i.
\]
 If $\E[\dist(f,\{0,1\})^2] = \epsilon$ then there exist $d_1,\ldots,d_n \in \{0,\pm 1\}$ such that
\[
 \sum_{i=1}^n (c_i - d_i)^2 = O(\epsilon).
\]
 Furthermore, at most one of the $d_i$ is non-zero.
\end{corollary}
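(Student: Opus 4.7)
The plan is to reduce \Cref{cor:FKN} to \Cref{thm:FKN} by rounding. Let $B\colon \{0,1\}^n \to \{0,1\}$ be defined by $B(x) = \round(f(x),\{0,1\})$, so that $|f(x) - B(x)| = \dist(f(x),\{0,1\})$ pointwise, and hence $\E[(f-B)^2] = \epsilon$. Since $f$ has degree $\le 1$ as a polynomial, the high-degree part satisfies $B^{>1} = (B-f)^{>1}$, and so
\[
 \E[(B^{>1})^2] \leq \E[(B-f)^2] = \epsilon.
\]
Applying \Cref{thm:FKN} to $B$ produces a function $g \in \{0, 1, x_1, \ldots, x_n, 1-x_1, \ldots, 1-x_n\}$ with $\E[(B-g)^2] = O(\epsilon)$. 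Writing $g = d_0 + \sum_{i=1}^n d_i x_i$, we have $d_0 \in \{0,1\}$, $d_i \in \{0,\pm 1\}$ for $i \ge 1$, and at most one $d_i$ (with $i \ge 1$) is non-zero, as required.

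It remains to convert this approximation into a bound on the coefficients. By the triangle inequality (in the form $(a+b)^2 \le 2a^2 + 2b^2$),
\[
 \E[(f-g)^2] \leq 2\E[(f-B)^2] + 2\E[(B-g)^2] = O(\epsilon).
\]
Now $f - g = (c-d_0) + \sum_{i=1}^n (c_i - d_i) x_i$. Since the coordinates $x_i$ are independent under the uniform measure on $\{0,1\}^n$ and each has variance $1/4$,
\[
 \V[f-g] = \frac{1}{4} \sum_{i=1}^n (c_i - d_i)^2.
\]
Because $\V[f-g] \le \E[(f-g)^2] = O(\epsilon)$, we conclude $\sum_{i=1}^n (c_i - d_i)^2 = O(\epsilon)$, finishing the proof.

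There is essentially no obstacle here: the content is entirely that \Cref{thm:FKN} for Boolean functions implies the linear-close-to-Boolean version by rounding, and the coefficient bound comes for free from the orthogonality of the coordinates on the cube. The only minor point worth noting is that the rounding step uses $f$ having degree at most $1$ to cleanly identify $B^{>1}$ with $(B-f)^{>1}$, so that the distance $\E[\dist(f,\{0,1\})^2]$ controls the quantity $\E[(B^{>1})^2]$ required by \Cref{thm:FKN}.
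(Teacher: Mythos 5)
Your proof is correct and follows essentially the same route as the paper: round $f$ to a Boolean $B$, apply \Cref{thm:FKN}, deduce $\E[(f-g)^2]=O(\epsilon)$ by the triangle inequality, and read off the coefficient bound from the level-1 part of the $L^2$ distance. The only cosmetic difference is that you phrase the last step via $\V[f-g]$ and the independence of the $x_i$, whereas the paper invokes Parseval's identity on the degree-1 Fourier coefficients directly---but these are the same computation.
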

\begin{proof}
 Let $F = \round(f,\{0,1\})$. Then $\E[(F^{>1})^2] \leq \E[(F-f)^2] = \E[\dist(f,\{0,1\})^2] = \epsilon$. According to \Cref{thm:FKN}, $\E[(F - g)^2] = O(\epsilon)$ for some function $g$ which is of one of the forms $0,1,x_i,1-x_i$, where $i \in [n]$. Thus $\E[(f - g)^2] = O(\E[(f - F)^2] + \E[(F - g)^2]) = O(\epsilon)$.
 
 The next step is to consider the Fourier expansions of $f$ and $g$, and to apply Parseval's identity $\|f - g\|^2 = \sum_S (\hat{f}(S) - \hat{g}(S))^2$. The Fourier expansion of $f$ is
\[
 f = c + \frac{1}{2} \sum_{i=1}^n c_i - \frac{1}{2} \sum_{i=1}^n c_i (-1)^{x_i}.
\]
 The Fourier expansion of $g$ is one of the following:
\[
 0, 1, \frac{1}{2} - \frac{1}{2} (-1)^{x_j}, \frac{1}{2} + \frac{1}{2} (-1)^{x_j}.
\]
 In all cases, we can write
\[
 g = d - \frac{1}{2} \sum_{i=1}^n d_i (-1)^{x_i},
\]
 where $d_1,\ldots,d_n \in \{0,\pm1\}$, and at most one $d_i$ is non-zero. Parseval's identity implies that
\[
 \frac{1}{4} \sum_{i=1}^n (c_i - d_i)^2 = \sum_{|S| = 1} (\hat{f}(S) - \hat{g}(S))^2 \leq \E[(f - g)^2] = O(\epsilon),
\]
 from which the corollary immediately follows.
\end{proof}

\subsection{Reduction to the cube} \label{sec:L2-reduction}

The starting point of the proof is a reduction to the Boolean cube. Recall that our initial goal is proving \Cref{thm:main-2,thm:main-2a}. Accordingly, we are given a function $f\colon S_n \to \RR$ of the form
\[
 f = \sum_{i=1}^n \sum_{j=1}^n c_{i,j} x_{i,j}
\]
such that $\E[\dist(f,\{0,1\})^2] = \epsilon$.

Let $\va = (a_1,\ldots,a_n)$ and $\vb = (b_1,\ldots,b_n)$ be two permutations of $[n]$, and let $S_{\va,\vb} \subset S_n$ consist of all permutations which send $\{a_{2t-1},a_{2t}\}$ to $\{b_{2t-1},b_{2t}\}$ for $t \in [\lfloor n/2 \rfloor]$; if $n$ is odd, then these permutations necessarily send $a_n$ to $b_n$. Let $f_{\va,\vb}$ be the restriction of $f$ to $S_{\va,\vb}$, and let $\epsilon_{\va,\vb} = \E[\dist(f_{\va,\vb},\{0,1\})^2]$.

We identify $S_{\va,\vb}$ with an $\lfloor n/2 \rfloor$-dimensional cube $\{0,1\}^{\lfloor n/2 \rfloor}$ in the following way. Given $(x_1,\ldots,x_{\lfloor n/2 \rfloor}) \in \{0,1\}^{\lfloor n/2 \rfloor}$, if $x_t = 1$ then the permutation sends $a_{2t-1}$ to $b_{2t-1}$ and $a_{2t}$ to $b_{2t}$, and if $x_t = 0$ then the permutation sends $a_{2t-1}$ to $b_{2t}$ and $a_{2t}$ to $b_{2t-1}$. Thus
\[
 f_{\va,\vb}(x_1,\ldots,x_{\lfloor n/2 \rfloor}) = c + \sum_{t=1}^{\lfloor n/2 \rfloor} (c_{a_{2t-1},b_{2t-1}} + c_{a_{2t},b_{2t}} - c_{a_{2t-1},b_{2t}} - c_{a_{2t},b_{2t-1}}) x_t,
\]
where $c$ is given by slightly different formulas according to the parity of $n$: when $n$ is even,
\[
 c = \sum_{t=1}^{n/2} (c_{a_{2t-1},b_{2t}} + c_{a_{2t},b_{2t-1}}),
\]
and when $n$ is odd,
\[
 c = \sum_{t=1}^{(n-1)/2} (c_{a_{2t-1},b_{2t}} + c_{a_{2t},b_{2t-1}}) + c_{a_n,b_n}.
\]

If we choose $\va,\vb$ uniformly at random and then a random permutation in $S_{\va,\vb}$, then we obtain a uniformly random permutation. This shows that
\[
 \E_{\va,\vb}[\epsilon_{\va,\vb}] = \epsilon.
\]
In the following subsection we use this, together with \Cref{cor:FKN}, to approximate $f$ by a sparse weighted sum of $x_{i,j}$'s.

\subsection{Sparse representation} \label{sec:L2-sparse}

Our goal in this subsection is to prove the following lemma.

\begin{lemma} \label{lem:L2-sparse}
 Let $f\colon S_n \to \RR$ be a linear function satisfying $\E[\dist(f,\{0,1\})^2] = \epsilon$, where $\epsilon \leq 1$. Then there is an integer $e$ and integers $e_{i,j} \in \{0,\pm1\}$ such that the function
 \[
  g = e + \sum_{i=1}^n \sum_{j=1}^n e_{i,j} x_{i,j}
 \]
 satisfies $\E[(f-g)^2] = O(\epsilon)$. Moreover, only $O(n)$ many of the $e_{i,j}$ are non-zero.
 
 Furthermore, $\Pr[g \notin \{0,1\}] = O(\epsilon)$.
\end{lemma}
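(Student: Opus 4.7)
The plan is to apply the reduction from \Cref{sec:L2-reduction} together with \Cref{cor:FKN} to random Boolean sub-cubes of $S_n$, average the resulting local data, and then round to construct $g$.

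First, for a uniformly random pair of permutations $(\va,\vb)$, we have $\E_{\va,\vb}[\epsilon_{\va,\vb}] = \epsilon$, so Markov's inequality shows that at least a $99\%$-fraction of pairs satisfy $\epsilon_{\va,\vb} = O(\epsilon)$. For each such pair, \Cref{cor:FKN} applied to the restriction $f_{\va,\vb}$ on the $\lfloor n/2\rfloor$-cube produces an integer constant together with coefficients $d_t^{(\va,\vb)} \in \{0,\pm1\}$, at most one non-zero, such that
\[
\sum_{t} \bigl(\gamma_t^{(\va,\vb)} - d_t^{(\va,\vb)}\bigr)^2 = O(\epsilon),
\]
where $\gamma_t^{(\va,\vb)} = c_{a_{2t-1},b_{2t-1}} + c_{a_{2t},b_{2t}} - c_{a_{2t-1},b_{2t}} - c_{a_{2t},b_{2t-1}}$. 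Averaging over $(\va,\vb)$ converts this into a global statement about the $2\times 2$ cross-differences $\Delta_{i,k,j,\ell} := c_{i,j} + c_{k,\ell} - c_{i,\ell} - c_{k,j}$: over uniformly random $4$-tuples with $i \neq k$ and $j \neq \ell$, we have $\E[\dist(\Delta,\{0,\pm 1\})^2] = O(\epsilon/n)$, and moreover the associated rounded value is nonzero on only an $O(1/n)$-fraction of tuples.

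Next, I would translate this into a structure theorem for the coefficient matrix $C=(c_{i,j})$. Writing $c_{i,j} = u_i + v_j + M_{i,j}$ with $M$ doubly centered (this gauge change alters $f$ only by a constant), the bounds above force $\|M\|_F^2 = O(n)$, and more finely imply that $M$ is $O(\sqrt{\epsilon n})$-close in Frobenius norm to an integer matrix $E$ with entries in $\{0,\pm 1\}$ supported on $O(n)$ positions. The sparsity bound comes from the ``at most one non-zero $d_t$ per restriction'' feature of \Cref{cor:FKN}, which bounds the density of positions $(i,j)$ where $|M_{i,j}|$ is appreciable; the $\{0,\pm 1\}$-roundability comes from the $\dist(\Delta,\{0,\pm 1\})^2$ estimate. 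Once $E=(e_{i,j})$ is in hand, I set $e := \round(\E[f] - \tfrac{1}{n}\sum_{i,j} e_{i,j},\,\ZZ)$ and $g := e + \sum_{i,j} e_{i,j} x_{i,j}$. The bound $\E[(f-g)^2] = O(\epsilon)$ then follows from the variance formula $\V[\langle A,\Pi\rangle] = \tfrac{1}{n-1}\|A^{\mathrm{dc}}\|_F^2$ applied to $A = C-E$ (where $A^{\mathrm{dc}}$ is the doubly-centered part), together with a separate check that $\E[f] - \tfrac{1}{n}\sum e_{i,j}$ is itself $O(\sqrt\epsilon)$-close to $\ZZ$ so that rounding to $e$ is cheap.

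The final assertion $\Pr[g \notin \{0,1\}] = O(\epsilon)$ comes for free: $g$ is integer-valued, so $\dist(g(\pi),\{0,1\}) \geq 1$ whenever $g(\pi) \notin \{0,1\}$, and $\E[\dist(g,\{0,1\})^2] \leq 2\E[(g-f)^2] + 2\E[\dist(f,\{0,1\})^2] = O(\epsilon)$ via $(a+b)^2 \leq 2a^2 + 2b^2$, so Markov closes the loop. I expect the main obstacle to be the sparsity argument in the second paragraph: converting the probabilistic statement that $\Delta$ is usually near $0$ (and near $\pm 1$ only on an $O(1/n)$-fraction of $4$-tuples) into a deterministic decomposition of $M$ whose large entries form an $O(n)$-sparse subset of $[n]\times[n]$, while keeping the $L_2$-rounding loss under $O(\epsilon n)$ and simultaneously controlling the gauge freedom in $(u_i,v_j)$ so that the constant $e$ is indeed an integer.
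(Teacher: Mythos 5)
Your high-level plan matches the paper's: restrict $f$ to random sub-cubes $S_{\va,\vb}$, apply \Cref{cor:FKN}, average the resulting estimates on the cross-differences $\Delta_{i,k,j,\ell} = c_{i,j}+c_{k,\ell}-c_{i,\ell}-c_{k,j}$, then find a sparse $\{0,\pm1\}$-valued approximant, bound $\E[(f-g)^2]$ via the variance of a linear statistic, and round the constant to an integer.

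There is, however, a genuine error in the step you flag as the ``main obstacle,'' and it is not merely a matter of filling in details: the doubly-centered gauge you choose is the wrong one, and the claim that $M$ is $O(\sqrt{\epsilon n})$-close in Frobenius norm to a sparse $\{0,\pm1\}$-valued $E$ is false. Take the dictator $f = \sum_{j=1}^{n/2} x_{1,j}$, which is Boolean and linear, so $\epsilon = 0$. Its coefficient matrix has $c_{1,j}=1$ for $j\le n/2$ and $c_{i,j}=0$ otherwise. After double-centering, the first row of $M$ is $M_{1,j} = 1/2 - 1/(2n)$ for $j \le n/2$ and $M_{1,j} \approx -1/2$ for $j > n/2$; thus $\|M - E\|_F^2 \gtrsim n/4$ for every $\{0,\pm1\}$-valued $E$, so no $O(\sqrt{\epsilon n})$-approximation exists. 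The cross-differences $\Delta_{i,k,j,\ell}$ are gauge-invariant and therefore do lie in $\{0,\pm1\}$ here, but the doubly-centered $M_{i,j}$ is an \emph{average} over $k,\ell$ of cross-differences, and such an average can sit near $\pm 1/2$ even when every summand is in $\{0,\pm1\}$.

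The paper avoids this by making a different gauge choice: rather than double-centering, it anchors at a concrete pair $(i_1,j_1)$ chosen by Markov's inequality so that, simultaneously, $\E_{i,j}[(\Delta_{i_1,i,j_1,j} - \round(\Delta_{i_1,i,j_1,j},\{0,\pm1\}))^2] = O(\epsilon/n)$ and $\Pr_{i,j}[\round(\Delta_{i_1,i,j_1,j},\{0,\pm1\})\neq 0] = O(1/n)$. Then $d_{i,j} := \Delta_{i_1,i,j_1,j} = c_{i_1,j_1}+c_{i,j}-c_{i_1,j}-c_{i,j_1}$ is itself one of the analyzed cross-differences (not an average of them), so roundability to $\{0,\pm1\}$ and sparsity are inherited directly. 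If you replace your $M_{i,j}$ with this anchored $d_{i,j}$ (which differs from $c_{i,j}$ by a matrix of the form $u_i + v_j$, hence represents the same $f$ up to a constant), the remainder of your argument---the variance formula to bound $\E[(f-g)^2]$, the rounding of the constant, and the Markov argument for $\Pr[g \notin \{0,1\}]$---goes through essentially as you describe.
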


Since $f$ is linear, it can be written as
\[
 f = \sum_{i=1}^n \sum_{j=1}^n c_{i,j} x_{i,j}.
\]

If $n = 1$ then $f = c_{1,1}$ is constant, where $\dist(c_{1,1},\{0,1\})^2 = \epsilon$. If $|c_{1,1}|^2 = \epsilon$ then we can take $g = 0$, and if $|c_{1,1} - 1|^2 = \epsilon$ then we can take $g = 1$. From now on, we assume that $n \geq 2$.

Let $\va,\vb \in S_n$. Applying \Cref{cor:FKN} to $f_{\va,\vb}$ shows that for some $d_1,\ldots,d_{\lfloor n/2 \rfloor} \in \{0,\pm 1\}$, we have
\[
 \sum_{t=1}^{\lfloor n/2 \rfloor} (c_{a_{2t-1},b_{2t-1}} + c_{a_{2t},b_{2t}} - c_{a_{2t-1},b_{2t}} - c_{a_{2t},b_{2t-1}} - d_t)^2 = O(\epsilon_{\va,\vb}).
\]
Moreover, at most one of $d_1,\ldots,d_{\lfloor n/2 \rfloor}$ is non-zero. In order to capitalize on that, define
\[
 e_{i_1,i_2,j_1,j_2} = \round(c_{i_1,j_1} + c_{i_2,j_2} - c_{i_1,j_2} - c_{i_2,j_1}, \{0,\pm 1\}).
\]
Clearly
\begin{multline} \label{eq:sparse-1}
 \sum_{t=1}^{\lfloor n/2 \rfloor} (c_{a_{2t-1},b_{2t-1}} + c_{a_{2t},b_{2t}} - c_{a_{2t-1},b_{2t}} - c_{a_{2t},b_{2t-1}} - e_{a_{2t-1},a_{2t},b_{2t-1},b_{2t}})^2 \leq \\
 \sum_{t=1}^{\lfloor n/2 \rfloor} (c_{a_{2t-1},b_{2t-1}} + c_{a_{2t},b_{2t}} - c_{a_{2t-1},b_{2t}} - c_{a_{2t},b_{2t-1}} - d_t)^2 = O(\epsilon_{\va,\vb}),
\end{multline}
which implies that
\[
 \sum_{t=1}^{\lfloor n/2 \rfloor} (e_{a_{2t-1},a_{2t},b_{2t-1},b_{2t}} - d_t)^2 = O(\epsilon_{\va,\vb}).
\]
Since at most one of the $d_t$ is non-zero, this implies that
\begin{equation} \label{eq:sparse-2}
 \sum_{t=1}^{\lfloor n/2 \rfloor} [e_{a_{2t-1},a_{2t},b_{2t-1},b_{2t}} \neq 0] \leq 1 + O(\epsilon_{\va,\vb}).
\end{equation}

Choosing $\va,\vb$ uniformly at random and taking expectation of~\eqref{eq:sparse-1}, we obtain
\[
 \lfloor n/2 \rfloor \E_{\substack{i_1,i_2,j_1,j_2 \in [n] \\ i_1 \neq i_2, j_1 \neq j_2}}[(c_{i_1,j_1} + c_{i_2,j_2} - c_{i_1,j_2} - c_{i_2,j_1} - e_{i_1,i_2,j_1,j_2})^2] = O\bigl(\E[\epsilon_{\va,\vb}]\bigr) = O(\epsilon).
\]
Doing the same for~\eqref{eq:sparse-2}, we obtain
\[
 \lfloor n/2 \rfloor \Pr_{\substack{i_1,i_2,j_1,j_2 \in [n] \\ i_1 \neq i_2, j_1 \neq j_2}}[e_{i_1,i_2,j_1,j_2} \neq 0] \leq 1 + O(\epsilon) = O(1).
\]
Markov's inequality shows that each of the following holds with probability at least $2/3$ over a random choice of $i_1,j_1 \in [n]$:
\begin{gather}
 \E_{\substack{i_2,j_2 \in [n] \\ i_2 \neq i_1, j_2 \neq j_1}}[(c_{i_1,j_1} + c_{i_2,j_2} - c_{i_1,j_2} - c_{i_2,j_1} - e_{i_1,i_2,j_1,j_2})^2] = O(\epsilon/n), \label{eq:sparse-1a} \\
 \Pr_{\substack{i_2,j_2 \in [n] \\ i_2 \neq i_1, j_2 \neq j_1}}[e_{i_1,i_2,j_1,j_2} \neq 0] = O(1/n). \label{eq:sparse-2a}
\end{gather}
Hence there exists a choice of $i_1,j_1 \in [n]$ for which both of these statements hold simultaneously.

\smallskip

Let us take stock of our situation. We have shown that on average,
\[
 d_{i_2,j_2} := c_{i_1,j_1} + c_{i_2,j_2} - c_{i_1,j_2} - c_{i_2,j_1} \approx e_{i_1,i_2,j_1,j_2},
\]
where $e_{i_1,i_2,j_1,j_2} \in \{0,\pm 1\}$, and only $O(n)$ of these coefficients are non-zero.
This suggests finding a different representation of $f$ which involves the left-hand sides. Indeed,
\begin{multline*}
 \sum_{i=1}^n \sum_{j=1}^n d_{i,j} x_{i,j} =
 \sum_{i=1}^n \sum_{j=1}^n c_{i_1,j_1} x_{i,j} - \sum_{i=1}^n \sum_{j=1}^n c_{i_1,j} x_{i,j} - \sum_{i=1}^n \sum_{j=1}^n c_{i,j_1} x_{i,j} + \sum_{i=1}^n \sum_{j=1}^n c_{i,j} x_{i,j} = \\
 n c_{i_1,j_1} - \sum_{j=1}^n c_{i_1,j} - \sum_{i=1}^n c_{i,j_1} + \sum_{i=1}^n \sum_{j=1}^n c_{i,j} x_{i,j},
\end{multline*}
which implies that for an appropriate $d$,
\[
 f = d + \sum_{i=1}^n \sum_{j=1}^n d_{i,j} x_{i,j}.
\]
Furthermore, defining $e_{i,j} := e_{i_1,i,j_1,j}$, \eqref{eq:sparse-1a} implies that
\[
 \sum_{i=1}^n \sum_{j=1}^n (d_{i,j} - e_{i,j})^2 = O(n\epsilon),
\]
since $d_{i,j} = e_{i,j} = 0$ if $i = i_1$ or $j = j_1$. Moreover, \eqref{eq:sparse-2a} shows that only $O(n)$ of the coefficients $e_{i,j}$ are non-zero.

We would now like to say that if we replace the coefficients $d_{i,j}$ with the coefficients $e_{i,j}$ then the resulting function is close to $f$. While this is correct, we have to be careful when doing the switch, replacing $x_{i,j}$ with the corresponding centered term $x_{i,j} - 1/n$. For an appropriate $d'$,
\[
 f = d' + \sum_{i=1}^n \sum_{j=1}^n d_{i,j} (x_{i,j} - 1/n).
\]
Define
\[
 g = d' + \sum_{i=1}^n \sum_{j=1}^n e_{i,j} (x_{i,j} - 1/n),
\]
which for an appropriate $e$ can also be written as
\[
 g = e + \sum_{i=1}^n \sum_{j=1}^n e_{i,j} x_{i,j}.
\]
Below, we will show that $\E[(f - g)^2] = O(\epsilon)$. In order to complete the proof of \Cref{lem:L2-sparse}, we show that we can slightly modify $g$ so that $e$ becomes an integer.

Since $\E[\dist(f,\{0,1\})^2] = \epsilon$ and $\E[(f - g)^2] = O(\epsilon)$, we have $\E[\dist(g,\{0,1\})^2] = O(\epsilon)$. Since all $e_{i,j}$ are integers, this implies that $\dist(e,\ZZ)^2 = O(\epsilon)$. Let $e'$ be the integer closest to $e$, and define
\[
 g' = e' + \sum_{i=1}^n \sum_{j=1}^n e_{i,j} x_{i,j}.
\]
Then $\E[(g-g')^2] = (e-e')^2 = O(\epsilon)$ and so $\E[(f-g')^2] = \epsilon$.

The function $g'$ satisfies $\E[\dist(g',\{0,1\})^2] = O(\E[\dist(f,\{0,1\})^2] + \E[(f-g')^2]) = O(\epsilon)$. Since $g'$ is integer-valued, this implies furthermore that $\Pr[g' \notin \{0,1\}] = O(\epsilon)$.

\smallskip

In the rest of this subsection, we show that $\E[(f - g)^2] = O(\epsilon)$. If we define $\delta_{i,j} = d_{i,j} - e_{i,j}$ then on the one hand
\begin{equation} \label{eq:sparse-1c}
 \frac{1}{n} \sum_{i=1}^n \sum_{j=1}^n \delta_{i,j}^2 = O(\epsilon),
\end{equation}
and on the other hand
\[
 f - g = \sum_{i=1}^n \sum_{j=1}^n \delta_{i,j} (x_{i,j} - 1/n).
\]

Calculation shows that
\[
 \E[(x_{i_1,j_1} - 1/n) (x_{i_2,j_2} - 1/n)] = \E[x_{i_1,j_1} x_{i_2,j_2}] - \frac{1}{n^2} = 
 \begin{cases}
  \frac{n-1}{n^2} & \text{if } i_1 = j_1 \text{ and } i_2 = j_2, \\
  -\frac{1}{n^2} & \text{if } i_1 = j_1 \text{ or } i_2 = j_2 \text{ but not both}, \\
  \frac{1}{n^2(n-1)} & \text{if } i_1 \neq j_1 \text{ and } i_2 \neq j_2.
 \end{cases}
\]
It follows that
\begin{multline*}
 \E[(f-g)^2] =
 \underbrace{\frac{n-1}{n^2} \sum_{i=1}^n \sum_{j=1}^n \delta_{i,j}^2}_{A} -
 \underbrace{\frac{1}{n^2} \sum_{i=1}^n \sum_{\substack{j_1,j_2 \in [n] \\ j_1 \neq j_2}} \delta_{i,j_1} \delta_{i,j_2}}_{B} -
 \underbrace{\frac{1}{n^2} \sum_{j=1}^n \sum_{\substack{i_1,i_2 \in [n] \\ i_1 \neq i_2}} \delta_{i_1,j} \delta_{i_2,j}}_{C} + \\
 \underbrace{\frac{1}{n^2(n-1)} \sum_{\substack{i_1,i_2 \in [n] \\ i_1 \neq i_2}} \sum_{\substack{j_1,j_2 \in [n] \\ j_1 \neq j_2}} \delta_{i_1,j_1} \delta_{i_2,j_2}}_{D}.
\end{multline*}
\Cref{eq:sparse-1c} immediately implies that $A = O(\epsilon)$. As for $B$, using $|\delta_1 \delta_2| \leq (\delta_1^2 + \delta_2^2)/2$, we can bound
\[
 |B| \leq \frac{n-1}{n^2} \sum_{i=1}^n \sum_{j=1}^n \delta_{i,j}^2 = O(\epsilon).
\]
The same bound holds for $C$. Finally, we can bound $D$ in the same way:
\[
 |D| \leq \frac{n-1}{n^2} \sum_{i=1}^n \sum_{j=1}^n \delta_{i,j}^2 = O(\epsilon).
\]
Altogether, this shows that $\E[(f-g)^2] = O(\epsilon)$, as wanted.

\subsection{Sporadic representation} \label{sec:sporadic}

\Cref{lem:L2-sparse} constructs a function $g$ which is almost Boolean in an $L_0$~sense. Furthermore, $g$ has a representation $g = e + \sum_{i,j} e_{i,j} x_{i,j}$ which is sparse in the sense that the \emph{support} $\{(i,j) : e_{i,j} \neq 0\}$ has size $O(n)$. In order to make further progress, we need to find an alternative representation of $g$ whose support satisfies the stronger property that with constant probability, a random permutation hits \emph{no} coset in the support. We will have to pay for this by allowing larger coefficients in the sum.

\begin{lemma} \label{lem:sporadic}
Let $g\colon S_n \to \ZZ$ be a function of the form
\[
 g = e + \sum_{i=1}^n \sum_{j=1}^n e_{i,j} x_{i,j},
\]
where $e \in \ZZ$; $e_{i,j} \in \{0,\pm 1\}$ for all $i,j \in [n]$; at most $Cn$ of the $e_{i,j}$ are non-zero; and $\Pr[g \notin \{0,1\}] = \epsilon$. There exist constants $b,N \in \ZZ$ and $\gamma > 0$ (possibly depending on $C$) such that if $n \geq N$ then $g$ has an alternative representation
\[
 g = r + \sum_{i=1}^n \sum_{j=1}^n r_{i,j} x_{i,j},
\]
where $r \in \ZZ$; $r_{i,j} \in \{-b, \ldots, b\}$ for all $i,j \in [n]$; at most $O_C(n)$ of the $r_{i,j}$ are non-zero; and
\[
 \Pr_{\pi \in S_n}[r_{i,\pi(i)} = 0 \text{ for all } i \in [n]] \geq \gamma.
\]
\end{lemma}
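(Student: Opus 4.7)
The plan hinges on the non-uniqueness of the coefficient representation: for any integer vectors $\alpha, \beta \in \ZZ^n$, replacing each $e_{i,j}$ by $e_{i,j} - \alpha_i - \beta_j$ and compensating with a shift $e \mapsto e + \sum_i \alpha_i + \sum_j \beta_j$ preserves the function $g$. I will use this freedom to find integer shifts $\alpha, \beta$ producing a support $M = \{(i,j) : r_{i,j} \neq 0\}$ that a random permutation avoids with positive constant probability.

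\textbf{Step 1 (mode normalization).} For each row $i$, let $v_i \in \{-1, 0, +1\}$ be the most common value of $e_{i,\cdot}$, and subtract $v_i$ from the row (adjusting $e$). Call a row \emph{heavy} if it originally has more than $n/3$ nonzeros; by the sparsity hypothesis $|\mathrm{supp}(e)| \leq Cn$, at most $3C$ rows are heavy, and only heavy rows have $v_i \neq 0$. After this step, every row has at least $n/3$ zero entries, entries lie in $\{-2,\ldots,+2\}$, and the total support size is at most $3Cn$. Perform the analogous normalization on columns, now choosing modes from $\{-2,\ldots,+2\}$; since a column only receives a nonzero shift when its nonzeros strictly outnumber its zeros, column normalization does not increase the total support size. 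The resulting matrix has entries in $\{-4,\ldots,4\}$ (so $b=4$ works), at most $O_C(n)$ nonzeros, every row of degree $\leq 2n/3 + O(C)$, and every column of degree $\leq 4n/5$.

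\textbf{Step 2 (sporadic probability via factorial moments).} Let $X = |\{i : (i,\pi(i)) \in M\}|$ for a uniformly random $\pi \in S_n$. By rook-polynomial inclusion--exclusion,
\[
 \Pr[X = 0] = \sum_{k \geq 0} (-1)^k \frac{m_k}{n^{\underline{k}}},
\]
where $m_k$ counts matchings of size $k$ in $M$. Using $m_k \leq \binom{|M|}{k}$ together with the row and column degree bounds from Step 1, I will bound the factorial moments $\E[X^{(k)}] = k! \, m_k / n^{\underline{k}}$ uniformly and show that $X$ converges in distribution to a Poisson random variable with parameter $\mu := \lim_{n\to\infty} |M|/n \leq 3C$. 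This yields $\Pr[X=0] \to e^{-\mu} \geq e^{-3C}$, so choosing $\gamma := e^{-3C}/2$ and $N := N(C)$ large enough gives the claimed lower bound.

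\textbf{Main obstacle.} The crux is Step 2. The union bound $\Pr[X=0] \geq 1 - \E[X]$ is useless once $|M| \geq n$, which is allowed by the hypothesis, so I must use higher-order inclusion--exclusion and argue convergence to Poisson. The key enabling ingredient is the row/column degree bound established in Step 1: without it, a single saturated row or column could pin $\Pr[X=0]$ to zero, and the Poisson limit would not apply. Controlling the error terms in the Poisson convergence uniformly across all admissible supports $M$ of a given density $|M|/n$ is the most technically delicate part, and is what forces us to perform both row and column normalization rather than either alone.
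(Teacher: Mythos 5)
Your Step~1 (subtracting row and column modes) is essentially identical to the paper's construction, up to slightly different constants ($b=4$ instead of $b=3$, $3n/4$ or $4n/5$ as the per-line degree bound). The real content of the lemma, and the place where your proposal breaks down, is Step~2.

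The Poisson convergence claim in Step~2 is false, and the derived constant $\gamma = e^{-3C}/2$ is in general too large. Step~1 only yields a degree bound of roughly $3n/4$ per row and column, which still permits $\Theta(n)$ nonzeros concentrated in a constant number of lines, and in that regime $X$ does not become Poisson. Concretely, take $C=1$ and $M = \{1,\dots,6\} \times S$ with $|S| = n/2$. This satisfies your Step~1 conclusions: $|M| = 3n = 3Cn$ and every row has degree $n/2 < 3n/4$. But
\[
 \Pr[X=0] = \prod_{t=0}^{5} \frac{n/2 - t}{\,n - t\,} \xrightarrow[n\to\infty]{} 2^{-6} = \tfrac{1}{64} \approx 0.0156,
\]
whereas your claimed $\gamma = e^{-3}/2 \approx 0.0249$. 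So $\Pr[X=0] < \gamma$ and the proposed bound fails; moreover $\Pr[X=0]$ does not tend to $e^{-\mu} = e^{-3} \approx 0.0498$, so the distributional limit is wrong as well. The intuitive reason is that the factorial moments $k!\,m_k/n^{\underline{k}}$ do not converge to $\mu^k$ when a single line contains $\Theta(n)$ entries: picking one entry in a heavy row eliminates a constant fraction of the remaining support, which destroys the $m_k \approx |M|^k/k!$ approximation that Poisson convergence requires.

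What is missing is a mechanism to reduce the per-line degree from $\Theta(n)$ all the way down to $O_C(1)$ before any inclusion--exclusion is attempted. The paper does this with an adaptive two-stage sampling of $\pi$: in Stage~1 it performs $n/5$ iterations, each time fixing the image (or preimage) of the currently heaviest remaining row (or column) at a uniformly random unused value, and multiplies together survival probabilities $1 - m_t/(n-t)$; since $\sum_t m_t \leq |R|=O_C(n)$, the product is $\geq e^{-O(C)}$. Crucially, after $n/5$ such greedy deletions the remaining restricted set $R'$ has every row and column degree at most $40C$, because a degree-$s$ line forces $m_t \geq s$ for $\Omega(n)$ values of~$t$, hence $|R| \geq \Omega(n)\cdot s$. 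Only then, with degrees $O_C(1)$, does the Bonferroni/inclusion--exclusion estimate (essentially your Step~2) become sound; it contributes a second $\Omega(1)$ factor. Your proposal skips Stage~1 entirely and applies the Poisson heuristic directly to the matrix output by mode normalization, and that is exactly the step that fails.
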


The first step in the proof is constructing the new representation. An obstacle for the ``hitting'' property is a \emph{line} (row or column) in which all or almost all of the $e_{i,j}$ are non-zero. Accordingly, let $\alpha_i \in \{-1,0,1\}$ be a most common value of $e_{i,1},\ldots,e_{i,n}$, let $\beta_j \in \{-1,0,1\}$ be the most common value of $e_{1,j},\ldots,e_{n,j}$, and define $r_{i,j} = e_{i,j} - \alpha_i - \beta_j$, so that
\[
 g = e + \sum_{i=1}^n \sum_{j=1}^n (r_{i,j} + \alpha_i + \beta_j) x_{i,j} = e + \sum_{i=1}^n \alpha_i + \sum_{j=1}^n \beta_j + \sum_{i=1}^n \sum_{j=1}^n r_{i,j} x_{i,j}.
\]
Note that $r = e + \sum_i \alpha_i + \sum_j \beta_j \in \ZZ$, and $r_{i,j} \in \{0,\pm1,\pm2,\pm3\}$.

We claim that most of the coefficients $\alpha_i,\beta_j$ are equal to zero. Indeed, if $\alpha_i \neq 0$ then among $e_{i,1},\ldots,e_{i,n}$, at least $(2/3)n$ are non-zero. Therefore at most $Cn/(2/3)n = (3/2)C$ of the $\alpha_i$ are non-zero. Similarly, at most $(3/2)C$ of the $\beta_j$ are non-zero.
If $r_{i,j} \neq 0$ then at least one of $e_{i,j},\alpha_i,\beta_j$ is non-zero. This shows that the number of non-zero $r_{i,j}$ is at most $Cn + (3/2)Cn + (3/2)Cn = 4Cn$.

It remains to prove the hitting property: if $R = \{(i,j) \in [n]^2 : r_{i,j} \neq 0\}$, then with probability $\Omega(1)$, a random permutation $\pi$ ``avoids'' $R$, in the sense that $(i,\pi(i)) \notin R$ for all $i \in [n]$. We will prove this using two properties of $R$. First, as noted above, $|R| \leq 4Cn$. Second, we can bound the intersection of $R$ with any row or column.
Given $i \in [n]$, notice that by construction, $e_{i,1} - \alpha_i, \ldots, e_{i,n} - \alpha_i$ contains at most $(2/3)n$ many non-zero entries (since a most common value of $e_{i,1},\ldots,e_{i,n}$ must be represented by at least $n/3$ entries). Since at most $(3/2)C$ of the $\beta_j$ are non-zero, we conclude that $R$ contains at most $(2/3)n + (3/2)C$ entries on row $i$ (that is, of the form $(i,\cdot)$), which is at most $(3/4)n$ for an appropriate $N$. A similar property holds for columns.

\smallskip

We now show that if we sample a random permutation $\pi \in S_n$, then with probability $\Omega(1)$ it avoids $R$. We sample $\pi$ in two stages. The first stage consists of sampling $n/5$ carefully chosen entries of $\pi$, and in the second stage we sample the remaining $4n/5$ entries of $\pi$. We assume for simplicity that $n$ is divisible by~$10$.

The first stage consists of $n/5$ iterations, labelled by $t \in \{0,\ldots,n/5-1\}$, which alternate between row iterations (when $t$ is even) and column iterations (when $t$ is odd). At the $t$'th iteration, $t$ values of $\pi$ have been sampled so far. Let $I_t$ consist of those indices $i \in [n]$ on which $\pi(i)$ is undefined, let $J_t$ consist of those indices $j \in [n]$ on which $\pi^{-1}(j)$ is undefined, and let $R_t = R \cap (I_t \times J_t)$. If $t$ is even, let $i \notin I_t$ be a choice which maximizes the number of entries in the $i$'th row of $R_t$. Denote this number by $m_t$, and sample $\pi(i)$ uniformly among $J_t$. If $t$ is odd, let $j \notin J_t$ be a choice which maximizes the number of entries in the $j$'th column of $R_t$. Denote this number by $m_t$, and sample $\pi^{-1}(j)$ uniformly among $I_t$.

The first stage is \emph{successful} if the defined values of $\pi$ avoid $R$. This happens with probability
\[
 p = \prod_{t=0}^{n/5-1} \left(1 - \frac{m_t}{n-t}\right).
\]
Recall that $R$ contains at most $(3/4)n$ entries on each row and column. Thus $m_t \leq (3/4)n$, and so $m_t/(n-t) \leq (3/4)n/(4/5)n = 15/16$. Since $\log(1-x)/-x$ is increasing, it is not hard to check that $1 - m_t/(n-t) \geq e^{-3m_t/(n-t)} \geq e^{-4m_t/n}$, and consequently
\[
 p \geq e^{-\sum_{t=0}^{n/5-1} 4m_t/n} \geq e^{-16C},
\]
since the sum of $m_t$ is at most $|R| \leq 4Cn$.

\smallskip

Let $I' = I_{n/5}$, $J' = J_{n/5}$, and $R' = R_{n/5}$. Thus after the first stage, in order to complete the sampling process we need to sample $\pi(i)$ for each $i \in I'$, the allowed values being $J'$; and the newly sampled values must avoid $R'$, in which case we say that the second stage is \emph{successful}. Furthermore, $n' := |I'| = |J'| = 4n/5$. 

We claim that $R'$ contains very few entries on each row or column. Indeed, suppose that some row of $R'$ contains $s$ entries. This means that $m_t \geq s$ for the $n/10$ even $t$ in the range $0,\ldots,n/5-1$, since otherwise this row would have been chosen in one of the row iterations. Consequently, $|R| \geq (n/10)s$. Since $|R| \leq 4Cn$, we conclude that $s \leq 40C$. Similarly, every column of $R'$ contains at most $40C$ entries. Moreover, $|R'| \leq 40Cn'$.

If $|R'| \leq n'/2$ then we can lower-bound the probability that the second stage is successful using the union bound: denoting by $\pi'$ the part of $\pi$ left undefined after the first stage,
\[
 \Pr[\pi' \text{ hits } R'] \leq \sum_{(i,j) \in R} \Pr[\pi'(i) = j] \leq \frac{|R'|}{n'} \leq \frac{1}{2}.
\]
In order to lower-bound the probability that the second stage is successful in general (when $|R'| \geq n'/2$), we use truncated inclusion-exclusion (also known as the Bonferroni inequality). Let $d \in \NN$ be a parameter to be chosen later. Then
\[
 \Pr[\pi' \text{ hits } R'] \leq \sum_{k=0}^{2d} (-1)^k \sum_{\substack{\{(i_1,j_1),\ldots,(i_k,j_k)\} \subset R' \\ \text{all different}}} \Pr[\pi'(i_1) = j_1, \ldots, \pi'(i_k) = j_k].
\]
We stress that the sum is over \emph{unordered} $k$-tuples of entries of $R'$.
Let us say that a $k$-tuple of entries is \emph{consistent} if all of $i_1,\ldots,i_k$ are distinct and all of $j_1,\ldots,j_k$ are distinct. If a $k$-tuple is consistent then the probability that $\pi'(i_s) = j_s$ for all $s \in [k]$ is $1/n^{\prime\underline{k}} := 1/n'(n'-1)\cdots(n'-k+1)$, and otherwise the probability is~$0$. Hence if $N_k$ is the number of unordered consistent $k$-tuples of entries in $R'$,
\begin{equation} \label{eq:sporadic}
 \Pr[\pi' \text{ hits } R'] \leq \sum_{k=0}^{2d} (-1)^k \frac{N_k}{n^{\prime\underline{k}}}.
\end{equation}

Let $M_k$ be the number of \emph{ordered} consistent $k$-tuples of entries in $R'$. Then $N_k = M_k/k!$. We proceed to give upper and lower bounds on both $M_k$ and $n^{\prime\underline{k}}$. Clearly $n^{\prime\underline{k}} \leq n^{\prime k}$, and on the other hand,
\[
 n^{\underline{k}} = n^k \prod_{s=0}^{k-1} \left(1 - \frac{s}{n}\right) \geq \left(1 - \frac{k(k-1)}{2n'}\right) n^k.
\]
As for $M_k$, clearly $M_k \leq |R'|^k$. On the other hand, we can lower bound $M_k$ as follows. There are $|R'|$ choices for the first entry. Since each row and column of $R'$ contains at most $40C$ entries, there are at least $|R'|-80C$ choices for the second entry, $|R'|-160C$ choices for the third entry, and so on. In total,
\[
 M_k \geq \prod_{s=0}^{k-1} (|R'| - 40Cs) = |R'|^k \prod_{s=0}^{k-1} \left(1 - \frac{40Cs}{|R'|}\right) \geq \left(1 - \frac{O(Ck^2)}{|R'|}\right) |R'|^k \geq \left(1 - \frac{O(Ck^2)}{n'}\right) |R'|^k,
\]
using the assumption $|R'| \geq n'/2$.

Putting both estimates together, we get that for even $k$,
\[
 \frac{N_k}{n^{\prime \underline{k}}} \leq \left(1 - \frac{O(k^2)}{n}\right)^{-1} \frac{|R'|^k}{k! n^{\prime k}} \stackrel{(\ast)}\leq 
 \left(1 + \frac{O(k^2)}{n}\right) \frac{(|R'|/n')^k}{k!} \leq
 \frac{(|R'|/n')^k}{k!} + \frac{O(k^2 (40C)^k)}{k! n},
\]
where $(\ast)$ holds whenever $n$ is large enough as a function of $k$.
Similarly, for odd $k$,
\[
 \frac{N_k}{n^{\prime \underline{k}}} \geq
 \left(1 - \frac{O(Ck^2)}{n}\right) \frac{(|R'|/n')^k}{k!} \geq
 \frac{(|R'|/n')^k}{k!} - \frac{O(Ck^2 (40C)^k)}{k! n}.
\]

Substituting these estimates in~\eqref{eq:sporadic}, we obtain that if $n$ is large enough as a function of $d$ then
\[
 \Pr[\pi' \text{ hits } R'] \leq \sum_{k=0}^{2d} (-1)^k \frac{(|R'|/n')^k}{k!} + O\left(\frac{1}{n}\right) \sum_{k=0}^{2d} \frac{(1+C)k^2 (40C)^k}{k!}.
\]
We can bound the second term by $O_C(1/n)$ by extending the sum to infinity and noting that the series converges. As for the first term, it is a truncation of the Taylor series of $e^{-x}$ at $x = |R'|/n'$. Since the Taylor series is an alternating sum, we can bound
\[
 e^{-|R'|/n'} \geq \sum_{k=0}^{2d} (-1)^k \frac{(|R'|/n')^k}{k!} - \frac{(|R'|/n')^{2d+1}}{(2d+1)!}.
\]
Altogether, using $1/2 \leq |R'|/n' \leq 40C$ we obtain
\[
 \Pr[\pi' \text{ hits } R'] \leq e^{-1/2} + \frac{(40C)^{2d+1}}{(2d+1)!} + O_C\left(\frac{1}{n}\right).
\]

Let $e^{-1/2} = 1 - \delta$.
We choose $d$ so that the second term is at most $\delta/3$, and we choose $N$ (the lower bound on $n$) so that $n$ is large enough as a function of $d$ for the estimates above to hold, and furthermore the third term is at most $\delta/3$. We conclude that $\pi'$ avoids $R'$ with probability at least $\delta/3$, and so the second stage is successful with probability at least $\min(1/2,\delta/3) = \delta/3$. Altogether, $\pi$ avoids $R$ with probability at least $e^{-16C}(\delta/3)$.

\subsection{Sum of cosets} \label{sec:sum-of-cosets}

Applying \Cref{lem:L2-sparse} and \Cref{lem:sporadic} in sequence, we approximate $f$ with a function $g = r + \sum_{i,j} r_{i,j} x_{i,j}$, such that with constant probability, a random permutation avoids all non-zero $r_{i,j}$'s. This implies that $g = r$ with constant probability, and so $r \in \{0,1\}$ (for small enough $\epsilon$). In order to say something similar about the coefficients $r_{i,j}$, we need a similar hitting property that holds relative to a coset $(i,j)$.

\begin{lemma} \label{lem:sporadic-relative}
Let $R \subseteq [n]^2$, and suppose that $|R| \leq Cn$ and that with probability at least $\gamma > 0$, a random permutation $\pi$ avoids $R$, that is $(i,\pi(i)) \notin R$ for all $i \in [n]$. If $n \geq 8C/\gamma + 2$ then for all $i,j \in [n]$,
\[
 \Pr_{\pi \in S_n}[\pi \text{ avoids } R \setminus \{(i,j)\} \mid \pi(i) = j] \geq \gamma/2.
\]

Similarly, if $n \geq 32C/\gamma + 3$ then for all $i_1,j_1,i_2,j_2 \in [n]$ such that $i_1 \neq i_2$ and $j_1 \neq j_2$,
\[
 \Pr_{\pi \in S_n}[\pi \text{ avoids } R \setminus \{(i_1,j_1),(i_2,j_2)\} \mid \pi(i_1) = j_1 \text{ and } \pi(i_2) = j_2] \geq \gamma/4.
\]
\end{lemma}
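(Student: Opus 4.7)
My plan for the first inequality is a swapping argument whose main delicacy is keeping the constants sharp. For each permutation $\pi$ avoiding $R$, define $\phi(\pi)$ to equal $\pi$ if $\pi(i)=j$, and otherwise (with $j'=\pi(i)$, $i'=\pi^{-1}(j)$) the permutation obtained from $\pi$ by a single column swap, so that $\phi(\pi)(i)=j$, $\phi(\pi)(i')=j'$, and $\phi(\pi)(k)=\pi(k)$ for $k\notin\{i,i'\}$. By construction $\phi(\pi)(i)=j$, and since the swap only touches positions $i$ and $i'$, the image $\phi(\pi)$ fails to avoid $R\setminus\{(i,j)\}$ only when $(\pi^{-1}(j),\pi(i))\in R$.

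The counting is then routine but must be done tightly. The number of $\pi$ avoiding $R$ for which $(\pi^{-1}(j),\pi(i))\in R$ is at most $|R|(n-2)!\leq Cn(n-2)!$, since for each bad pair $(i',j')\in R$ with $i'\neq i$, $j'\neq j$, there are at most $(n-2)!$ permutations with $\pi(i)=j'$ and $\pi(i')=j$. The fibers of $\phi$ have size at most $n$: given $\pi'$ with $\pi'(i)=j$, a preimage is determined by choosing which original column $j''$ sat at position $i$, for $n$ choices in total. Combining, the number of $\pi'$ with $\pi'(i)=j$ that avoid $R\setminus\{(i,j)\}$ is at least $(\gamma-C/(n-1))(n-1)!$, and dividing by the $(n-1)!$ permutations with $\pi'(i)=j$ gives conditional probability at least $\gamma-C/(n-1)\geq\gamma/2$, which holds under the stated hypothesis $n\geq 8C/\gamma+2$.

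For the second inequality my plan is to iterate the first. Conditioning on $\pi(i_1)=j_1$ reduces the problem to one on $S_{n-1}$, after identifying bijections $[n]\setminus\{i_1\}\to[n]\setminus\{j_1\}$ with permutations of $[n-1]$; the relevant forbidden set is $R'''=R\cap(([n]\setminus\{i_1\})\times([n]\setminus\{j_1\}))$, and one checks that under the condition, avoiding $R\setminus\{(i_1,j_1)\}$ in $S_n$ is equivalent to avoiding $R'''$ in $S_{n-1}$. Then $|R'''|\leq Cn\leq 2C(n-1)$, and by the first part the avoidance probability inside $S_{n-1}$ is at least $\gamma/2$. A second application of the first part inside $S_{n-1}$, now with constants $(2C,\gamma/2)$, yields the required $\gamma/4$ bound provided $n-1\geq 8(2C)/(\gamma/2)+2=32C/\gamma+2$, matching the stated $n\geq 32C/\gamma+3$ exactly. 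Beyond the first-step bookkeeping the iteration is essentially automatic.
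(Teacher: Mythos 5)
Your argument is correct and is essentially the same swap/coupling argument as the paper's: both use the map $\phi$ that switches the image of $i$ with the preimage of $j$, bound the failure probability via the displaced pair $(i',j')$ landing in $R$, and prove the second part by iterating the first inside the coset $\pi(i_1)=j_1$ with $C\gets 2C$, $\gamma\gets\gamma/2$. The only difference is stylistic --- you count permutations and fiber sizes explicitly, whereas the paper phrases the same estimate as a probabilistic coupling --- and your version in fact gives the slightly tighter first-part bound $\gamma - C/(n-1)$ where the paper settles for $\gamma - Cn/(n-1)^2$.
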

\begin{proof}
Let $\pi'$ be a uniformly random permutation. We define a permutation $\pi$ such that $\pi(i) = j$ as follows. If $\pi'(i) = j$, then we simply define $\pi = \pi'$. Otherwise, let $\pi'(i) = j'$ and $\pi'(i') = j$. Then $\pi$ is obtained from $\pi'$ by setting $\pi(i) = j$ and $\pi(i') = j'$. By symmetry, $\pi$ is a uniformly random permutation such that $\pi(i) = j$, and furthermore, in the second case, $(i',j')$ is chosen uniformly over all pairs such that $i' \neq i$ and $j' \neq j$.

If the permutation $\pi$ hits $R \setminus \{(i,j)\}$ then either $\pi'$ hits $R$ or $(i',j') \in R$. The first event happens with probability at most $1 - \gamma$, and a union bound shows that the second event happens with probability at most $|R'|/(n-1)^2 \leq Cn/(n-1)^2 \leq 4C/n \leq \gamma/2$. Hence $\pi$ avoids $R \setminus \{(i,j)\}$ with probability at least $\gamma - \gamma/2 \geq \gamma/2$. This concludes the proof of the first part of the lemma.

\smallskip

To prove the second part of the lemma, note first that since $n \geq 8C/\gamma + 2$,
\[
 \Pr_{\pi \in S_n}[\pi \text{ avoids } R \setminus \{(i_1,j_1)\} \mid \pi(i_1) = j_1] \geq \gamma/2.
\]
Since $n \geq 2$, we have $|R \setminus \{(i_1,j_1)\}| \leq Cn \leq 2C(n-1)$. Thinking of the coset $(i_1,j_1)$ as a copy of $S_{n-1}$, we apply the lemma again, with $n \gets n-1$, $R \gets R \cap \overline{\{i_1\}} \times \overline{\{j_1\}}$, $C \gets 2C$, and $\gamma \gets \gamma/2$, to deduce the second part of the lemma.
\end{proof}

Using this property, we can uncover the structure of functions satisfying the premises of \Cref{lem:sporadic}. We state the following lemma in terms of $g$ rather than $f$ to facilitate its reuse in \Cref{sec:L0}.

Recall that an \emph{$\epsilon$-disjoint family of cosets} $\cC \subseteq [n]^2$ consists of $O(n)$ many cosets such that the number of pairs of cosets in $\cC$ which are not disjoint is $O(\epsilon n^2)$. In the ``direct'' part of \Cref{lem:sum-of-cosets}, the hidden big~O constants depends only on~$C$, and in the ``converse'' part, they can be arbitrary.

\begin{lemma} \label{lem:sum-of-cosets}
Let $g\colon S_n \to \ZZ$ be a function satisfying the premises of \Cref{lem:sporadic}. There exist constants $N \in \NN$ and $\epsilon_0 > 0$, possibly depending on $C$, such that the following holds whenever $n \geq N$ and $\epsilon \leq \epsilon_0$.

There is an $\epsilon$-disjoint family of cosets $\cC$, and a choice $G \in \{g,1-g\}$, such that the function $h = \sum_{(i,j) \in \cC} x_{i,j}$ satisfies $\Pr[G \neq h] = O(\epsilon)$ and $\E[(G-h)^2] = O(\epsilon)$.

Conversely, if $\cC$ is an $\epsilon$-disjoint family of cosets then $h = \sum_{(i,j) \in \cC} x_{i,j}$ satisfies $\Pr[h \notin \{0,1\}] = O(\epsilon)$ and $\E[\dist(h,\{0,1\})^2] = O(\epsilon)$.
\end{lemma}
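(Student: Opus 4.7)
The plan has two pieces, a direct part from $g$ to $\cC$ and an easy converse.

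\textbf{Direct part.} I would first apply Lemma~\ref{lem:sporadic} to replace the given representation of $g$ by one of the form $g = r + \sum r_{i,j} x_{i,j}$ with bounded-integer coefficients, support $R$ of size $O_C(n)$, and the property that a uniformly random $\pi$ avoids $R$ with probability at least a fixed $\gamma > 0$. On the ``avoid $R$'' event $g(\pi) = r$, so for $\epsilon_0 < \gamma$ we immediately get $r \in \{0,1\}$. For each $(i,j) \in R$, the single-coset version of Lemma~\ref{lem:sporadic-relative} says that $\pi$ hits $R$ \emph{exactly} at $(i,j)$ with probability at least $\gamma/(2n)$, and on this event $g(\pi) = r + r_{i,j}$. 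Summing these disjoint events over the $(i,j) \in R$ with $r + r_{i,j} \notin \{0,1\}$ against $\Pr[g \notin \{0,1\}] = \epsilon$, there are at most $O(\epsilon n)$ such ``bad'' cosets; for all other cosets in $R$, $r_{i,j} \in \{-r, 1-r\}$. Setting $\cC$ to be the good cosets with $r_{i,j} = 1 - 2r$ and $G = g$ when $r=0$, $G = 1-g$ when $r=1$, a direct check shows $G(\pi) = h(\pi)$ for $h := \sum_{\cC} x_{i,j}$ whenever $\pi$ avoids all bad cosets. A union bound gives $\Pr[G \neq h] = O(\epsilon)$, and a second-moment calculation on the number $X$ of bad hits, using $\E[X^2] = \E[X] + \E[X(X-1)] = O(\epsilon) + O(\epsilon^2)$, upgrades this to $\E[(G-h)^2] = O(\epsilon)$.

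That $\cC$ is $\epsilon$-disjoint then follows from the two-coset version of Lemma~\ref{lem:sporadic-relative}: for each non-disjoint pair $\{(i_1,j_1),(i_2,j_2)\} \subseteq \cC$, the event that $\pi$ hits $R$ exactly at these two cosets has probability at least $\gamma/(4n(n-1))$, and on it $g(\pi) = r + 2(1-2r) = 2 - 3r \notin \{0,1\}$. Summing these disjoint events against $\Pr[g \notin \{0,1\}] = \epsilon$ yields $O(\epsilon n^2)$ non-disjoint pairs; $|\cC| = O(n)$ is automatic since $\cC \subseteq R$.

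\textbf{Converse.} Given an $\epsilon$-disjoint $\cC$ and $h = \sum_{\cC} x_{i,j}$, the centerpiece is $\E[h(h-1)] = \sum_{(i,j) \neq (k,\ell) \in \cC} \E[x_{i,j} x_{k,\ell}]$. Only non-disjoint pairs contribute, each with $\E[x_{i,j} x_{k,\ell}] = 1/(n(n-1))$, so $\E[h(h-1)] = O(\epsilon)$. Since $h$ is a non-negative integer, $\Pr[h \notin \{0,1\}] = \Pr[h \geq 2] \leq \E[h(h-1)]/2 = O(\epsilon)$, and $\dist(h,\{0,1\})^2 = (h-1)^2[h \geq 2] \leq h(h-1)$, so $\E[\dist(h,\{0,1\})^2] \leq \E[h(h-1)] = O(\epsilon)$.

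\textbf{Main obstacle.} The subtle step is the $L_2$ bound on $G - h$: on a bad hit $(G-h)(\pi)$ can be as large as the coefficient bound $b$ from Lemma~\ref{lem:sporadic}, and several bad hits can coincide, so a union bound alone is insufficient. The second-moment argument above, combining the pairwise hit probability $1/(n(n-1))$ with $|R_{\mathrm{bad}}| = O(\epsilon n)$, resolves this, and its uniformity in $\epsilon$ and $n$ makes transparent that the implicit constants depend only on the $C$ from Lemma~\ref{lem:sporadic}, as required.
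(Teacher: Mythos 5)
Your proof is correct and follows essentially the same route as the paper: apply Lemma~\ref{lem:sporadic} to get the sporadic representation, use the single-coset part of Lemma~\ref{lem:sporadic-relative} to bound the number of bad cosets by $O(\epsilon n)$, use the two-coset part to bound non-disjoint pairs by $O(\epsilon n^2)$, and bound $\E[(G-h)^2]$ by expanding the square over $B$; the converse via $\E[h(h-1)]$ is also the paper's computation. The only cosmetic differences are that you condition on $\pi$ hitting $R$ exactly at a non-disjoint pair in $\cC$ (landing $g$ directly outside $\{0,1\}$) whereas the paper conditions on avoiding $\cC\setminus\{\cdot,\cdot\}$ and compares $\Pr[h=2]$ to $\Pr[g=2]+\Pr[g\neq h]$, and that you phrase the $L_2$ bound via $\E[X^2]$ rather than expanding $\E[(\sum_B r_{i,j}x_{i,j})^2]$ termwise; both variations are equivalent.
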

\begin{proof}
Let $b,N_1,\gamma$ be the constants promised by \Cref{lem:sporadic}, let $N_2 = 32C/\gamma+3$, and choose $N = \max(N_1,N_2)$.
Since $n \geq N_1$, \Cref{lem:sporadic} applies, and gives us a representation
\[
 g = r + \sum_{i=1}^n \sum_{j=1}^n r_{i,j} x_{i,j}
\]
with the stated properties. In particular, $\Pr[g = r] \geq \gamma$, and so if $\epsilon < \gamma$, we conclude that $r \in \{0,1\}$. If $r = 0$ then we take $G = g$, and otherwise we take $G = 1-g$. In the sequel, we assume for simplicity that $r = 0$; an identical argument works when $r = 1$.

Since $n \geq N_2$, we can apply \Cref{lem:sporadic-relative} to $R = \{(i,j) : r_{i,j} \neq 0\}$. Applying the lemma to any $(i,j) \in R$, we obtain
\[
 \Pr_{\pi \in S_n}[\pi(i) = j \text{ and } \pi \text{ avoids } R \setminus \{(i,j)\}] \geq \frac{\gamma/2}{n}.
\]
When this event happens, $g = r + r_{i,j} = r_{i,j}$. The events for different $(i,j) \in R$ are disjoint, and so if we denote $B = \{(i,j) \in R : r_{i,j} \neq 1\}$ then
\[
 \epsilon \geq \sum_{(i,j) \in B} \Pr_{\pi \in S_n}[\pi(i) = j \text{ and } \pi \text{ avoids } R \setminus \{(i,j)\}] \geq \frac{\gamma/2}{n} |B|,
\]
implying that $|B| \leq (2\epsilon/\gamma) n = O(\epsilon n)$.

We define $\cC = R \setminus B$. Since $\Pr[\pi(i) = j] = 1/n$ for each $(i,j) \in B$, the union bound shows that $\Pr[g \neq h] = O(\epsilon)$. Moreover,
\[
 \E[(g-h)^2] \leq \frac{1}{n} \sum_{(i,j) \in B} r_{i,j}^2 + \frac{1}{n(n-1)} \sum_{(i_1,j_1),(i_2,j_2) \in B} |r_{i_1,j_1} r_{i_2,j_2}| \leq \frac{b^2 |B|}{n} + \frac{b^2 |B|^2}{n(n-1)} = O(\epsilon).
\]

To complete the proof of the first part of the lemma, it remains to show that $\cC$ is $\epsilon$-disjoint. Clearly $|\cC| \leq |R| \leq Cn$. For every unordered pair $(i_1,j_1),(i_2,j_2) \in \cC$ of distinct non-disjoint cosets, since $n \geq N_2$ we can apply \Cref{lem:sporadic-relative} to obtain
\[
 \Pr_{\pi \in S_n}[\pi(i_1) = j_1 \text{ and } \pi(i_2) = j_2 \text{ and } \pi \text{ avoids } \cC \setminus \{(i_1,j_1),(i_2,j_2)\}] \geq \frac{\gamma/4}{n(n-1)}.
\]
When this event happens, $g = 2$. These events are disjoint for different unordered pairs, and so if we denote by $P$ the number of such unordered pairs then
\[
 \Pr[h = 2] \geq \frac{\gamma/4}{n(n-1)} P.
\]
On the other hand, $\Pr[h = 2] \leq \Pr[g = 2] + \Pr[g \neq h] = O(\epsilon)$, and so $P \leq O(\epsilon n^2)$. This shows that $\cC$ is $\epsilon$-disjoint.

\smallskip

Let us now turn to the converse part. Suppose that $\cC$ is an $\epsilon$-disjoint family of cosets, and let $h = \sum_{(i,j) \in \cC} x_{i,j}$. Again denoting by $P$ the number of unordered pairs of non-disjoint cosets in $\cC$, the union bound shows that
\[
 \Pr[h \notin \{0,1\}] = \Pr[h \geq 2] \leq \frac{P}{n(n-1)} = O(\epsilon).
\]

In order to bound $\E[\dist(h, \{0,1\})^2]$, we first note that if $z \in \NN$ then $\dist(z, \{0,1\})^2 = \Theta(z(z-1))$. Hence it suffices to bound $\E[h(h-1)]$:
\[
 \E[h(h-1)] = \E[h^2] - \E[h] = \frac{|\cC|}{n} + \frac{2P}{n(n-1)} - \frac{|\cC|}{n} = O(\epsilon). \qedhere
\]
\end{proof}

\Cref{lem:sum-of-cosets} forms the bulk of the proof of \Cref{thm:main-1,thm:main-2}.

\subsection{Constant or dictator} \label{sec:constant-or-dictator}

In order to prove \Cref{thm:main-1a,thm:main-2a}, we will analyze the function $h$ constructed in \Cref{lem:sum-of-cosets}.

Recall that a \emph{dictator} is a function of the form $\sum_{(i,j) \in \cC} x_{i,j}$ where the cosets in $\cC$ are disjoint, that is, all of them are on the same row, or all of them are on the same column.

\begin{lemma} \label{lem:constant-or-dictator}
Let $\cC \subseteq [n]^2$ be an $\epsilon$-disjoint family of cosets, and let $h = \sum_{(i,j) \in \cC} x_{i,j}$.

For every $\delta \geq \sqrt{\epsilon}$, either $\Pr[h \neq 0] = O(\delta)$ and $\E[h^2] = O(\delta)$, or there exists a dictator $H$ such that $\Pr[h \neq H] = O(\epsilon/\delta)$ and $\E[(h - H)^2] = O(\epsilon/\delta)$.
\end{lemma}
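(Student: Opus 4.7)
The plan is to dichotomize on the size $m := |\cC|$ relative to $\delta n$, choosing a threshold $K_0\delta n$ for a sufficiently large constant $K_0$ to be determined. In the \emph{small case} $m \leq K_0\delta n$, I bound $h$ directly: $\E[h] = m/n = O(\delta)$ yields $\Pr[h \neq 0] \leq \E[h] = O(\delta)$ by Markov, and
\[
  \E[h^2] = \frac{m}{n} + \frac{2 \cdot (\text{non-disjoint pairs in }\cC)}{n(n-1)} = O(\delta) + O(\epsilon) = O(\delta),
\]
using $\epsilon \leq \delta^2 \leq \delta$ (we may assume $\delta \leq 1$, since the conclusion is otherwise trivial).

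In the \emph{large case} $m > K_0\delta n$, I view $\cC$ as a bipartite graph $G$ on row-vertices $[n]$ and column-vertices $[n]$, each coset $(i,j)$ being an edge. Two distinct cosets are disjoint precisely when they share a vertex in $G$, so the hypothesis translates to $\sum_v \binom{d_v}{2} \geq \binom{m}{2} - O(\epsilon n^2)$, equivalently $\sum_v d_v^2 \geq m^2 + m - O(\epsilon n^2)$, where $d_v$ denotes the degree of $v$. Writing $d := \max_v d_v$, the crude inequality $\sum_v d_v^2 \leq d \cdot \sum_v d_v = 2md$ yields a first-pass estimate $d \geq (m+1)/2 - O(\epsilon n^2/m)$, which under $m \geq K_0\delta n$ and $\epsilon \leq \delta^2$ simplifies to $d \geq m/2 - O(\delta n) \geq m/4$ for $K_0$ large enough.

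The heart of the argument is a bootstrap. Let $v_0$ achieve the maximum degree $d$, and assume without loss of generality that $v_0$ is a row-vertex. For any edge $e = (i,j)$ with $i \neq v_0$, at most one of the $d$ edges in the star at $v_0$ shares a column with $e$, so $e$ contributes at least $d-1$ non-disjoint pairs. Summing over the $m - d$ edges outside the star yields $(m-d)(d-1) = O(\epsilon n^2)$; plugging in $d \geq m/4$ gives the sharp estimate $m - d = O(\epsilon n^2/m) = O(\epsilon n/\delta)$. Setting $\cC_0 = \{(v_0,j) \in \cC\}$, the function $H := \sum_{(i,j) \in \cC_0} x_{i,j}$ is a dictator, and $h - H = \sum_{(i,j) \in \cC \setminus \cC_0} x_{i,j} \geq 0$ satisfies $\E[h-H] = |\cC \setminus \cC_0|/n = O(\epsilon/\delta)$. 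Markov then gives $\Pr[h \neq H] = O(\epsilon/\delta)$, and expanding the square as in the converse part of \Cref{lem:sum-of-cosets} yields $\E[(h-H)^2] = O(|\cC \setminus \cC_0|/n) + O(|\cC \setminus \cC_0|^2/n^2) = O(\epsilon/\delta) + O(\epsilon^2/\delta^2) = O(\epsilon/\delta)$, the last equality absorbing $\epsilon/\delta^2 \leq 1$ from the hypothesis $\delta \geq \sqrt{\epsilon}$.

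The main obstacle is precisely this bootstrap. A one-shot application of the second-moment inequality alone delivers only $d \geq m/2 - O(\delta n)$, leaving $|\cC \setminus \cC_0|$ as large as $\Theta(m)$ and producing the vacuous estimate $\Pr[h \neq H] = O(1)$. It is only after feeding the coarse linear-in-$m$ bound back into the \emph{edge-by-edge} pair count---which exploits the fact that a single non-star edge is disjoint from at most one star edge---that one recovers the sharp $m - d = O(\epsilon n/\delta)$ needed for the dictator approximation.
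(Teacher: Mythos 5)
Your proof is correct, and it lands on the same basic structure as the paper's (find a ``heavy line'' --- a row or column supporting most of $\cC$ --- and bound the cosets outside it via the observation that each contributes $d-1$ non-disjoint pairs against the star), but you get there by a different dichotomy. The paper cases on whether a heavy line exists: if not, it shows directly by pair-counting that $|\cC|$ must be $O(\delta n)$; if so, the heavy line is handed to it. You instead case on $|\cC|$, handling the small case trivially, and then in the large case you must \emph{manufacture} a heavy line. Your second-moment argument ($\sum_v d_v^2 \geq m^2 - O(\epsilon n^2)$ forcing $\max_v d_v \gtrsim m/2$) is the contrapositive of the paper's ``no heavy line $\Rightarrow$ $m$ small'' step, packaged as a degree-sequence inequality rather than a direct pair count. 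The two are logically interchangeable; yours adds the first-pass/bootstrap structure, which makes the role of the non-disjoint-pair hypothesis a bit more explicit, at the cost of a slightly longer argument. One small point worth tightening: when you write ``plugging in $d \geq m/4$ gives $m-d = O(\epsilon n^2/m)$,'' you are dividing by $d-1$, which is only comparable to $d$ when $d \geq 2$. This is fine --- your first-pass estimate in fact yields $d > m/4 + 1/2$, which forces $d \geq 2$ whenever $m \geq 2$ (and $m=1$ is already a dictator) --- but the step deserves a sentence, since it is precisely the sort of boundary case the paper handles by first disposing of $\delta < 1/(\sqrt{L}n)$.
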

\begin{proof}
Let us first notice that
\[
 \E[h^2] \leq \frac{|\cC|}{n} + \frac{|\cC| (|\cC| - 1)}{n(n-1)} = O(1).
\]
This completes the proof when $\delta \geq 1$, since $\Pr[h \neq 0] \leq 1 \leq \delta$ and $\E[h^2] = O(1) = O(\delta)$. From now on, we assume that $\delta < 1$. The lemma also trivially holds when $n = 1$, so we can assume that $n \geq 2$.


By definition of $\epsilon$-disjoint family, there exists a constant $L>0$ such that the number of unordered pairs of non-disjoint cosets in $\cC$ is at most $L\epsilon n^2$.
Since $\delta \geq \sqrt{\epsilon}$, if $\delta < 1/\sqrt{L}n$ then $L\epsilon n^2 \leq L\delta^2 n^2 < 1$, and so $h$ is a dictator, completing the proof. From now on, we assume that $\delta \geq 1/\sqrt{L}n$.

\smallskip

Let $K > 0$	be a constant to be determined. We say that a line (row or column) of $\cC$ is \emph{heavy} if it contains at least $K\delta n$ entries.

Suppose first that there are no heavy lines. Then every row and column contains at most $K\delta n$ cosets in $\cC$. Each coset is thus disjoint from at most $2K\delta n$ many cosets in $\cC$, and so the number of unordered pairs of non-disjoint cosets in $\cC$ is at least $|\cC| (|\cC| - 2K\delta n)/2$. If $|\cC| \geq 4K\delta n$ then $|\cC| (|\cC| - 2K\delta n)/2 \geq |\cC|^2/4 \geq 4K^2 \delta^2 n \geq 4K^2\epsilon n$, which contradicts the upper bound $L \epsilon n^2$ for $K > \sqrt{L}/2$. Hence $|\cC| \leq 4K\delta n$.

We claim that in this case, $\Pr[h \neq 0] = O(\delta)$ and $\E[h^2] = O(\delta)$. Indeed, the union bound shows that $\Pr[h \neq 0] \leq 4K\delta  = O(\delta)$, and
\[
 \E[h^2] \leq \frac{|\cC|}{n} + \frac{L \epsilon n^2}{n(n-1)} = O(\delta + \epsilon) = O(\delta),
\]
since $\epsilon \leq \delta^2 < \delta$.

Suppose next that $\cC$ contains some heavy line, say $\cL$. Every entry in $\cC \setminus \cL$ is disjoint from at most one entry of $\cL$, and so the number of unordered pairs of non-disjoint cosets in $\cC$ is at least $|\cC \setminus \cL| (|\cL| - 1)$. Since $\delta \geq 1/\sqrt{L} n$, we have
\[
 |\cL| - 1 \geq K\delta n \left(1-\frac{1}{K\delta n}\right) \geq K\delta n \left(1-\frac{1}{K/\sqrt{L}}\right),
\]
which is at least $K\delta n/2$ for $K \geq 2\sqrt{L}$. In total, the number of unordered pairs of non-disjoint cosets is at least $|\cC \setminus \cL| K\delta n/2$.
Since this number is at most $L\epsilon n^2$, we conclude that $|\cC \setminus \cL| \leq 2(L/K)(\epsilon/\delta)n$.

The required dictator is $H = \sum_{(i,j) \in \cL} x_{i,j}$. The union bound shows that $\Pr[h \neq H] \leq 2(L/K)(\epsilon/\delta) = O(\epsilon/\delta)$. Moreover,
\[
 \E[(h - H)^2] \leq \frac{|\cC \setminus \cL|}{n} + \frac{L \epsilon n^2}{n(n-1)} = O(\epsilon/\delta + \epsilon) = O(\epsilon/\delta),
\]
since $\delta < 1$.
\end{proof}

\subsection{Main theorems} \label{sec:L2-main}

Let us recall the characterization of Boolean linear functions due to Ellis, Friedgut and Pilpel~\cite{EFP11} (for the definition of dictator, see \Cref{sec:constant-or-dictator}).

\begin{theorem} \label{thm:EFP}
 If $f\colon S_n \to \{0,1\}$ is linear then $f$ is a dictator.
\end{theorem}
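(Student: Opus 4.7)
The plan is to derive \Cref{thm:EFP} as the exact $\epsilon = 0$ specialization of the chain \Cref{lem:L2-sparse}, \Cref{lem:sporadic}, \Cref{lem:sum-of-cosets} already developed in this section, bypassing the representation-theoretic argument of \cite{EFP11}. Since $f$ is Boolean we have $\E[\dist(f,\{0,1\})^2] = 0$, and this vanishing propagates through every subsequent $\epsilon$ parameter in the chain.

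For $n$ below the absolute thresholds $N$ required by \Cref{lem:sporadic} and \Cref{lem:sum-of-cosets}, the set of Boolean linear functions on $S_n$ is finite and one verifies the dictator property by direct inspection, so assume $n$ exceeds these thresholds. Applying \Cref{lem:L2-sparse} with $\epsilon = 0$ yields $g = e + \sum_{i,j} e_{i,j} x_{i,j}$ with $\E[(f-g)^2] = 0$, hence $f = g$ pointwise, and with $\Pr[g \notin \{0,1\}] = 0$. This $g$ meets the hypotheses of \Cref{lem:sporadic} with $\epsilon = 0$, giving the alternative sparse-hitting representation $g = r + \sum_{i,j} r_{i,j} x_{i,j}$. \Cref{lem:sum-of-cosets} then outputs a $0$-disjoint family $\cC$ and some $G \in \{f, 1-f\}$ with $\Pr[G \neq h] = O(0) = 0$, where $h = \sum_{(i,j) \in \cC} x_{i,j}$; pointwise equality follows since the underlying space is finite.

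By definition, a $0$-disjoint family has no non-disjoint pairs, so any two distinct cosets in $\cC$ are disjoint. A short inductive argument then forces either every coset in $\cC$ to share the same first coordinate, or every coset to share the same second coordinate, making $h$ a dictator. If $G = f$ we are done. Otherwise $f = 1 - h$; writing $h = \sum_{j \in J} x_{i,j}$ (the row case; the column case is symmetric) and using the identity $\sum_{j \in [n]} x_{i,j} \equiv 1$ yields $f = \sum_{j \notin J} x_{i,j}$, again a dictator. The only real subtlety is the small-$n$ base case, which is a purely finite verification since $N$ is an absolute constant.
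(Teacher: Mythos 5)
The paper does not prove this theorem at all; it simply cites Ellis, Friedgut, and Pilpel~\cite{EFP11}, who establish it by representation-theoretic means. Your proposal to re-derive it as the $\epsilon = 0$ specialization of the $L_2$ machinery (\Cref{lem:L2-sparse}, \Cref{lem:sporadic}, \Cref{lem:sum-of-cosets}) is a genuinely different route, and for $n \geq N$ the chain you trace is sound: every $O(\epsilon)$ error collapses to zero, the resulting coset family is literally pairwise disjoint, a short argument forces all cosets onto one row or one column, and your handling of the $G = 1-f$ case via the identity $\sum_{j\in[n]} x_{i,j} \equiv 1$ correctly produces a dictator. This is a nice observation: the quantitative theorems of the paper do subsume the exact theorem once $n$ is past the threshold.

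The gap is the base case $n < N$. You dismiss it as ``a purely finite verification,'' but that is an assertion, not a proof. The constant $N$ that emerges from \Cref{lem:sporadic} and \Cref{lem:sum-of-cosets} is absolute but potentially enormous, and for each fixed $n < N$ one would be enumerating the $2^{n!}$ Boolean functions on $S_n$, testing each for linearity and dictatorship---nothing like a feasible inspection, and you offer no shortcut. This is not an incidental loose end: the paper's own proofs of the main theorems invoke \Cref{thm:EFP} precisely to cover the range $n < N$ (e.g.\ in the proof of \Cref{thm:main-2}: ``Consequently, $g$ is Boolean, and so a dictator by \Cref{thm:EFP}''), so the small-$n$ regime is exactly the place where the external theorem is load-bearing. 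Your argument reproduces EFP for $n \geq N$ but does not remove the dependence on~\cite{EFP11}; as written it is circular at the base case, or else incomplete there.
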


We can now prove \Cref{thm:main-1,thm:main-2,thm:main-1a,thm:main-2a}. We start with \Cref{thm:main-2,thm:main-2a}.
Recall that $f$ is $\epsilon$-close to Boolean if $\E[\dist(f,\{0,1\})^2] \leq \epsilon$. 

\maintwo*

\begin{proof}
 The converse follows directly from \Cref{lem:sum-of-cosets}, so it suffices to prove the first part of the theorem.

 The arguments below require that $n \geq N$ and $\epsilon \leq \epsilon_0$, for some constants $N \in \NN$ and $\epsilon_0 > 0$ that derive from the various lemmas. We first dispense with the case $\epsilon > \epsilon_0$.

 If $\epsilon > \epsilon_0$ then we can choose $\cC = \emptyset$, and so $g = 0$, since $\E[f^2] = O(\E[\dist(f,\{0,1\})^2] + \E[\round(f,\{0,1\})^2]) = O(\epsilon + 1) = O(\epsilon)$. From now on, we assume that $\epsilon \leq \epsilon_0$.

 \Cref{lem:L2-sparse} constructs a function $g$ of a certain form such that $\E[(f-g)^2] = O(\epsilon)$ and $\Pr[g \notin \{0,1\}] = O(\epsilon)$.
 
 We can now dispense with the case $n < N$. We can assume that $\epsilon_0 \leq c/N!$, where $c>0$ is chosen so that $\Pr[g \notin \{0,1\}] < 1/N!$. Consequently, $g$ is Boolean, and so a dictator by \Cref{thm:EFP}. This completes the proof of the theorem when $n < N$. From now on, we assume that $n \geq N$.
 
 \Cref{lem:sporadic} shows that $g$ can be written in an equivalent form, which allows application of \Cref{lem:sum-of-cosets}. According to the latter, there is an $\epsilon$-disjoint family of cosets $\cC$ and a choice $G \in \{g,1-g\}$ such that $h = \sum_{(i,j) \in \cC} x_{i,j}$ satisfies $\E[(G - h)^2] = O(\epsilon)$. If $G = g$ then $\E[(f - h)^2] = O(\E[(f - g)^2] + \E[(g - h)]^2) = O(\epsilon)$, completing the proof, so suppose that $G = 1-g$.
 
 When $G = 1-g$, we have $\E[((1-f)-h)^2] = O(\E[((1-f)-(1-g))^2] + \E[((1-g)-h)^2]) = O(\epsilon)$, and so $(\E[1-f] - \E[h])^2 = O(\epsilon)$. This implies that $\E[h] \geq \E[1-f] - O(\epsilon) \geq 1/2 - O(\epsilon)$, and so $\E[h] \geq 1/3$ for an appropriate choice of $\epsilon_0$. Since $h \in \ZZ$, also $\E[h^2] \geq \E[h] \geq 1/3$. Applying \Cref{lem:constant-or-dictator} for an appropriate constant $\delta$, we obtain that some dictator $H$ satisfies $\E[(h - H)^2] = O(\epsilon)$, and so $\E[(f-(1-H))^2] = \E[((1-f)-H)^2] = O(\E[((1-f)-h)^2] + \E[(h-H)^2]) = O(\epsilon)$. This completes the proof, since $1 - H$ is also a dictator.
\end{proof}

\maintwoa*

\begin{proof}
 We can assume that $\E[f] \leq 1/2$, since otherwise we can repeat the argument with $1-f$, using the fact that if $H$ is a dictator then so is $1-H$. Moreover, clearly $\delta \leq 1-\delta$ and so $\delta \leq 1/2$.

 As in the proof of \Cref{thm:main-2}, we can assume that $n \geq N$ and $\epsilon \leq \epsilon_0$. The proof constructs a function $g$ satisfying $\E[(f-g)^2] = O(\epsilon)$, and shows that there is an $\epsilon$-disjoint family of cosets $\cC$ and a choice $G \in \{g,1-g\}$ such that $h = \sum_{(i,j) \in \cC} x_{i,j}$ satisfies $\E[(G - h)^2] = O(\epsilon)$.
 
 Suppose first that $G = g$. Then $\E[(f - h)^2] = O(\epsilon)$, and so $(\E[f] - \E[h])^2 = O(\epsilon)$, implying that $\E[h] \geq \E[f] - O(\sqrt{\epsilon}) \geq \delta/2$, for an appropriate choice of $K$. Since $h \in \ZZ$, also $\E[h^2] \geq \E[h] \geq \delta/2$. Applying \Cref{lem:constant-or-dictator} with $\delta \gets K'\delta$ for an appropriate choice of $K'$, we obtain that $\E[(h - H)^2] = O(\epsilon/\delta)$ for some dictator $H$, and so $\E[(f - H)^2] = O(\epsilon/\delta + \epsilon) = O(\epsilon/\delta)$, since $\delta \leq 1/2$.
 
 If $G = 1 - g$ then $\E[((1-f) - h)^2] = O(\epsilon)$ and so $(\E[1-f] - \E[h])^2 = O(\epsilon)$, implying that $\E[h] \geq \E[1-f] - O(\sqrt{\epsilon}) \geq 1/2 - O(\sqrt{\epsilon}) \geq 1/3$, for an appropriate choice of $\epsilon_0$. Applying the argument of the case $g = G$ with $\delta \gets 2/3$, we obtain that $\E[(1-f) - H)^2] = O(\epsilon) = O(\epsilon/\delta)$ for some dictator $H$. This completes the proof, since $\E[(f - (1-H))^2] = O(\epsilon/\delta)$, and $1-H$ is also a dictator.
\end{proof}

Finally, we derive \Cref{thm:main-1,thm:main-1a}. Recall that $f$ is $\epsilon$-close to linear if $\E[(f^{>1})^2] \leq \epsilon$.

\mainone*

\begin{proof}
 We start with the first part of the theorem. 
 Let $F = f^{\leq 1}$. By assumption, $\E[(f - F)^2] \leq \epsilon$, and so $\E[\dist(f,\{0,1\})^2] \leq \E[(f - F)^2] \leq \epsilon$. Moreover, $\E[F] = \E[f] \leq 1/2$. Applying \Cref{thm:main-2}, there is an $\epsilon$-disjoint family of cosets $\cC$ such that $G = \sum_{(i,j) \in \cC} x_{i,j}$ satisfies $\E[(F - G)^2] = O(\epsilon)$. Therefore $\E[(f - G)^2] = O(\E[(f - F)^2] + \E[(F - G)^2]) = O(\epsilon)$. Since $f$ is Boolean and $G \in \ZZ$, this implies that $\Pr[f \neq G] = O(\epsilon)$.
 If we define $g = \max_{(i,j) \in \cC} x_{i,j}$ then $f = G$ implies $f = g$, and so $\Pr[f \neq g] \leq \Pr[f \neq G] = O(\epsilon)$.
 
 We move on to the converse part. Suppose that $\cC$ is an $\epsilon$-disjoint family of cosets, let $G = \sum_{(i,j) \in \cC} x_{i,j}$, and let $g = \max_{(i,j) \in \cC} x_{i,j}$. According to \Cref{thm:main-2}, $\E[\dist(G,\{0,1\})^2] = O(\epsilon)$. Since $g = \round(G, \{0,1\})$, this shows that $\E[(g - G)^2] = O(\epsilon)$, and so $g$ is $O(\epsilon)$-close to linear.
\end{proof}

\mainonea*

\begin{proof}
 Let $F = f^{\leq 1}$, so that $\E[\dist(f,\{0,1\})^2] \leq \E[(f-F)^2] \leq \epsilon$ and $\E[F] = \E[f]$. Applying \Cref{thm:main-2a}, we obtain that $\E[(F-g)^2] = O(\epsilon/\delta)$ for some dictator $g$. Thus $\E[(f-g)^2] = O(\E[(f-F)^2] + \E[(F-g)^2]) = O(\epsilon + \epsilon/\delta) = O(\epsilon/\delta)$ (since $\delta \leq 1/2$). Since $f,g$ are both Boolean, this implies that $\Pr[f \neq g] = O(\epsilon/\delta)$.
\end{proof}

\section{Approximation in \texorpdfstring{$L_0$}{L0}} \label{sec:L0}

In this section we prove \Cref{thm:main-3,thm:main-3a}. Most of the effort is dedicated to proving an $L_0$~analog of \Cref{lem:L2-sparse}, which we do in \Cref{sec:L0-sparse} using the $L_0$~analog of the FKN theorem on the cube, proved in \Cref{sec:L0-FKN}, and a new take on the reduction to the cube, which appears in \Cref{sec:L0-reduction}. We then derive \Cref{thm:main-3,thm:main-3a} in \Cref{sec:L0-main}.

\subsection{FKN theorem on the cube} \label{sec:L0-FKN}

We start by proving an $L_0$~analog of the Friedgut--Kalai--Naor theorem on the Boolean cube.

\begin{lemma} \label{lem:L0-FKN}
 Let $f\colon \{0,1\}^n \to \RR$ be given by
\[
 f(x_1,\ldots,x_n) = c + \sum_{i=1}^n c_i x_i.
\]
 If $\Pr[f \notin \{0,1\}] < 1/4$ then either $f \in \{0, 1\}$ or $f \in \{x_i, 1-x_i\}$ for some $i \in [n]$.
 
 In particular, $c_1,\ldots,c_n \in \{0,\pm1\}$, and at most one of $c_1,\ldots,c_n$ is non-zero.
\end{lemma}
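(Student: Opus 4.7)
The plan is to prove the lemma by analyzing the behavior of $f$ on edges (pairs of points differing in one coordinate) and on 2-dimensional subcubes (quadruples differing in two coordinates). The hypothesis $\Pr[f \notin \{0,1\}] < 1/4$ will be exploited by a simple pigeonhole over a partition of $\{0,1\}^n$ into such subcubes.

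First I would establish that $c_i \in \{-1,0,1\}$ for every $i$. Pair up the $2^n$ points into $2^{n-1}$ edges in direction $i$; on each edge the two values of $f$ differ by exactly $c_i$. If $c_i \notin \{-1,0,1\}$, then no pair of points $y, y' \in \{0,1\}$ satisfies $y' - y = c_i$, so each edge contains at least one non-Boolean endpoint, forcing $\Pr[f \notin \{0,1\}] \geq 1/2$, a contradiction.

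Next I would show that at most one $c_i$ is non-zero. Suppose toward contradiction that $c_i, c_j \neq 0$ for some $i \neq j$; by step one, $c_i, c_j \in \{\pm 1\}$. Partition $\{0,1\}^n$ into the $2^{n-2}$ subcubes obtained by fixing all coordinates outside $\{i,j\}$; on each such subcube the four values of $f$ are $c', c'+c_i, c'+c_j, c'+c_i+c_j$ for some $c'$ depending on the fixed coordinates. A quick check in each of the four sign combinations for $(c_i, c_j)$ shows that the set $\{c', c'+c_i, c'+c_j, c'+c_i+c_j\}$ always contains three values of the form $v-1, v, v+1$ (or $v, v+1, v+2$), which cannot simultaneously lie in $\{0,1\}$. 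Hence every one of the $2^{n-2}$ subcubes contains at least one non-Boolean point, giving $\Pr[f \notin \{0,1\}] \geq 1/4$ and contradicting the strict inequality in the hypothesis.

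Finally I would pin down the constant $c$ by case analysis. If all $c_i = 0$, then $f \equiv c$ is constant; the hypothesis $\Pr[f \notin \{0,1\}] < 1/4$ forces $c \in \{0,1\}$, so $f \in \{0,1\}$. If exactly one coefficient is non-zero, say $c_i = 1$, then $f$ takes values $c$ and $c+1$ each with probability $1/2$, so both must lie in $\{0,1\}$, forcing $c=0$ and $f = x_i$; symmetrically $c_i = -1$ forces $c=1$ and $f = 1 - x_i$. This exhausts the cases and yields the stated conclusion, with the ``in particular'' clause following immediately. The proof has no real obstacle; the only subtle point is that the conclusion on squares requires the strict inequality $< 1/4$, not just $\leq 1/4$, since each non-trivial case realizes exactly one non-Boolean point per square.
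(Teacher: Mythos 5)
Your proof is correct and follows essentially the same approach as the paper: establish $c_i \in \{0,\pm1\}$ by partitioning into $i$-edges, establish that at most one $c_i$ is nonzero by partitioning into $\{i,j\}$-subcubes, and pin down $c$ by case analysis. The phrasing differs slightly (explicit partitions vs.\ ``for each assignment to the other variables''), but the underlying counting is identical.
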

\begin{proof}
 We start by showing that $c_1,\ldots,c_n \in \{0,\pm 1\}$. If $c_i \notin \{0,\pm 1\}$, then for each assignment to all variables other than $x_i$, there is at least one assignment to $x_i$ such that $f(x_1,\ldots,x_n) \notin \{0,1\}$, and so $\Pr[f \notin \{0,1\}] \geq 1/2$, contradicting the assumption.
 
 Similarly, we claim that at most one of $c_1,\ldots,c_n$ can be non-zero. If $c_i,c_j \neq 0$, then $c_i x_i + c_j x_j$ attains at least three different values as $x_i,x_j \in \{0,1\}$.
 Therefore for each assignment to all variables other than $x_i,x_j$, there is at least one assignment to $x_i,x_j$ such that $f(x_1,\ldots,x_n) \notin \{0,1\}$, and so $\Pr[f \notin \{0,1\}] \geq 1/4$, contradicting the assumption.
 
 We are left with three options: $f = c$, $f = c + x_i$, $f = c - x_i$. In the first case, necessarily $c \in \{0,1\}$. In the second case, necessarily $c = 0$, since otherwise $\Pr[f \notin \{0,1\}] \geq 1/2$. Similarly, in the third case, necessarily $c = 1$.
\end{proof}

\Cref{cor:FKN} has the useful feature that if $N \geq 2$ of $c_1,\ldots,c_n$ are far from zero (more than $1/2$ in absolute value) then $\epsilon = \Omega(N - 1)$. In contrast, even if all of $c_1,\ldots,c_n$ are non-zero then we cannot improve on $\Pr[f \notin \{0,1\}] \leq 1$ (the bound~$1$ should be compared to the bound $\Omega(N - 1)$). This makes it harder to approximate $f$ by a sparse sum in the $L_0$~setting.

\subsection{Reduction to the cube} \label{sec:L0-reduction}

Our proof will require a slightly different take on the reduction to the cube described in \Cref{sec:L2-reduction}, together with an argument along the lines of \Cref{lem:sporadic-relative}.

Recall the definition of $S_{\va,\vb}$. Given two permutations $\va,\vb \in S_n$, the subset $S_{\va,\vb} \subset S_n$ consists of all permutations which map $\{\va_{2t-1},\va_{2t}\}$ to $\{\vb_{2t-1},\vb_{2t}\}$ for all $t \in \lfloor n/2 \rfloor$.

We call a set of the form $\{a_1,a_2\} \times \{b_1,b_2\}$, where $a_1,a_2,b_1,b_2 \in [n]$, $a_1 \neq a_2$, and $b_1 \neq b_2$, a \emph{square}. A \emph{singleton} is a set of the form $\{a\} \times \{b\}$, where $a,b \in [n]$. Two squares $\{a_1,a_2\} \times \{b_1,b_2\}, \{a_3,a_4\} \times \{b_3,b_4\}$ are \emph{compatible} if $\{a_1,a_2\} \cap \{a_3,a_4\} = \emptyset$ and $\{b_1,b_2\} \cap \{b_3,b_4\} = \emptyset$. Similarly, the square $\{a_1,a_2\} \times \{b_1,b_2\}$ is compatible with the singleton $\{a_3\} \times \{b_3\}$ if $a_3 \neq a_1,a_2$ and $b_3 \neq b_1,b_2$.

We associate permutations $\va,\vb \in S_n$ with the set $\bigl\{ \{\va_{2t-1},\va_{2t}\} \times \{\vb_{2t-1}, \vb_{2t}\} : t \in [\lfloor n/2 \rfloor] \bigr\}$ of $\lfloor n/2 \rfloor$ compatible squares, accompanied by the compatible singleton $\{\va_n\} \times \{\vb_n\}$ when $n$ is odd. This set of $\lceil n/2 \rceil$ compatible squares and (zero or one) singletons completely specifies $S_{\va,\vb}$. We call such a set a \emph{square system} (although when $n$ is odd, it also contains a singleton). If we choose $\va,\vb$ at random, then the resulting square system is a \emph{(uniformly) random square system}.

A permutation $\pi$ \emph{hits} a square $\{a_1,a_2\} \times \{b_1,b_2\}$ if $\pi(\{a_1,a_2\}) = \{b_1,b_2\}$. If $\pi$ does not hit a square $\{a_1,a_2\} \times \{b_1,b_2\}$, then whenever $\pi \in S_{\va,\vb}$, the corresponding square system does not contain the square $\{a_1,a_2\} \times \{b_1,b_2\}$.

We denote by $S_\Sigma$ the set of permutations hitting all squares in the square system $\Sigma$. If $\Sigma$ is a random square system and we choose a random element of $S_\Sigma$, then the result is a random element of $S_n$.

\smallskip

We move on to the analog of \Cref{lem:sporadic-relative}. We are given a set of bad squares. In our applications this set will consist of squares $\{a_1,a_2\} \times \{b_1,b_2\}$ such that $c_{a_1,b_1} + c_{a_2,b_2} - c_{a_1,b_2} - c_{a_2,b_1}  \notin \{0,\pm1\}$ (in one application) or $c_{a_1,b_1} + c_{a_2,b_2} - c_{a_1,b_2} - c_{a_2,b_1} \neq 0$ (in another), where $f = \sum_{i,j} c_{i,j} x_{i,j}$.

We are given that a random square system typically contains a small number of bad squares (in our applications, zero or one). We want to say that this still holds even conditioned on the square system containing one or two given squares.

\begin{lemma} \label{lem:L0-relative}
For $n \geq 2$, let $R \subseteq \binom{[n]}{2} \times \binom{[n]}{2}$ be a collection of squares of density $\rho = |R|/\binom{n}{2}^2$, and suppose that for some $C \in \NN$,
\[
 \Pr_\Sigma[|\Sigma \cap R| > C] = \delta,
\]
where $\Sigma$ is a random square system.

For any square $\sigma$,
\[
 \Pr_\Sigma[|\Sigma \cap (R \setminus \{\sigma\})| > C \mid \sigma \in \Sigma] \leq \delta + O(\rho).
\]

For any two compatible squares $\sigma_1,\sigma_2$,
\[
 \Pr_\Sigma[|\Sigma \cap (R \setminus \{\sigma_1,\sigma_2\})| > C \mid \sigma_1,\sigma_2 \in \Sigma] \leq \delta + O(\rho).
\]
\end{lemma}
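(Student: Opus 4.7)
The plan for Part~1 is a coupling argument: I will couple a uniformly random square system $\Sigma$ with $\Sigma_\sigma$, distributed as $\Sigma$ conditional on $\sigma \in \Sigma$, so that $|\Sigma \triangle \Sigma_\sigma| = O(1)$ and each \emph{extra} square of $\Sigma_\sigma$ beyond $\sigma$ is distributed uniformly over a universe of size $\Theta(n^4)$. Then by the density bound $|R| \leq \rho \binom{n}{2}^2$, the probability that any such extra square lies in $R$ is $O(\rho)$, and a union bound against the event $\{|\Sigma \cap R| > C\}$ yields the result.

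Write $\sigma = \{a_1,a_2\} \times \{b_1,b_2\}$. A uniformly random square system is obtained from two uniform permutations $\va,\vb \in S_n$ by pairing consecutive positions, while $\Sigma_\sigma$ arises analogously from $\va^\sigma,\vb^\sigma$ uniform under the constraints $\va^\sigma(1)=a_1,\va^\sigma(2)=a_2$ and $\vb^\sigma(1)=b_1,\vb^\sigma(2)=b_2$. The coupling uses the bijection $\va \leftrightarrow (\va^\sigma,(p_1,p_2))$, where $p_1,p_2$ are the positions of $a_1,a_2$ in $\va$ and $\va^\sigma$ is obtained from $\va$ by swapping the values at positions $\{1,p_1\}$ and $\{2,p_2\}$; this map transports uniform measure on $S_n$ to the product of uniform measures on the two factors, rendering $\va^\sigma$ independent of $(p_1,p_2)$. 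Applying the same construction on the $\vb$ side (producing positions $q_1,q_2$), the permutations $\va$ and $\va^\sigma$ agree outside $\{1,2,p_1,p_2\}$, similarly for $\vb$, so $\Sigma$ and $\Sigma_\sigma$ can differ only at the pairing indices $1,\lceil p_1/2 \rceil,\lceil p_2/2 \rceil,\lceil q_1/2 \rceil,\lceil q_2/2 \rceil$---at most five indices, giving $|\Sigma \triangle \Sigma_\sigma| \leq 10$.

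Index~$1$ carries $\sigma$ in $\Sigma_\sigma$; at any other affected index $t > 1$, the square of $\Sigma_\sigma$ equals $\{\va^\sigma(2t-1),\va^\sigma(2t)\} \times \{\vb^\sigma(2t-1),\vb^\sigma(2t)\}$, which by the uniform-conditional marginals of $\va^\sigma$ and $\vb^\sigma$ and their mutual independence is distributed uniformly over $\binom{[n] \setminus \{a_1,a_2\}}{2} \times \binom{[n] \setminus \{b_1,b_2\}}{2}$, a set of size $\binom{n-2}{2}^2$. The probability this extra square lies in $R$ is at most $|R|/\binom{n-2}{2}^2 = O(\rho)$, so a union bound over the at most four extra indices shows that with probability $1 - O(\rho)$ no element of $\Sigma_\sigma \setminus (\Sigma \cup \{\sigma\})$ lies in $R$; in this good event $|\Sigma_\sigma \cap (R \setminus \{\sigma\})| \leq |\Sigma \cap R|$, so $\Pr[|\Sigma_\sigma \cap (R \setminus \{\sigma\})| > C] \leq \Pr[|\Sigma \cap R| > C] + O(\rho) = \delta + O(\rho)$, proving Part~1.

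For Part~2 I iterate: conditioning on $\sigma_1 \in \Sigma$ yields a new uniform square system $\Sigma'$ on the reduced universe $([n]\setminus\{a_1,a_2\}) \times ([n]\setminus\{b_1,b_2\})$, and Part~1 applied to $\sigma_1$ gives $\Pr[|\Sigma' \cap R'| > C] \leq \delta + O(\rho)$ for $R'$ the restriction of $R$ to the reduced universe. Since $|R'| \leq |R|$ while the new universe has size $\binom{n-2}{2}^2 = \Theta(n^4)$, the effective density is still $O(\rho)$, so a second application of Part~1 with the compatible square $\sigma_2 \in \Sigma'$ gives the target $\delta + O(\rho)$. The main technical obstacle is the careful bookkeeping of edge cases in the swap coupling---when some $p_i \in \{1,2\}$, or when the $\va$- and $\vb$-induced affected indices coincide---but such degeneracies only reduce $|\Sigma \triangle \Sigma_\sigma|$ and so only strengthen the bound.
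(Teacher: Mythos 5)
Your proposal is correct and takes essentially the same route as the paper: both are coupling arguments that pair a uniform square system $\Sigma$ with a square system $\Sigma_\sigma$ conditioned to contain $\sigma$, arrange for $|\Sigma_\sigma \setminus \Sigma| = O(1)$, show that each new non-$\sigma$ square is uniform over $\Theta(n^4)$ compatible squares, and finish with a union bound; Part~2 is handled by iterating on the residual copy of $S_{n-2}$ in both. The only cosmetic difference is that you realize the coupling by swapping entries of the underlying permutation pair $(\va,\vb)$, whereas the paper performs the analogous local surgery directly on the square system (including an explicit step for the singleton when $n$ is odd); your formulation implicitly covers the singleton since it is not a square and so never contributes to $\Sigma \cap R$, and the degenerate cases with $p_i \in \{1,2\}$ that you flag are indeed harmless, as you note.
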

\begin{proof}
Let $\Sigma'$ be a random square system. We wish to modify it so that it contains $\sigma = \{a_1,a_2\} \times \{b_1,b_2\}$. We do this in a number of steps.

First, suppose that $n$ is odd. If the singleton of $\Sigma'$ is $\{a_1\} \times \{B\}$ then we switch $a_1$ with a random element in $[n] \setminus \{a_1,a_2\}$. We handle singletons of the forms $\{a_2\} \times \{B\}, \{A\} \times \{b_1\}, \{A\} \times \{b_2\}$ in an analogous way (some singletons could require two fixing steps).

At this point, regardless of the parity of $n$, we know that $a_1,a_2,b_1,b_2$ all participate in squares. If $a_1,a_2$ are not part of the same square, then they are part of two different squares $\{a_1,A_2\} \times \{B_1,B_2\}$ and $\{A_1,a_2\} \times \{C_1,C_2\}$. We replace these squares with the squares $\{a_1,a_2\} \times \{B_1,B_2\}$ and $\{A_1,A_2\} \times \{C_1,C_2\}$. We act similarly if $b_1,b_2$ are not part of the same square.

Finally, if $\{a_1,a_2\}$ and $\{b_1,b_2\}$ are not part of the same square, then they are part of two different squares $\{a_1,a_2\} \times \{B_1,B_2\}$ and $\{A_1,A_2\} \times \{b_1,b_2\}$. We replace these squares with the squares $\{a_1,a_2\} \times \{b_1,b_2\}$ and $\{A_1,A_2\} \times \{B_1,B_2\}$.

Denote the resulting random square system by $\Sigma$. This is a uniformly random square system containing the square $\sigma$. By construction, $|\Sigma \setminus \Sigma'| = O(1)$. By symmetry, if we choose $\Sigma'$ uniformly at random and a random square in $\Sigma \setminus \Sigma'$, we either get $\sigma$ or a uniformly random square compatible with $\sigma$. The number of squares compatible with $\sigma$ is $\binom{n-2}{2}^2$, which is $\Omega(\binom{n}{2}^2)$ assuming $n \geq 4$ (otherwise the lemma trivially holds). Therefore a union bound shows that the probability that $\Sigma \setminus \Sigma'$ contains any square in $R \setminus \{\sigma\}$ is $O(\rho)$. By assumption, $|\Sigma' \cap (R \setminus \{\sigma\})| > C$ with probability at most $\delta$, and so overall, the probability that $|\Sigma \cap (R \setminus \{\sigma\})| > C$ is at most $\delta + O(\rho)$.
This proves the first part of the lemma.

\smallskip

Now suppose that instead of a single square $\sigma$ we are given two compatible squares $\sigma_1,\sigma_2$. We can assume that $n \geq 4$, since otherwise the lemma trivially holds. Applying the first part of the lemma to $\sigma_1$, we obtain
\[
 \Pr_\Sigma[|\Sigma \cap (R \setminus \{\sigma_1\})| > C \mid \sigma_1 \in \Sigma] \leq \delta + O(\rho).
\]
If we remove $\sigma_1$ from $\Sigma$, we obtain a random square system in a copy of $S_{n-2}$. The density of $R \setminus \{\sigma_1\}$ inside this copy is $\rho' = |R \setminus \{\sigma_1\}|/\binom{n-2}{2}^2 = O(\rho)$. Applying the first part of the lemma to $\sigma_2$, we obtain
\[
 \Pr_\Sigma[|\Sigma \cap (R \setminus \{\sigma_1,\sigma_2\})| > C \mid \sigma_1,\sigma_2 \in \Sigma] \leq \delta + O(\rho). \qedhere
\]
\end{proof}

\subsection{Sparse representation} \label{sec:L0-sparse}

Our goal in this subsection is to prove the following analog of \Cref{lem:L2-sparse}.

\begin{lemma} \label{lem:L0-sparse}
 Let $f\colon S_n \to \RR$ be a linear function satisfying $\Pr[f \notin \{0,1\}] = \epsilon$. Then there is an integer $e$ and integers $e_{i,j} \in \{0,\pm1\}$ such that the function
 \[
  g = e + \sum_{i=1}^n \sum_{j=1}^n e_{i,j} x_{i,j}
 \]
 satisfies $\Pr[f \neq g] = O(\epsilon)$, and so $\Pr[g \notin \{0,1\}] = O(\epsilon)$. Moreover, only $O(n)$ many of the $e_{i,j}$ are non-zero.
\end{lemma}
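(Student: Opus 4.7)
The plan is to mirror the proof of \Cref{lem:L2-sparse}, but with two substitutions: use the \emph{exact} characterization provided by \Cref{lem:L0-FKN} in place of the approximate $L_2$-FKN, and use the square-system reduction of \Cref{sec:L0-reduction} in place of the direct cube reduction. Fix a small $\epsilon_0>0$; when $\epsilon\geq\epsilon_0$, take $g\in\{0,1\}$ to be whichever of the two gives $\Pr[f\neq g]\leq 1/2 = O(\epsilon)$, so I may assume $\epsilon<\epsilon_0$. The starting observation is that $\E_\Sigma\Pr_{\pi\in S_\Sigma}[f(\pi)\notin\{0,1\}]=\epsilon$, so by Markov at most a $4\epsilon$ fraction of square systems $\Sigma$ violate the hypothesis of \Cref{lem:L0-FKN}. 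For the remaining ``FKN-good'' $\Sigma$, the restriction $f_{\va,\vb}\in\{0,1,x_t,1-x_t\}$, so every square coefficient $\gamma_t$ lies in $\{0,\pm1\}$ and at most one is non-zero.

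Let $R_{\mathrm{bad}}$ and $R_{\mathrm{heavy}}$ denote the squares with $\gamma\notin\{0,\pm1\}$ and $\gamma\neq 0$, respectively. Markov applied to $|\Sigma\cap R_{\mathrm{bad}}|$ immediately bounds the density of $R_{\mathrm{bad}}$ by $O(\epsilon)$. For $R_{\mathrm{heavy}}$, the key point is that the events ``$\sigma\in\Sigma$ and $\sigma$ is the unique heavy square of $\Sigma$'' are disjoint across heavy $\sigma$, so their probabilities sum to at most $1$; combining this with \Cref{lem:L0-relative}, which shows that conditional on $\sigma\in\Sigma$ the chance of another heavy square in $\Sigma$ is $O(\epsilon+\rho_{\mathrm{heavy}})$, a self-consistent bootstrap yields $|R_{\mathrm{heavy}}|=O(n^3)$. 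Averaging over pivots then lets me pick $(i_1,j_1)$ so that only $O(\epsilon n^2)$ pivot-squares $\{i_1,i\}\times\{j_1,j\}$ lie in $R_{\mathrm{bad}}$ and only $O(n)$ lie in $R_{\mathrm{heavy}}$. The definition of $g$ is then dictated by the pivot: set $e_{i,j}=\gamma_{i_1,i,j_1,j}$ when this value is in $\{0,\pm1\}$ and $e_{i,j}=0$ otherwise, with $e_{i_1,\cdot}=e_{\cdot,j_1}=0$, and pick $e$ as the nearest integer to the corresponding pivot constant. The heavy bound then yields $O(n)$ non-zero $e_{i,j}$.

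The hard part will be proving $\Pr[f\neq g]=O(\epsilon)$; a naive hitting bound on the $O(\epsilon n^2)$ bad pivot-squares only yields $O(\epsilon n)$. I would decompose $\Pr[f\neq g]=\E_\Sigma\Pr_{\pi\in S_\Sigma}[f(\pi)\neq g(\pi)]$ and exploit the identity
\[
 \gamma_{i_2,i_3,j_2,j_3}=\gamma_{i_1,i_2,j_1,j_2}+\gamma_{i_1,i_3,j_1,j_3}-\gamma_{i_1,i_2,j_1,j_3}-\gamma_{i_1,i_3,j_1,j_2},
\]
which implies that whenever every pivot-square of $\sigma$ lies in $\{0,\pm1\}$, the induced coefficient $\gamma^{(g)}(\sigma)$ equals $\gamma^{(f)}(\sigma)$. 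Consequently, for FKN-good $\Sigma$ in which every square is ``safe'' in this sense, $f|_\Sigma$ and $g|_\Sigma$ agree as linear functions on the cube up to a global constant, which the integrality of $f$ on typical permutations together with the choice of $e$ forces to be zero. The remaining contribution, $\Pr[\Sigma \text{ is FKN-good and contains an unsafe square}]$, requires another invocation of \Cref{lem:L0-relative}: conditional on a bad pivot-square being in $\Sigma$, the probability that $\Sigma$ is FKN-good and still contains some other unsafe structure is $O(\epsilon)$, which amplifies the $O(\epsilon n)$ density estimate into the desired $O(\epsilon)$ probability bound. This conditional analysis, intricate but inescapable, is where I expect the main effort of the proof to lie.
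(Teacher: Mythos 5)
Your high-level plan is sound: reduce to the cube via square systems, apply the exact $L_0$-FKN characterization (\Cref{lem:L0-FKN}), fix a pivot, and define $g$ by rounding the induced coefficients. But there is a concrete gap, and the ``intricate but inescapable'' conditional analysis you sketch at the end does not fix it; the correct resolution is much simpler than what you propose.

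The problem is your density estimate for $R_{\mathrm{bad}}$. You stop at $\rho_{\mathrm{bad}}=O(\epsilon)$, obtained directly from Markov, which is what produces the $O(\epsilon n^2)$ count of bad pivot-cells and hence the useless $O(\epsilon n)$ union bound. The paper's proof improves $\rho_{\mathrm{bad}}$ to $O(\epsilon/n)$ by applying exactly the disjoint-events-plus-\Cref{lem:L0-relative} bootstrap that you reserve for $R_{\mathrm{heavy}}$, but now with $C=0$: for each $\sigma\in R_{\mathrm{bad}}$ one has
\[
\Pr_\Sigma[\Sigma\cap R_{\mathrm{bad}}=\{\sigma\}]\geq \Pr[\sigma\in\Sigma]\cdot\bigl(1-\delta-O(\rho_{\mathrm{bad}})\bigr)=\Omega(1/n^3),
\]
and summing these disjoint events against the Markov bound $\Pr[\Sigma\cap R_{\mathrm{bad}}\neq\emptyset]\leq 4\epsilon$ forces $|R_{\mathrm{bad}}|=O(\epsilon n^3)$. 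With this, a good pivot sees only $O(\epsilon n)$ bad cells, and the naive union bound already yields $\Pr[f\neq g]=O(\epsilon)$; no cocycle identity or safe-square decomposition is needed. Your proposed second pass over square systems cannot by itself recover the missing factor of $n$, since the probability of encountering an ``unsafe'' square is still controlled by the density of $R_{\mathrm{bad}}$, which you never improved.

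Two secondary issues. First, your ``unique heavy square'' argument for $R_{\mathrm{heavy}}$ needs repair: \Cref{lem:L0-FKN} only yields $\Pr[|\Sigma\cap R_{\mathrm{heavy}}|>1]\leq4\epsilon$, not $\Pr[|\Sigma\cap R_{\mathrm{heavy}}|>0]\leq4\epsilon$, so the events ``$\sigma\in\Sigma$ and $\Sigma\cap R_{\mathrm{heavy}}=\{\sigma\}$'' cannot be lower-bounded via \Cref{lem:L0-relative} with $C=0$; the paper instead counts ordered pairs of compatible heavy squares with $C=1$. Second, do not define $e$ by rounding a real pivot constant: in the $L_0$ setting a non-integer $e$ forces $f\neq g$ everywhere. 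Instead, once $\Pr[f\neq g]=O(\epsilon)<1-\epsilon$, there exists $\pi$ with $f(\pi)=g(\pi)\in\{0,1\}$, and since all $e_{i,j}$ are integers this shows $e$ is already an integer with no rounding required.
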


In the proof, we can assume that $\epsilon < \epsilon_0$ for an appropriate constant $\epsilon_0 > 0$. Indeed, if $\epsilon \geq \epsilon_0$ then we can satisfy the lemma by choosing $g = 0$.

Similarly, we can assume that $n > N$ for an appropriate constant $N \in \NN$. Indeed, if $n \leq N$ then either $\epsilon \geq 1/N!$, in which case we are done by choosing $\epsilon_0$ small enough, or $\epsilon < 1/N!$, in which case $f$ is Boolean. In the latter case, we can take $g = f$ and apply \Cref{thm:EFP} to represent $f$ in the required form.

\smallskip

Since $f$ is a linear function, we can write
\[
 f = \sum_{i,j} c_{i,j} x_{i,j}.
\]
For $i_1 ,i_2,j_1,j_2 \in [n]$, define $d_{i_1,i_2,j_1,j_2} = c_{i_1,j_1} + c_{i_2,j_2} - c_{i_1,j_2} - c_{i_2,j_1}$. If we switch $i_1,i_2$ or $j_1,j_2$, then this expression changes sign. This ensures that the following sets are well-defined: $R_0$ is the set of squares $\{i_1,i_2\} \times \{j_1,j_2\}$ such that $d_{i_1,i_2,j_1,j_2} \neq 0$, and $R_1$ is the set of squares $\{i_1,i_2\} \times \{j_1,j_2\}$ such that $d_{i_1,i_2,j_1,j_2} \notin \{0,\pm1\}$. Note that if $i_1 = i_2$ or $j_1 = j_2$ then $d_{i_1,i_2,j_1,j_2} = 0$.

Let $\Sigma$ be a random square system, let $f_\Sigma = f|_{S_\Sigma}$, and let $\epsilon_\Sigma = \Pr[f_\Sigma \notin \{0,1\}]$. Thus $\E[\epsilon_\Sigma] = \epsilon$, and so $\Pr[\epsilon_\Sigma \geq 1/4] \leq 4\epsilon$.

Whenever $\epsilon_\Sigma < 1/4$, \Cref{lem:L0-FKN} shows that $\Sigma \cap R_1 = \emptyset$, and so $\Pr[|\Sigma \cap R_1| > 0] \leq 4\epsilon$. Moreover, if we choose $\Sigma$ at random, then a random square of $\Sigma$ belongs to $R_1$ with probability at most $4\epsilon$. Since a random square of $\Sigma$ is just a random square, this shows that $\rho_1 = |R_1|/\binom{n}{2}^2 \leq 4\epsilon$.
Applying \Cref{lem:L0-relative} with $C = 0$ for any $\sigma \in R_1$, we deduce that
\[
 \Pr_\Sigma[|\Sigma \cap R_1| = \{\sigma\}] =
 \frac{\lfloor n/2 \rfloor}{\binom{n}{2}^2} \left(1 - \Pr_\Sigma[|\Sigma \cap (R_1 \setminus \{\sigma\})| > 0 \mid \sigma \in \Sigma]\right) =
 \Omega\left(\frac{1}{n^3}\right) (1 - O(\epsilon)) = \Omega\left(\frac{1}{n^3}\right),
\]
assuming $\epsilon_0$ is small enough.
These events are disjoint for different $\sigma \in R_1$, and so
\[
 \Pr_\Sigma[\Sigma \cap R_1 \neq \emptyset] = \Omega\left(\frac{|R_1|}{n^3}\right).
\]
Since this probability is at most $4\epsilon$, we conclude that $|R_1| = O(\epsilon n^3)$, and so a random square belongs to $R_1$ with probability $O(\epsilon/n)$.

\smallskip

\Cref{lem:L0-FKN} shows that if $\epsilon_\Sigma < 1/4$ then not only $\Sigma \cap R_1 = \emptyset$, but moreover $|\Sigma \cap R_0| \leq 1$. Therefore if we choose a random square system $\Sigma$ and a random square in $\Sigma$, then it belongs to $R_0$ with probability at most $\rho_0 = 4\epsilon + 1/\lfloor n/2 \rfloor = O(\epsilon + 1/n)$.
Applying \Cref{lem:L0-relative} with $C = 1$ for any compatible $\sigma_1,\sigma_2 \in R_0$, we deduce that
\begin{multline*}
 \Pr_\Sigma[|\Sigma \cap (R_0 \setminus \{\sigma_1,\sigma_2\})| \leq 1] =
 \frac{\lfloor n/2 \rfloor (\lfloor n/2 \rfloor - 1)}{\binom{n}{2}^2 \binom{n-2}{2}^2} \left(1 - \Pr_\Sigma[|\Sigma \cap (R_0 \setminus \{\sigma_1,\sigma_2\})| > 1 \mid \sigma_1,\sigma_2 \in \Sigma] \right) = \\
 \Omega\left(\frac{1}{n^6}\right) \bigl(1 - O(\epsilon + 1/n)\bigr) = \Omega\left(\frac{1}{n^6}\right),
\end{multline*}
assuming $\epsilon_0$ is small enough and $N$ is large enough. If any of these events happens then $2 \leq |\Sigma \cap R_0| \leq 3$, and so each $\Sigma$ can belong to at most three of these events. Therefore, if we denote by $P$ the number of ordered pairs of compatible squares in $R_0$ then
\[
 \Pr[|\Sigma \cap R_0| \geq 2] = \Omega\left(\frac{P}{n^6}\right).
\]
Since this probability is at most $O(\epsilon)$, we conclude that $P = O(\epsilon n^6)$.
On the other hand, since a given square is incompatible with at most $4n^3$ other squares,
\[
 P \geq |R_0| (|R_0| - 4n^3).
\]
Hence either $|R_0| \leq 8n^3$ or $P \geq |R_0|^2/2$, which implies that $|R_0| = O(\sqrt{\epsilon} n^3) = O(n^3)$. In both cases, $|R_0| = O(n^3)$, and so a random square belongs to $R_0$ with probability $O(1/n)$.

\smallskip

Summarizing our work so far, we have shown that a random square belongs to $R_1$ with probability $O(\epsilon/n)$ and to $R_0$ with probability $O(1/n)$. We can sample a random square $\{i_1,i_2\} \times \{j_1,j_2\}$ by first sampling $i_1,j_1 \in [n]$ and then sampling $i_2,j_2 \in [n]$ subject to $i_2 \neq i_1$ and $j_2 \neq j_1$. Therefore there exist $i_1,j_1 \in [n]$ such that the $d_{i,j} := d_{i_1,i,j_1,j}$ satisfy
\begin{align*}
 &\Pr_{\substack{i \neq i_1 \\ j \neq j_1}}[d_{i,j} \notin \{0,\pm1\}] = O(\epsilon/n), &
 &\Pr_{\substack{i \neq i_1 \\ j \neq j_1}}[d_{i,j} \neq 0] = O(1/n).
\end{align*}
When $i = i_1$ or $j = j_1$, we have $d_{i,j} = 0$, and so the bounds above hold even if we sample $i,j \in [n]$ in an unrestricted fashion.

As in the proof of \Cref{lem:L2-sparse}, we have
\[
 f = \sum_{i=1}^n \sum_{j=1}^n (d_{i,j} - c_{i_1,j_1} + c_{i_1,j} + c_{i,j_1}) x_{i,j} =
 -nc_{i_1,j_1} + \sum_{j=1}^n c_{i_1,j} + \sum_{i=1}^n c_{i,j_1} + \sum_{i=1}^n \sum_{j=1}^n d_{i,j} x_{i,j},
\]
and so for an appropriate $e$,
\[
 f = e + \sum_{i=1}^n \sum_{j=1}^n d_{i,j} x_{i,j}.
\]

We let $e_{i,j} = d_{i,j}$ if $d_{i,j} \in \{0,\pm 1\}$, and $e_{i,j} = 0$ otherwise. This defines a function
\[
 g = e + \sum_{i=1}^n \sum_{j=1}^n e_{i,j} x_{i,j}
\]
which differs from $f$ only when the input permutation $\pi$ satisfies $d_{i,\pi(i)} \notin \{0,\pm 1\}$ for some $i \in [n]$. Applying the union bound, we obtain
\[
 \Pr[f \neq g] \leq \frac{\bigl|\bigl\{ (i,j) \in [n]^2 : d_{i,j} \notin \{0,\pm 1\} \bigr\}\bigr|}{n} = O(\epsilon).
\]
This implies that $\Pr[g \notin \{0,1\}] \leq \Pr[f \notin \{0,1\}] + \Pr[f \neq g] = O(\epsilon)$, which is strictly smaller than~$1$ for small enough $\epsilon_0$. Therefore $g(\pi) \in \{0,1\}$ for some $\pi \in S_n$. Since $e_{i,j} \in \{0,\pm 1\}$, this implies that $e$ is an integer. Finally, by construction the number of non-zero $e_{i,j}$ is at most the number of non-zero $d_{i,j}$, which is $O(n)$.

\subsection{Main theorems} \label{sec:L0-main}

We can now prove \Cref{thm:main-3,thm:main-3a}. The proofs are similar to the ones in \Cref{sec:L2-main}.

Recall that $f$ is $\epsilon$-close to Boolean in $L_0$ if $\Pr[f \notin \{0,1\}] \leq \epsilon$.

\mainthree*

\begin{proof}
 The converse follows directly from \Cref{lem:sum-of-cosets}, so it suffices to prove the first part of the theorem.

 The arguments below require that $n \geq N$ and $\epsilon \leq \epsilon_0$, for some constants $N \in \NN$ and $\epsilon_0 > 0$ that originate from the various lemmas. If $\epsilon > \epsilon_0$	then we can take $g = 0$, since $\Pr[f \neq g] \leq 1 = O(\epsilon)$. If $n < N$ then either $\epsilon \geq 1/N!$, in which case we are done as before, or $\epsilon < 1/N!$, in which case $f$ is Boolean, and so \Cref{thm:EFP} completes the proof. From now on, we assume that $\epsilon \leq \epsilon_0$ and $n \geq N$.
 
 \Cref{lem:L0-sparse} constructs a function $g$ of a certain form satisfying $\Pr[f \neq g] = O(\epsilon)$. \Cref{lem:sporadic} shows that $g$ can be written in an equivalent form, which allows application of \Cref{lem:sum-of-cosets}. According to the latter, there is an $\epsilon$-disjoint family of cosets $\cC$ and a choice $G \in \{g,1-g\}$ such that $h = \sum_{(i,j) \in \cC} x_{i,j}$ satisfies $\Pr[G \neq h] = O(\epsilon)$. If $G = g$ then $\Pr[f \neq h] \leq \Pr[f \neq g] + \Pr[g \neq h] = O(\epsilon)$, completing the proof, so suppose that $G = 1-g$.
 
 When $G = 1-g$, we have $\Pr[1-f \neq h] \leq \Pr[1-f \neq 1-g] + \Pr[1-g \neq h] = O(\epsilon)$, and so $\Pr[h \neq 0] \geq \Pr[f \neq 1] - O(\epsilon) \geq 1/2 - O(\epsilon)$. For an appropriate choice of $\epsilon_0$, this implies that $\Pr[h \neq 0] \geq 1/3$. Applying \Cref{lem:constant-or-dictator} for an appropriate constant $\delta$, we obtain that $\Pr[h \neq H] = O(\epsilon)$ for some dictator $H$. Thus $\Pr[f \neq 1-H] \leq \Pr[f \neq 1-h] + \Pr[h \neq H] = O(\epsilon)$. Since $1 - H$ is also a dictator, this completes the proof. 
\end{proof}

\mainthreea*

\begin{proof}
We can assume that $\Pr[f = 1] \leq 1/2$	, since otherwise we can repeat the argument with $1-f$, using the fact that if $H$ is a dictator then so is $1-H$. Moreover, clearly $\delta \leq 1/2$.

As in the proof of \Cref{thm:main-3}, we can assume that $n \geq N$ and $\epsilon \leq \epsilon_0$. The proof constructs a function $g$ satisfying $\Pr[f \neq g] = O(\epsilon)$, and shows that there is an $\epsilon$-disjoint family of cosets $\cC$ and a choice $G \in \{g,1-g\}$ such that $h = \sum_{(i,j) \in \cC} x_{i,j}$ satisfies $\Pr[G \neq h] = O(\epsilon)$.

Suppose first that $G = g$. Then $\Pr[f \neq h] \leq \Pr[f \neq g] + \Pr[g \neq h] = O(\epsilon)$, and so $\Pr[h \neq 0] \geq \Pr[f = 1] - O(\epsilon) \geq \delta - O(\epsilon) \geq \delta/2$, for an appropriate choice of $K$. Applying \Cref{lem:constant-or-dictator} with $\delta \gets K'\delta$ for an appropriate choice of $K'$, we obtain that $\Pr[h \neq H] = O(\epsilon/\delta)$ for some dictator $H$, and so $\Pr[f \neq H] \leq \Pr[f \neq h] + \Pr[h \neq H] = O(\epsilon/\delta + \epsilon) = O(\epsilon/\delta)$, since $\delta \leq 1/2$.

If $G = 1 - g$ then $\Pr[1-f \neq h] = O(\epsilon)$, and so $\Pr[h \neq 0] \geq \Pr[f \neq 1] - O(\epsilon) \geq 1/2 - \epsilon \geq 1/3$, for an appropriate choice of $\epsilon_0$. Applying the argument of the case $g = G$ with $\delta \gets 2/3$, we obtain that $\Pr[1-f \neq H] = O(\epsilon) = O(\epsilon/\delta)$ for some dictator $H$. This completes the proof, since $\Pr[f \neq 1-H] = O(\epsilon)$, and $1-H$ is also a dictator.
\end{proof}

\section{Approximation in \texorpdfstring{$L_\infty$}{Linf}} \label{sec:Linf}

In this section we prove \Cref{thm:main-4}. Although not necessary for proving \Cref{thm:main-4}, we start this section with an $L_\infty$ version of the FKN theorem on the Boolean cube, which is stated and proved in \Cref{sec:Linf-FKN}. We then prove an analog of \Cref{lem:L2-sparse,lem:L0-sparse} in \Cref{sec:Linf-sparse}, and an analog of \Cref{lem:sporadic} in \Cref{sec:Linf-sporadic}. Finally, we prove \Cref{thm:main-4} in \Cref{sec:Linf-main}.

\subsection{FKN theorem on the cube} \label{sec:Linf-FKN}

For reference, here is an $L_\infty$ analog of the Friedgut--Kalai--Naor theorem on the Boolean cube.

\begin{lemma} \label{lem:Linf-FKN}
Let $f\colon \{0,1\}^n \to \RR$ be given	 by
\[
 f(x_1,\ldots,x_n) = c + \sum_{i=1}^n c_i x_i.
\]
If $\dist(f(x),\{0,1\}) \leq \epsilon$ for all $x \in \{0,1\}^n$ and $\epsilon < 1/4$ then the function $g(x) = \round(f(x), \{0,1\})$ satisfies $g \in \{0,1,x_1,\ldots,x_n,1-x_1,\ldots,1-x_n\}$.
\end{lemma}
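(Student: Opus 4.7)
The plan is to show that $g = \round(f,\{0,1\})$ is itself an affine function of $x_1,\ldots,x_n$, and then to classify Boolean affine functions on the cube.

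First, note that $\epsilon < 1/4 < 1/2$ ensures that $g(x) = \round(f(x),\{0,1\})$ is uniquely defined and satisfies $|f(x)-g(x)| \leq \epsilon$ pointwise. I would then apply a discrete Laplacian trick. For any two coordinates $i \neq j$ and any fixing of the remaining coordinates, let $f_{ab}$ denote the value of $f$ when $x_i = a$ and $x_j = b$, and similarly define $g_{ab} \in \{0,1\}$. Since $f$ is affine we have the exact identity
\[
 f_{11} - f_{10} - f_{01} + f_{00} = 0,
\]
while $|g_{ab}-f_{ab}| \leq \epsilon$ yields
\[
 |g_{11} - g_{10} - g_{01} + g_{00}| \leq 4\epsilon < 1.
\]
Because the left-hand side is an integer, it must equal $0$. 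This means the discrete mixed second difference of $g$ with respect to every pair $(x_i,x_j)$ vanishes identically, so in the multilinear expansion of $g$ no monomial contains two distinct variables. Hence $g(x) = a_0 + \sum_i a_i x_i$ for some reals $a_0,a_i$.

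Finally, I would classify Boolean affine functions on the cube. Evaluating at the all-zeros input forces $a_0 \in \{0,1\}$. Evaluating at each $e_i$ forces $a_i \in \{-a_0, 1-a_0\} \subseteq \{0,\pm 1\}$. Evaluating at $e_i + e_j$ for distinct $i,j$ with $a_i,a_j$ both non-zero produces a value outside $\{0,1\}$ (namely $a_0 + 2(1-a_0)$ or $a_0 - 2a_0$), so at most one $a_i$ is non-zero, and its sign is determined by $a_0$. The four resulting functions are exactly $0$, $1$, $x_i$, and $1 - x_i$.

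The main obstacle is the middle step: recognizing that $\epsilon < 1/4$ is precisely what is needed to force the integer quantity $g_{11}-g_{10}-g_{01}+g_{00}$ to vanish exactly, which is what upgrades a merely approximate statement about $f$ into a rigid structural statement about $g$. Once $g$ has been shown to be affine, everything else is routine bookkeeping.
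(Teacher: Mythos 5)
Your proof is correct, and it takes a genuinely different route from the paper's. The paper first normalizes so that $|c| \leq \epsilon$ (considering $1-f$ otherwise), then works directly with the coefficients of $f$: it shows each $c_i$ is $2\epsilon$-close to $\{0,1\}$, that at most one $c_i$ can round to $1$ (since $c_i + c_j$ must also be close to $\{0,1\}$), and that flipping any coordinate $j$ with $|c_j| \leq 2\epsilon$ moves $f$ by less than $1 - 2\epsilon$, hence cannot change $g$ — so $g$ depends on at most one coordinate. Your argument instead works directly with $g$: you show via the discrete mixed second difference that $g$ itself is multilinear of degree at most~$1$, which turns the approximate rigidity of $f$ into exact rigidity of $g$ in a single step, and then you classify Boolean affine functions on the cube. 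Your route is arguably cleaner conceptually, since it cleanly separates the "rounding kills degree-$\geq 2$ terms" insight from the elementary classification; the paper's route is more hands-on but avoids invoking the multilinear representation. Both crucially use $\epsilon < 1/4$ at exactly the analogous spot ($4\epsilon < 1$ in your version, $1-2\epsilon > 2\epsilon$ in the paper's).
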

\begin{proof}
Since $f(0,\ldots,0) = c$, we see that $\dist(c,\{0,1\}) \leq \epsilon$. Suppose that $|c| \leq \epsilon$; otherwise we can consider $1 - f$. Let $e^{(i)} \in \{0,1\}^n$ have a unique $1$-coordinate at position~$i$. Since $f(e^{(i)}) - f(0,\ldots,0) = c_i$, we see that $\dist(c_i,\{0,1\}) \leq 2\epsilon$. Since $f(e^{(i)} + e^{(j)}) - f(0,\ldots,0) = c_i + c_j$, also $\dist(c_i + c_j,\{0,1\}) \leq 2\epsilon$.

Let $d_i = \round(c_i,\{0,1\})$. If $d_i = d_j = 1$ then  $c_i + c_j \geq 2 - 2\epsilon > 1 + 2\epsilon$, which is impossible. Therefore at most one $d_i$ is non-zero. If $d_j = 0$ then $|c_j| \leq 2\epsilon$, and so if $x,y$ differ only in the $j$'th coordinate then $|f(x) - f(y)| = |c_j| \leq 2\epsilon$, and so $g(x) = g(y)$, since otherwise $|f(x) - f(y) \geq 1-2\epsilon > 2\epsilon$. Consequently, $g$ depends on at most one coordinate.
\end{proof}

\subsection{Sparse representation} \label{sec:Linf-sparse}

Our goal in this subsection is to prove the following analog of \Cref{lem:L2-sparse,lem:L0-sparse}. In the statement of the lemma, $x$ is \emph{$\epsilon$-close} to $y$ if $|x-y| \leq \epsilon$.

\begin{lemma} \label{lem:Linf-sparse}
Let $f\colon S_n \to \RR$ be a linear function satisfying $\dist(f(\pi), \{0,1\}) \leq \epsilon$ for all $\pi \in S_n$, where $\epsilon < 1/6$. We can write
\[
 f = d + \sum_{i=1}^n \sum_{j=1}^n d_{i,j} x_{i,j}
\]
in such a way that each $d_{i,j}$ is $2\epsilon$-close to $\{0,\pm 1\}$, and furthermore at most $O(n)$ of the $d_{i,j}$ are $2\epsilon$-close to~$\{\pm1\}$.
\end{lemma}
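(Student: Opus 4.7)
The plan is to follow the template of \Cref{lem:L2-sparse,lem:L0-sparse}. Writing $f = \sum_{i,j} c_{i,j} x_{i,j}$, for each $\va,\vb \in S_n$ I form the restriction $f_{\va,\vb}$ as in \Cref{sec:L2-reduction}, whose cube coefficients are
\[
 \Delta_t = c_{a_{2t-1},b_{2t-1}} + c_{a_{2t},b_{2t}} - c_{a_{2t-1},b_{2t}} - c_{a_{2t},b_{2t-1}}.
\]
Since $f_{\va,\vb}$ inherits the pointwise $\epsilon$-closeness to $\{0,1\}$ and $\epsilon < 1/6 < 1/4$, \Cref{lem:Linf-FKN} applies: each $\Delta_t$ is $2\epsilon$-close to $\{0,\pm 1\}$, and at most one of them is $2\epsilon$-close to $\{\pm 1\}$. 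Every square $\{i_1,i_2\}\times\{j_1,j_2\}$ embeds into some square system, so attaching to it the quantity
\[
 \Delta_{\{i_1,i_2\}\times\{j_1,j_2\}} := c_{i_1,j_1} + c_{i_2,j_2} - c_{i_1,j_2} - c_{i_2,j_1},
\]
this quantity is always $2\epsilon$-close to $\{0,\pm 1\}$; I call the square \emph{heavy} when it is $2\epsilon$-close to $\{\pm 1\}$.

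The key counting step is that every square system contains at most one heavy square. Because a uniformly random square in a uniformly random square system is itself a uniformly random square, the fraction of heavy squares is at most $1/\lfloor n/2 \rfloor$, so at most $O(n^3)$ squares are heavy in total. Each heavy square $\{a_1,a_2\}\times\{b_1,b_2\}$ contributes to the four pairs $(i_1,j_1) \in \{a_1,a_2\}\times\{b_1,b_2\}$, so averaging over $(i_1,j_1) \in [n]^2$ yields some $(i_1,j_1)$ for which the number of heavy squares of the form $\{i_1,i\}\times\{j_1,j\}$ is $O(n)$.

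For this $(i_1,j_1)$, I set $d_{i,j} = c_{i,j} + c_{i_1,j_1} - c_{i,j_1} - c_{i_1,j}$. This vanishes when $i = i_1$ or $j = j_1$, and otherwise equals $\Delta_{\{i_1,i\}\times\{j_1,j\}}$, hence is $2\epsilon$-close to $\{0,\pm 1\}$ and is $2\epsilon$-close to $\{\pm 1\}$ exactly when the corresponding square is heavy. Using $\sum_{i,j} x_{i,j} = n$ and $\sum_j x_{i,j} = \sum_i x_{i,j} = 1$, expanding $\sum_{i,j} d_{i,j} x_{i,j}$ and collecting yields $f = d + \sum_{i,j} d_{i,j} x_{i,j}$ for the scalar $d = -n c_{i_1,j_1} + \sum_i c_{i,j_1} + \sum_j c_{i_1,j}$. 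Because $2\epsilon < 1$, the vanishing entries are not $2\epsilon$-close to $\{\pm 1\}$, and by the choice of $(i_1,j_1)$ only $O(n)$ of the $d_{i,j}$ are.

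The main subtlety — and what distinguishes the proof from those of \Cref{lem:L2-sparse,lem:L0-sparse} — is that the total count of heavy squares does not shrink with $\epsilon$: they correspond to $\Delta$'s near $\pm 1$ rather than far from $\{0,\pm 1\}$, and so can persist even at $\epsilon = 0$ (consider a dictator $f = \sum_{j \in S} x_{1,j}$ with $|S| = n/2$, where there are already $\Theta(n^3)$ heavy squares). This forces the argument to rely on the absolute ``at most one heavy square per system'' guarantee coming from \Cref{lem:Linf-FKN}, rather than a quantitative expectation/Markov bound in $\epsilon$ of the kind used in the $L_2$ and $L_0$ proofs.
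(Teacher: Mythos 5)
Your proof is correct and essentially mirrors the paper's: both rest on the observation that each square system contains at most one heavy square, count $O(n^3)$ heavy squares in total, and average over $(i_1,j_1)$ to find a corner contained in only $O(n)$ of them. The only cosmetic difference is that you invoke \Cref{lem:Linf-FKN} on the restrictions $f_{\va,\vb}$ to obtain the per-system bound, whereas the paper derives the same facts directly by exhibiting pairs of permutations in $S_{\va,\vb}$ and taking differences (and indeed remarks that \Cref{lem:Linf-FKN} is not strictly needed for \Cref{thm:main-4}).
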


If $n = 1$ then the lemma trivially holds, so we can assume that $n \geq 2$.

For $i_1,i_2,j_1,j_2 \in [n]$, define $d_{i_1,i_2,j_1,j_2} = c_{i_1,j_1} + c_{i_2,j_2} - c_{i_1,j_2} - c_{i_2,j_1}$.
Let $\va,\vb \in S_n$ be any two permutations, and consider $S_{\va,\vb}$. For every $t \in [\lfloor n/2 \rfloor]$, we can find two permutations $\alpha,\beta \in S_{\va,\vb}$ such that $f(\alpha) - f(\beta) = d_{\va_{2t-1},\va_{2t},\vb_{2t-1},\vb_{2t}}$. Since $\va,\vb$ are arbitrary, this shows that $\dist(d_{i_1,i_2,j_1,j_2}, \{0,\pm1\}) \leq 2\epsilon$ for all $i_1,i_2,j_1,j_2 \in [n]$ (this trivially holds if $i_1 = i_2$ or $j_1 = j_2$).

Similarly, for every distinct $s,t \in [\lfloor n/2 \rfloor]$ we can find two permutations $\alpha,\beta \in S_{\va,\vb}$ such that $f(\alpha) - f(\beta) = d_{\va_{2s-1},\va_{2s},\vb_{2s-1},\vb_{2s}} \pm d_{\va_{2t-1},\va_{2t},\vb_{2t-1},\vb_{2t}}$, for our choice of sign. This shows that $\round(d_{\va_{2t-1},\va_{2t},\vb_{2t-1},\vb_{2t}},\{0,\pm1\}) \neq 0$ for at most one $t \in [\lfloor n/2 \rfloor]$ (here we are using $\epsilon < 1/6$). 
Therefore a random rectangle $\{i_1,i_2\} \times \{j_1,j_2\}$ satisfies $d_{i_1,i_2,j_1,j_2} \neq 0$ with probability at most $1/\lfloor n/2 \rfloor = O(1/n)$.

We can fix values $i_1,j_1$ so that the number of $d_{i,j} := d_{i_1,i,j_1,j}$ which are not $2\epsilon$-close to~$0$ is at most
\[
 (n-1)^2 \Pr_{\substack{i \neq i_1 \\ j \neq j_1}}[d_{i_1,i,j_1,j} \neq 0] = O(n).
\]

As in the proofs of \Cref{lem:L2-sparse,lem:L0-sparse}, we have
\[
 f = \sum_{i=1}^n \sum_{j=1}^n (d_{i,j} - c_{i_1,j_1} + c_{i_1,j} + c_{i,j_1}) x_{i,j} =
 -nc_{i_1,j_1} + \sum_{j=1}^n c_{i_1,j} + \sum_{i=1}^n c_{i,j_1} + \sum_{i=1}^n \sum_{j=1}^n d_{i,j} x_{i,j},
\]
and so for an appropriate $d$,
\[
 f = d + \sum_{i=1}^n \sum_{j=1}^n d_{i,j} x_{i,j}.
\]

\subsection{Sporadic representation} \label{sec:Linf-sporadic}

Next, we prove an analog of \Cref{lem:sporadic}.

\begin{lemma} \label{lem:Linf-sporadic}
Let $f\colon S_n \to \RR$ be a linear function satisfying $\dist(f(\pi), \{0,1\}) \leq \epsilon$ for all $\pi \in S_n$, where $\epsilon < 1/4$ and $n \geq N$ for an appropriate constant $N \in \NN$. We can write
\[
 f = e + \sum_{i=1}^n \sum_{j=1}^n e_{i,j} x_{i,j}
\]
in such a way that each $e_{i,j}$ is $2\epsilon$-close to an integer; at most $O(n)$ of the $e_{i,j}$ are $2\epsilon$-close to non-zero integers; and each row or column of the $n \times n$ matrix formed by the $e_{i,j}$ contains at least $n/4$ entries which are $2\epsilon$-close to~$0$.
\end{lemma}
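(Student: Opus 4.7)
The plan is to adapt the row/column flattening technique from Lemma \ref{lem:sporadic} to the approximate setting. First I would apply Lemma \ref{lem:Linf-sparse} to obtain a representation $f = d + \sum_{i,j} d_{i,j} x_{i,j}$ in which each $d_{i,j}$ is $2\epsilon$-close to $\{0,\pm 1\}$ and at most $Cn$ of the $d_{i,j}$ are $2\epsilon$-close to $\{\pm 1\}$. Since $\epsilon < 1/4$ we have $2\epsilon < 1/2$, so the map $\hat d_{i,j} := \round(d_{i,j},\{0,\pm 1\})$ is well-defined and takes values in $\{-1,0,1\}$, giving a discrete ``shadow'' matrix that agrees with $d_{i,j}$ up to error $2\epsilon$ entrywise.

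Next, for each row $i$ let $\alpha_i \in \{-1,0,1\}$ be a most common value among $\hat d_{i,1},\ldots,\hat d_{i,n}$, and for each column $j$ let $\beta_j \in \{-1,0,1\}$ be a most common value among $\hat d_{1,j},\ldots,\hat d_{n,j}$. Set $e_{i,j} := d_{i,j} - \alpha_i - \beta_j$ and $e := d + \sum_i \alpha_i + \sum_j \beta_j$. Using $\sum_j x_{i,j} = 1$ and $\sum_i x_{i,j} = 1$ on every permutation, one checks that $f = e + \sum_{i,j} e_{i,j} x_{i,j}$. Each $e_{i,j}$ is $2\epsilon$-close to the integer $\hat d_{i,j} - \alpha_i - \beta_j \in \{-3,\ldots,3\}$, because $|e_{i,j} - (\hat d_{i,j} - \alpha_i - \beta_j)| = |d_{i,j} - \hat d_{i,j}| \leq 2\epsilon$.

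For the sparsity count, $e_{i,j}$ is $2\epsilon$-close to a non-zero integer precisely when $\hat d_{i,j} - \alpha_i - \beta_j \neq 0$, which requires at least one of $\hat d_{i,j}, \alpha_i, \beta_j$ to be non-zero. There are at most $Cn$ pairs $(i,j)$ with $\hat d_{i,j} \neq 0$ by construction. If $\alpha_i \neq 0$, then since $\alpha_i$ is a most common value among three options, at least $n/3$ of the $\hat d_{i,j}$ in row $i$ equal $\alpha_i \neq 0$; summing over such rows gives $\#\{i : \alpha_i \neq 0\} \leq 3C$, contributing at most $3Cn$ entries, and symmetrically at most $3Cn$ for columns. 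Together this gives $O(n)$ entries $2\epsilon$-close to non-zero integers.

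For the row/column density, consider any row $i$. By definition of $\alpha_i$, at least $n/3$ columns $j$ satisfy $\hat d_{i,j} = \alpha_i$, so $\hat d_{i,j} - \alpha_i = 0$. Removing the at most $3C$ columns with $\beta_j \neq 0$, at least $n/3 - 3C$ columns satisfy $\hat d_{i,j} - \alpha_i - \beta_j = 0$, i.e.\ $e_{i,j}$ is $2\epsilon$-close to $0$; choosing $N \geq 36C$ ensures this is at least $n/4$, and the column case is symmetric. No step should be a real obstacle; the one place calling for a bit of care is merely verifying that the discrete row/column subtractions do not disturb the $2\epsilon$-closeness, which follows from $\alpha_i,\beta_j \in \mathbb{Z}$.
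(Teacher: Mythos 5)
Your proposal is correct and follows essentially the same approach as the paper's proof: round each $d_{i,j}$ to $\{0,\pm1\}$, subtract off the most common rounded value in each row and column, and count non-zero entries via the sparsity bound from Lemma~\ref{lem:Linf-sparse}. The only differences are in the exact constants (you get $3C$ rows with $\alpha_i\neq 0$ where the paper claims $(3/2)C$, and you accordingly take $N\geq 36C$), which is immaterial to the $O(n)$ and $n/4$ conclusions.
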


The proof uses the first part of the argument in the proof of \Cref{lem:sporadic}. Our starting point is the representation
\[
 f = d + \sum_{i=1}^n \sum_{j=1}^n d_{i,j} x_{i,j}
\]
promised by \Cref{lem:Linf-sparse}, in which at most $Cn$ of the $d_{i,j}$ are $2\epsilon$-close to~$0$.

Let $D_{i,j} = \round(d_{i,j}, \{0,\pm 1\})$,
let $\alpha_i \in \{-1,0,1\}$ be a most common value of $D_{i,1},\ldots,D_{i,n}$, let $\beta_j \in \{-1,0,1\}$ be the most common value of $D_{1,j},\ldots,D_{n,j}$, and define $e_{i,j} = d_{i,j} - \alpha_i - \beta_j$, so that
\[
 g = d + \sum_{i=1}^n \sum_{j=1}^n (e_{i,j} + \alpha_i + \beta_j) x_{i,j} = d + \sum_{i=1}^n \alpha_i + \sum_{j=1}^n \beta_j + \sum_{i=1}^n \sum_{j=1}^n e_{i,j} x_{i,j}.
\]

We claim that most of the coefficients $\alpha_i,\beta_j$ are equal to zero. Indeed, if $\alpha_i \neq 0$ then among $d_{i,1},\ldots,d_{i,n}$, at least $(2/3)n$ are not $2\epsilon$-close to~$0$. Therefore at most $Cn/(2/3)n = (3/2)C$ of the $\alpha_i$ are non-zero. Similarly, at most $(3/2)C$ of the $\beta_j$ are non-zero.
If $E_{i,j} := \round(e_{i,j}, \ZZ) \neq 0$ then at least one of $D_{i,j},\alpha_i,\beta_j$ is non-zero. This shows that the number of non-zero $E_{i,j}$ is at most $Cn + (3/2)Cn + (3/2)Cn = 4Cn$.

Given $i \in [n]$, notice that by construction, $D_{i,1} - \alpha_i, \ldots, D_{i,n} - \alpha_i$ contains at most $(2/3)n$ many non-zero entries. Since at most $(3/2)C$ of the $\beta_j$ are non-zero, we conclude that the $n \times n$ matrix formed by the $E_{i,j}$ contains at most $(2/3)n + (3/2)C$ non-zero entries on row $i$, which is at most $(3/4)n$ for an appropriate $N$. A similar property holds for columns.

\subsection{Main theorem} \label{sec:Linf-main}

We are now ready to uncover the structure of $f$.

\begin{lemma} \label{lem:Linf-sum-of-cosets}
Let $f\colon S_n \to \RR$ be a linear function satisfying $\dist(f(\pi), \{0,1\}) \leq \epsilon$ for all $\pi \in S_n$, where $\epsilon < 1/40$ and $n \geq N$ for an appropriate constant $N \in \NN$. Then $g(\pi) = \round(f(\pi), \{0,1\})$ is a dictator.
\end{lemma}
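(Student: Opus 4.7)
The plan is to extract a dictator structure from the representation provided by \Cref{lem:Linf-sporadic} via local arguments. Writing $f = e + \sum_{i,j} e_{i,j} x_{i,j}$ as in that lemma, set $E_{i,j} = \round(e_{i,j}, \ZZ)$, $R = \{(i,j) : E_{i,j} \neq 0\}$, and let Avoid be the event $\{\pi : E_{i,\pi(i)} = 0 \text{ for all } i\}$. The row/column bound of at most $3n/4$ non-near-zero entries per line makes the hitting argument of \Cref{lem:sporadic} go through for $n$ sufficiently large, giving $\Pr[\text{Avoid}] \geq \gamma$ for some absolute constant $\gamma > 0$.

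The next step is to pin down the values $E_{i,j}$ using single swaps. For any $(i^*, j^*) \in R$, the row/column property combined with \Cref{lem:sporadic-relative} produces a $\pi_0 \in \text{Avoid}$ and a neighbor $\pi_1 = \pi_0 \cdot (i^* \, i')$ such that $\pi_1$ hits exactly $(i^*, j^*)$ within $R$. Only four $e$-coefficients then contribute to $f(\pi_1) - f(\pi_0)$, and three of them are $2\epsilon$-close to $0$, so
\[
 f(\pi_1) - f(\pi_0) = E_{i^*, j^*} + \xi, \qquad |\xi| \leq 8\epsilon.
\]
Since $f(\pi_0), f(\pi_1) \in \{0,1\} \pm \epsilon$ forces their difference into $\{-1, 0, +1\} \pm 2\epsilon$, and $\epsilon < 1/40$, this pins $E_{i^*, j^*} \in \{-1, +1\}$ and $g(\pi_1) - g(\pi_0) = E_{i^*, j^*}$. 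Varying $(i^*, j^*)$ across $R$ and comparing the resulting forced values of $g$ on the various associated Avoid permutations shows that all $E_{i^*, j^*}$ share a common sign and that $g$ is constant on Avoid; replacing $f$ by $1-f$ if necessary, we may assume $g \equiv 0$ on Avoid and $E_{i,j} \in \{0, 1\}$. Applying the same kind of argument with two successive swaps at a pair of disjoint $(i_1, j_1), (i_2, j_2) \in R$ produces $f(\pi_2) - f(\pi_0) = 2 + O(\epsilon)$, contradicting the fact that this difference must lie in $\{-1, 0, +1\} \pm O(\epsilon)$. Hence the cosets in $R$ are pairwise non-disjoint, so all lie on a single row or column $L$, and the natural prediction is $g(\pi) = [\pi \text{ meets } L]$, a dictator.

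The remaining step, which is the main technical obstacle, is to verify $g(\pi) = [\pi \text{ meets } L]$ for \emph{every} $\pi \in S_n$, not just those obtained from an Avoid permutation by $O(1)$ swaps. The naive decomposition $f(\pi) = e + E_{r, \pi(r)} + \sum_{i \neq r} \delta_{i, \pi(i)}$ carries an error of order $\epsilon n$ in the $\delta$-sum, which is too large to pin down $g$ directly from the values of $e$ and the $E_{i,j}$. One resolves this by propagating the dictator prediction along a sequence of single swaps connecting $\pi$ back to Avoid; each swap changes $f$ by at most $8\epsilon$ plus at most one unit contributed by entering or leaving $L$, and the inductive invariant that $g$ matches the dictator prediction is preserved throughout since $g$ is integer-valued and the residual error per step stays below $1/2$.
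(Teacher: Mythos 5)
Your proposal reaches the correct conclusion and the underlying ideas are sound, but you take a noticeably heavier route than the paper at two points, and the heaviness obscures that one of your ``obstacles'' is not really an obstacle.

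First, you invoke the probabilistic hitting machinery of \Cref{lem:sporadic} and \Cref{lem:sporadic-relative} to produce an Avoid event of constant probability, then sample permutations from it. In the $L_\infty$ setting none of this is needed, because the hypothesis controls \emph{every} permutation, not a typical one. The paper works directly from the two deterministic facts delivered by \Cref{lem:Linf-sporadic}: at most $O(n)$ of the $E_{i,j}$ are non-zero, and each row and column has at least $n/4$ entries with $E_{i,j}=0$. To pin down a single $E_{i_1,j_1}$, one simply counts: there are at least $n/4$ choices of $i_2$ with $E_{i_2,j_1}=0$ and at least $n/4$ choices of $j_2$ with $E_{i_1,j_2}=0$, and among these $(n/4)^2$ pairs only $O(n)$ can have $E_{i_2,j_2}\neq 0$, so for $n$ large one finds a square with the other three corners vanishing. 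The paper then uses the same counting on two squares simultaneously to rule out two non-zero $E$-entries off a common line. This avoids re-proving the hitting estimates in a setting where they are overkill, and also avoids the loose ends in your sketch (you assert the existence of $\pi_0\in\text{Avoid}$ and a \emph{neighbouring} $\pi_1$ hitting exactly $(i^*,j^*)$, which is a stronger statement than \Cref{lem:sporadic-relative} gives directly).

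Second, and more importantly, you flag ``verify $g(\pi)=[\pi \text{ meets } L]$ for every $\pi$'' as the main technical obstacle and resolve it by propagating a dictator prediction along swaps. This is more than is needed. The lemma only asks that $g$ be a dictator, i.e., that $g$ depend only on $\pi(I)$ (where $I$ is the single row carrying the non-zero $E$'s); the exact Boolean function of $\pi(I)$ is irrelevant. The paper's final step is a one-liner: any two permutations agreeing on $\pi(I)$ are connected by transpositions avoiding row $I$, and each such transposition changes $f$ by $e_{i_1,i_2,j_1,j_2}$ with all four $E$'s zero, hence by at most $8\epsilon < 1-2\epsilon$, so $g$ is unchanged. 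There is no prediction to track and no invariant to preserve; the ``order $\epsilon n$ error in the $\delta$-sum'' never enters, since one always reasons with differences. Your propagation argument can be made to work, but it re-derives a weaker statement (constancy of $g$ on the fibres of $\pi\mapsto\pi(I)$) by a longer route.

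One small terminological slip: you say two cosets in $R$ are ``disjoint'' when you mean that they can be hit simultaneously, i.e., $i_1\neq i_2$ and $j_1\neq j_2$; in the paper's convention this is precisely the \emph{non}-disjoint case. The mathematics is right, but the wording reverses the paper's definition.
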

\begin{proof}
Apply \Cref{lem:Linf-sporadic} to get a representation
\[
 f = e + \sum_{i=1}^n \sum_{j=1}^n e_{i,j} x_{i,j}.
\]
with the properties stated by the lemma, and let $E_{i,j} = \round(e_{i,j}, \ZZ)$. According to the lemma, $|e_{i,j} - E_{i,j}| \leq 2\epsilon$, and at most $O(n)$ of the $E_{i,j}$ are non-zero.

For a rectangle $\{i_1,i_2\} \times \{j_1,j_2\}$, let $e_{i_1,i_2,j_1,j_2} = e_{i_1,j_1} + e_{i_2,j_2} - e_{i_1,j_2} - e_{i_2,j_1}$, and let $E_{i_1,i_2,j_1,j_2} = E_{i_1,j_1} + E_{i_2,j_2} - E_{i_1,j_2} - E_{i_2,j_1}$. We can find two permutations $\alpha,\beta$ such that $f(\alpha) - f(\beta) = e_{i_1,i_2,j_1,j_2}$, and so $\dist(e_{i_1,i_2,j_1,j_2}, \{0,\pm 1\}) \leq 2\epsilon$. Since $|e_{i_1,i_2,j_1,j_2} - E_{i_1,i_2,j_1,j_2}| \leq 8\epsilon$, this shows that $\dist(E_{i_1,i_2,j_1,j_2}, \{0,\pm 1\}) \leq 10\epsilon$, and so $E_{i_1,i_2,j_1,j_2} \in \{0,\pm 1\}$.

If $\{i_1,i_2\} \times \{j_1,j_2\}$ and $\{i_3,i_4\} \times \{j_3,j_4\}$ are two compatible squares, then we can find two permutations $\alpha,\beta$ such that $f(\alpha) - f(\beta) = e_{i_1,i_2,j_1,j_2} \pm e_{i_3,i_4,j_3,j_4}$, for a sign of our choice. This implies that $\dist(E_{i_1,i_2,j_1,j_2} \pm E_{i_3,i_4,j_3,j_4}, \{0,\pm 1\}) \leq 18\epsilon$, and so $E_{i_1,i_2,j_1,j_2},E_{i_3,i_4,j_3,j_4}$ cannot both be non-zero.

\smallskip

Suppose that $E_{i_1,j_1} \neq 0$. There are at least $n/4$ many $i_2$ such that $E_{i_2,j_1} = 0$, and at least $n/4$ many $j_2$ such that $E_{i_2,j_2} = 0$. Since at most $O(n)$ of the $E_{i,j}$ are non-zero, for large enough $N$ there must be a choice of $i_2$ and $j_2$ out of these $(n/4)^2$ options such that furthermore $E_{i_2,j_2} = 0$, and so $E_{i_1,i_2,j_1,j_2} = E_{i_1,j_1}$. This shows that $E_{i,j} \in \{0,\pm 1\}$ for all $i,j \in [n]$.

We now run the same argument on pairs. Suppose that $E_{i_1,j_1},E_{i_3,j_3} \neq 0$, where $i_1 \neq i_3$ and $j_1 \neq j_3$. There are $\Omega(n^2)$ many choices for $i_2,i_4$ such that $i_1,i_2,i_3,i_4$ are all distinct and $E_{i_2,j_1} = E_{i_4,j_3} = 0$ (assuming $N \geq 4$). Similarly, there are $\Omega(n^2)$ many choices for $j_2,j_4$ such that $j_1,j_2,j_3,j_4$ are all distinct and $E_{i_1,j_2} = E_{i_3,j_4} = 0$. Since at most $O(n)$ of the $E_{i,j}$ are non-zero, at most $O(n^2)$ out of these $\Omega(n^4)$ choices satisfy $E_{i_2,j_2} \neq 0$ or $E_{i_4,j_4} \neq 0$, and so for large enough $N$, there must be some choice of $i_2,i_4,j_2,j_4$ such that the squares $\{i_1,i_2\} \times \{j_1,j_2\}$ and $\{i_3,i_4\} \times \{j_3,j_4\}$ are compatible, $E_{i_1,i_2,j_1,j_2} = E_{i_1,j_1}$, and $E_{i_3,i_4,j_3,j_4} = E_{i_3,j_3}$. This shows that all non-zero $E_{i,j}$ must line on the same row or column.

Suppose that all non-zero $E_{i,j}$ lie on row $I$ (the other case is similar). If $E_{I,j_1} = 1$ and $E_{I,j_2} = -1$ then for any $i \in I$ we have $E_{I,i,j_1,j_2} = 2$, which impossible. By possibly replacing $f$ with $1-f$, we can assume that $E_{I,j} \in \{0,1\}$ for all $j \in [n]$.

\smallskip

Recall that $g(\pi) = \round(f(\pi), \{0,1\})$.
If $\alpha,\beta \in S_n$ are two permutations differing on a transposition and satisfying $\alpha(I) = \beta(I)$ then $f(\alpha) - f(\beta) = e_{i_1,i_2,j_1,j_2}$ for some $i_1,i_2 \neq I$. Since $E_{i_1,i_2,j_1,j_2} = 0$, this implies that $|g(\alpha) - g(\beta)| \leq 10\epsilon$, and so $g(\alpha) = g(\beta)$. 

Assuming $N \geq 3$, any two permutations agreeing on the image of $I$ can be obtained from one another by applying a sequence of transpositions. Therefore $g(\pi)$ depends only on $\pi(I)$, and so it is a dictator.
\end{proof}

To deduce \Cref{thm:main-4}, it remains to handle the case in which $n$ is small.

\mainfour*

\begin{proof}
Let $N$ be the constant from \Cref{lem:Linf-sum-of-cosets}. If $n \geq N$ then we are done, so assume that $n < N$. Apply \Cref{lem:Linf-sparse} to obtain a representation
\[
 f = d + \sum_{i=1}^n \sum_{j=1}^n d_{i,j} x_{i,j}
\]
such that $\dist(d_{i,j}, \{0,\pm1\}) \leq 2\epsilon$ for all $i,j$. Denoting by $\id$ the identity permutation, we have
\[
 f(\id) = d + \sum_{i=1}^n d_{i,i},
\]
and so $f(\id)$ is $(2n+1)\epsilon$-close to an integer.

Let $D = \round(d,\ZZ)$ and $D_{i,j} = \round(d_{i,j}, \{0,\pm 1\})$. Then the function
\[
 h = D + \sum_{i=1}^n D_{i,j} x_{i,j}
\]
satisfies $|f(\pi) - h(\pi)| \leq (4n+1)\epsilon$ for all $\pi \in S_n$, and so $\dist(h(\pi),\{0,1\}) \leq (4n+2)\epsilon$ for all $\pi \in S_n$. Choosing $\epsilon_0 < 1/(4N+2)$, this guarantees that $h$ is Boolean, and so a dictator by \Cref{thm:EFP}. Choosing $\epsilon_0 < 1/(8N+2)$, we see that $h(\pi) = \round(f(\pi), \{0,1\}) = g(\pi)$ for all $\pi \in S_n$, completing the proof.
\end{proof}

\bibliographystyle{amsplain-url}
\bibliography{filmusbiblio}

\begin{dajauthors}
\begin{authorinfo}[yuval]
  Yuval Filmus\\
  The Henry and Marylin Taub Faculty of Computer Science\\
  Technion --- Israel Institute of Technology\\
  Haifa, Israel\\
  \href{mailto:yuvalfi@cs.technion.ac.il}{yuvalfi@cs.technion.ac.il} \\
  \url{https://yuvalfilmus.cs.technion.ac.il}
\end{authorinfo}
\end{dajauthors}
	
\end{document}